\newtheorem{theorem}{Theorem}[section]
\newtheorem{lemma}[theorem]{Lemma}
\newtheorem{corollary}[theorem]{Corollary}
\newtheorem{proposition}[theorem]{Proposition}
\newtheorem{remark}[theorem]{Remark}
\newtheorem{definition}[theorem]{Definition}
\numberwithin{equation}{section} %%
\newcommand{\RN}{\mathbb R^N}
\newcommand{\s}{\section}
\newcommand{\R}{\mathbb R}
\newcommand{\bt}{\begin{theorem}}
\newcommand{\et}{\end{theorem}}
\newcommand{\bl}{\begin{lemma}}
\newcommand{\el}{\end{lemma}}
\newcommand{\bd}{\begin{definition}}
\newcommand{\ed}{\end{definition}}
\newcommand{\bc}{\begin{corollary}}
\newcommand{\ec}{\end{corollary}}
\newcommand{\bp}{\begin{proof}}
\newcommand{\ep}{\end{proof}}
\newcommand{\bx}{\begin{example}}
\newcommand{\ex}{\end{example}}
\newcommand{\bi}{\begin{exercise}}
\newcommand{\ei}{\end{exercise}}
\newcommand{\bo}{\begin{prop}}
\newcommand{\eo}{\end{prop}}
\newcommand{\br}{\begin{remark}}
\newcommand{\er}{\end{remark}}
\newcommand{\be}{\begin{equation}}
\newcommand{\ee}{\end{equation}}
\newcommand{\ba}{\begin{align}}
\newcommand{\ea}{\end{align}}
\newcommand{\bn}{\begin{enumerate}}
\newcommand{\en}{\end{enumerate}}
\newcommand{\bg}{\begin{align*}}
\newcommand{\bcs}{\begin{cases}}
\newcommand{\ecs}{\end{cases}}
\newcommand{\bean}{\begin{eqnarray*}}
\newcommand{\eean}{\end{eqnarray*}}
\begin{document}

\title[{Normalized solutions for a fractional Schr\"{o}dinger-Poisson system} ]{Normalized solutions for  a fractional \\  Schr\"{o}dinger-Poisson system with critical growth}

\author[X. He]{Xiaoming He}
  \address[X. He]{College of Science
     \newline \indent
    Minzu University of China
    \newline \indent
    Beijing 100081, China}
\email{xmhe923@muc.edu.cn}

 \author[Y. Meng]{Yuxi Meng}
 \address[Y. Meng]{School of Mathematics and Statistics
     \newline \indent
   Beijing Institute of Technology
   \newline \indent
  Beijing 100081, China}
 \email{yxmeng@bit.edu.cn}

 \author[M. Squassina]{Marco Squassina}
 \address[M. Squassina]{Universit\`{a} Cattolica del Sacro Cuore
     \newline \indent
    Dipartimento di Matematica e Fisica
     \newline \indent Via della Garzetta 48, 25133, Brescia, Italy}
 \email{marco.squassina@unicatt.it}

%\date{\today}
%%    \thanks will become a 1st page (unnumbered) footnote.
%%    Multiple \thanks are possible. You may use it for the
%%    indication of the corresponding author.

\subjclass[2010]{35J62, 35J50, 35B65}
\keywords{Fractional Schr\"{o}dinger-Poisson systems, normalized solutions,
critical exponent}

%\date{\today}

\thanks{ This work is  supported by the National Natural Science
    Foundation of China  (121714971, 11771468, 11971027).
    Marco  Squassina  is  member  of  Gruppo  Nazionale  per
    l'Analisi  Matematica,  la Probabilita  e  le  loro  Applicazioni  (GNAMPA)  of  the  Istituto  Nazionale  di  Alta  Matematica  (INdAM)}

\begin{abstract}
In this paper, we study the fractional critical
Schr\"{o}dinger-Poisson system   \[\begin{cases} (-\Delta)^su
+\lambda\phi
u= \alpha u+\mu|u|^{q-2}u+|u|^{2^*_s-2}u,&~~ \mbox{in}~\R^3,\\
(-\Delta)^t\phi=u^2,&~~ \mbox{in}~\R^3,\end{cases}
\]  having
prescribed mass
\[\int_{\R^3} |u|^2dx=a^2,\] where   $ s, t \in (0, 1)$ satisfies $2s+2t >
3, q\in(2,2^*_s), a>0$ and $\lambda,\mu>0$   parameters and
$\alpha\in\R$ is an undetermined parameter.  Under   the
$L^2$-subcritical perturbation $q\in (2, 2+\frac{4s}{3})$, we
derive the existence of  multiple normalized solutions
  by means of the truncation technique, concentration-compactness principle and the genus
  theory. For the $L^2$-supercritical perturbation $q\in (2+\frac{4s}{3}, 2^*_s)$,
by applying the constrain variational methods and the mountain pass
theorem, we show the existence of positive normalized ground state
solutions.
\end{abstract}

%%%%%%%%%%%55\thanks{X.\ He is  supported by NSFC    (11771468, 11271386)}

\maketitle

\begin{center}
    \begin{minipage}{8cm}
        \small
        \tableofcontents
    \end{minipage}
\end{center}

\smallskip
\smallskip

 \section{Introduction}

%%%%%%

\par In the last decade,  the following  time-dependent
 fractional Schr\"{o}dinger-Poisson system
 \be\label{e1.1}
\begin{cases}\displaystyle
 i\frac{\partial\Psi}{\partial \tau}=(-\Delta)^s\Psi+\lambda\phi\Psi-f(x,|\Psi|),& x\in \R^3, \\
(-\Delta )^{t}\phi=|\Psi|^2, &x\in\R^3,\end{cases} \ee  has
attracted much attention, where $\Psi:
\R\times\R^3\rightarrow\mathbb{C}, s,t\in(0,1), \lambda\in\R.$  It
is well-known that, the first equation in \eqref{e1.1} was used by
Laskin (see \cite{Lask1,Lask2}) to extend the Feynman path integral,
from Brownian-like to L\'{e}vy-like quantum mechanical paths.  This
class of fractional Schr\"{o}dinger equations with a repulsive
nonlocal Coulombic potential can be approximated by the Hartree-Fock
equations to describe a
 quantum mechanical system of many particles; see, for example, \cite{CHKL,LL,Longhi}, and \cite{MRS,NPV} for more applied backgrounds on the fractional Laplacian.

\par When we look for standing wave solutions to  \eqref{e1.1}, namely to
solutions of the form $(\Psi(\tau, x) =e^{-i\alpha
\tau}u(x),\phi(x)), \alpha\in\R,$ then the function $(u(x),\phi(x))$
solves the equation
 \be\label{e1.2}
\begin{cases}\displaystyle
 (-\Delta)^su+\lambda\phi u=\alpha u+f(x,u),  &x \in \R^{3},\\
                (-\Delta )^t\phi=u^2, &x \in \R^{3}.\end{cases} \ee
Here $(-\Delta)^s$ is a nonlocal operator defined by
$$(-\Delta)^s u(x)= C_{s}~ \mbox{P.V.} \int_{ \mathbb{R}^{3}}\frac{u(x)-u(y)}{|x-y|^{3+2s}}dy,~~~x\in\R^3,~~ s\in(0,1), $$
and P.V. stands for the Cauchy principal value on the integral, and
$C_s$ is a suitable normalization constant.
\par

We note that, when $\alpha\in\R$ is a fixed real number, there was a
lot of attention in recent years on the system \eqref{e1.2}
 for the existence and multiplicity of ground state solutions, bound
state solutions and concentrating  solutions, see for examples
 \cite{Wu,YZZ1,YYZ,ZDS} and  references therein. Especially,
 Zhang, do \'{O} and Squassina  \cite{ZDS} considered the existence
and asymptotical behaviors of positive solutions  as
$\lambda\rightarrow 0^+$, for the fractional Schr\"{o}dinger-Poisson
system
$$
\begin{cases} (-\Delta)^s
u+\lambda \phi  u=g(u),&x\in\R^3, \vspace{0.1cm}\\
(-\Delta)^t \phi=\lambda u^2,&x\in\R^3,
\end{cases}
$$
where $\lambda>0$ and $g$ may be subcritical or critical growth
satisfying the Berestycki-Lions conditions. In \cite{Teng},  Teng
studied the existence of a nontrivial ground state solution for  the
nonlinear fractional Schr\"{o}dinger-Poisson system with critical
Sobolev exponent
$$
\begin{cases} (-\Delta)^s
u+V(x)u+\phi  u=\mu|u|^{q-1}u+|u|^{2^*_s-2}u,&x\in\R^3, \vspace{0.1cm}\\
(-\Delta)^t \phi= u^2,&x\in\R^3,
\end{cases}
$$
where $\mu\in\R^+$ is a parameter, $1<q<2^*_s-1,~s,t\in(0,1)$ with
$2s+2t>3.$ The potential $V$ satisfies some suitable hypotheses. By
the monotonicity trick, concentration-compactness principe and a global
compactness Lemma, the author establishes the existence of ground
state solutions. Formally, system \eqref{e1.1} with $s=t=1$ can be
regarded as the following classical  Schr\"{o}dinger-Poisson system
\[\begin{cases} -\Delta u
+\lambda\phi
u= f(x,u),&~~ \mbox{in}~\R^3,\\
-\Delta\phi=u^2,&~~ \mbox{in}~\R^3,\end{cases}
\]which appears in semiconductor theory \cite{MRS} and also describes the interaction
of a charged particle with the electrostatic field in quantum
mechanics.  The literature on the Schr\"{o}dinger-Poisson system in
presence of a pure power nonlinearity is very rich, we refer to
\cite{Wu,YYZ,YZZ2} and references therein.

\par Alternatively,   from a physical point of view, it is interesting to find
solutions of  \eqref{e1.2}  with prescribed $L^2$-norms, $\alpha$
appearing as Lagrange multiplier. Solutions of this type are often
referred to as normalized solutions.  The occurrence of the
$L^2$-constraint renders several methods developed to deal with
variational problems without constraints useless, and the
 $L^2$-constraint induces a new critical exponent, the $L^2$-critical
exponent given by
 \[\bar{q} := 2 + \frac{4s}{3},\]
  and the number
$\bar{q}$ can keep the mass invariant by the law of conservation of
mass. Precisely for this reason, $2 + \frac{4s}{3}$ is called
 $L^2$-critical exponent or mass critical exponent, which is the
threshold exponent for many dynamical properties such as global
existence, blow-up, stability or instability of ground states. In
particular, it strongly influences the geometrical structure of the
corresponding functional. Meanwhile, the appearance of the
$L^2$-constraint makes some classical methods, used to prove the
boundedness of any Palais-Smale sequence for the unconstrained
problem, difficult to implement.    In \cite{LT}, Li and  Teng
proved the existence of normalized solutions to the following
fractional Schr\"{o}dinger-Poisson system:

\be\label{e1.3} \begin{cases} (-\Delta)^su +\phi u= \lambda u+f(u),&~~ \mbox{in}~\R^3, \\
(-\Delta)^t\phi=u^2, &~~ \mbox{in}~\R^3,\\
\displaystyle\int_{\R^N}|u|^2dx=a^2,&
\end{cases} \ee
  where $s \in (0,1),
2s+2t>3, \lambda\in\R $ and $f\in C^1(\R,\R)$ satisfies some general
conditions which contain the case $f(u)\sim |u|^{q-2}u$ with
$q\in(\frac{4s+2t}{s+t},2+\frac{4s}{3})\cup (2+\frac{4s}{3},2^*_s)$,
i.e., the nonlinearity $f$ is $L^2$-mass subcritical or $L^2$-mass
supercritical growth, but is Sobolev subcritical growth. In
\cite{YZZ1},  Yang,    Zhao,  and  Zhao     showed    the existence
of infinitely many solutions $(u, \lambda)$  to \eqref{e1.3} with
 subcritical nonlinearity $\mu|u|^{q-2}u$, by using the cohomological
index theory.

\par We note that, when  $ s = t=1,$   problem   \eqref{e1.3}, are related  to the the following  equation

\be\label{e1.4} \begin{cases} -\Delta u +\lambda u-\gamma (|x|^{-1}\ast|u|^2) u= a|u|^{p-2}u,&~~ \mbox{in}~\R^3,\\
\displaystyle\int_{\R^N}|u|^2dx=c^2,~~u\in H^1(\R^3).& \end{cases} \ee  Recently,
 Jeanjean and Trung Le in \cite{JLe2} studied the existence of
normalized solutions for \eqref{e1.4} when $\gamma>0$ and $a>0$,
both in the Sobolev subcritical case $p\in (10/3, 6)$ and in the
Sobolev critical case $p=6,$ they  showed that there exists a
$c_1>0$ such that, for any $c\in(0,
 c_1),$ \eqref{e1.4} admits two solutions $u^+_c$ and $u^-_c$ which can be characterized
respectively as a local minima and as a mountain pass critical point
of the associated energy functional restricted to the norm
constraint. While in the case $\gamma<0, a>0$ and $p=6$ the authors
showed that \eqref{e1.4} does not admit positive solutions.
Bellazzini, Jeanjean and Luo \cite{BJL}  proved  that for $c > 0$
sufficiently small, there exists a critical point which minimizes
with prescribed $L^2$-norms. In \cite{JL}, Jeanjean and Luo studied
the existence  of minimizers for with $L^2$-norm for \eqref{e1.4},
and they
   expressed  a threshold value of $c> 0$ separating existence and
nonexistence of minimizers. In \cite{WQ}, Wang and Qian established
the existence of ground state and infinitely many radial solutions
to \eqref{e1.4} with $a|u|^{p-2}u$ replaced by a general subcritical
nonlinearity $af(u)$, by constructing a particular bounded
Palais-Smale sequence when $\gamma< 0, a > 0.$   In \cite{LZ}, Li
and Zhang  studied  the existence of positive normalized ground
state solutions for a class of   Schr\"{o}dinger-Popp-Podolsky
system. For more results  on  the existence and no-existence of
normalized solutions  of Schr\"{o}dinger-Poisson systems,  we refer
to \cite{BJS,BS,BS1,BS2,HLW,JL,JLe2,Luo,Y,YZZ1} and references
therein.

 \par   After the above bibliography review we have found
only two papers \cite{LT,YZZ1} considering the normalized solutions
for the fractional Schr\"{o}dinger-Poisson system by the prescribed
mass approaches with the nonlinearity $f(u)$, being Sobolev
subcritical growth.

A natural question arises: How to obtain solutions to system
\eqref{e1.3} in presence of the nonlinear term
$f(u)=\mu|u|^{q-2}u+|u|^{2^*_s-2}u$, combining the Sobolev critical
term with a subcritical perturbation?

 The main contribution of this paper is to give an affirmative answer to this question and  fill this
 gap. To be specific, in the present paper  we aim to study the following   fractional Schr\"{o}dinger-Poisson
system
 \be\label{e1.5} \begin{cases} (-\Delta)^su +\lambda\phi u= \alpha u+\mu|u|^{q-2}u+|u|^{2^*_s-2}u,&~~ \mbox{in}~\R^3, \\
(-\Delta)^t\phi=u^2,&~~ \mbox{in}~\R^3, \end{cases} \ee  having  prescribed
 $L^2$-norm \be\label{e1.6} \int_{\R^3} |u|^2dx=a^2, \ee where   $s,
t \in (0, 1)$ satisfies $2s+2t > 3, q\in(2,2^*_s)$  and
$\alpha\in\R$ is an undetermined parameter, $\mu,\lambda>0$ are
parameters.  For this purpose, applying the reduction argument
introduced in \cite{ZDS},   system
 \eqref{e1.5} is equivalent to the following single equation

\be\label{e1.7}  (-\Delta)^su +\lambda\phi^t_u u= \alpha
u+\mu|u|^{q-2}u+|u|^{2^*_s-2}u,~~~x\in\R^3,\ee where
\[\phi^t_u(x)=c_t\int_{\R^3}\frac{|u(y)|^2}{|x-y|^{3-2t}}dy,~~~\mbox{and}~~~c_t:=\frac{\Gamma(\frac{3}{2}-2t)}{\pi^32^{2t}\Gamma(t)}.\]
We shall look for solutions to \eqref{e1.5}-\eqref{e1.6},  as a
critical points   of the action functional
\[
I_{\mu}(u)=\frac{1}{2}\int_{\R^3}|(-\Delta )^\frac{s}{2}
u|^2dx+\frac{\lambda}{4}
\int_{\R^3}\phi^t_u|u|^2dx-\frac{\mu}{q}\int_{\R^3}|u|^qdx
-\frac{1}{2^*_s}\int_{\R^3}|u|^{2^*_s}dx,
\]
restricted on the
set
\[S_a=\left\{u\in H^s(\R^3):~\int_{\R^3}|u|^2dx=a^2\right\},
\]
with $\alpha$ being the Lagrange multipliers, Clearly,   each
critical point $u_a\in  S_a$ of $I_{\mu}|_{S_a}$, corresponds a
Lagrange multiplier $\alpha\in\R$ such that $(u_a, \alpha)$ solves
\eqref{e1.7}. In particular, if $u_a\in  S_a$ is a minimizer of
problem

$$
m(a):=\inf_{u\in S_a}I_{\mu}(u),
$$
then there exists $\alpha\in\R$ as a Lagrange multiplier and then
$(u_a, \alpha)$ is a weak solution of  \eqref{e1.7}. As far as we
know, there is no result about the existence of normalized solutions
for Schr\"{o}dinger-Poisson system with a critical term in the
current literature. For this aim, we shall focus our attention on
the existence, asymptotic and multiplicity of normalized solutions
for problem \eqref{e1.5}- \eqref{e1.6}.

\vskip0.2in

 \section{The main results}

\par In this section we formulate the main results. We first deal
with the existence of
 multiple normalized ground state solutions in the $L^2$-subcritical case: $q\in
(2,2+\frac{4s}{3})$. Secondly, we are  concerned with the existence
and asymptotic behavior of positive normalized ground state
solutions of Schr\"{o}dinger-Poisson  system \eqref{e1.7} in the
$L^2$-supercritical case: $q\in (2+\frac{4s}{3}, 2^*_s)$.

To state the main results, for $\delta_{q,s}=3(q-2)/2qs$, we introduce the following constants:
\be\label{e2.1}
  D_1:=2^{-\frac{q\delta_{q,s}-2}{2^*_s-2}}S^{\frac{3(2^*_s-q)}{2s(2^*_s-2)}};
  \ee
\be\label{e2.2}
 D_2:=D(s,t)^{-1}S^{\frac{3[(2^*_s-2)-q(1-\delta_{q,s})]}{2s(2^*_s-2)}},
\ee where \be\label{e2.3}D(s,t):=\left(\frac{(3-2t)\lambda
 \Gamma_t}{2s}\right)^{\frac{(q\delta_{q,s}-2)s}{s2^*_{s}+2t-3}},\ee
 and $\Gamma_t$ is given in \eqref{e3.3}.

\par The  first result is concerned with the multiplicity of normalized solutions for the $L^2$-subcritical perturbation, which   can be
formulated as

\bt\label{Theorem 2.1} Let $\mu,\lambda, a>0$, and   $q\in
(2,2+\frac{4s}{3})$. Then, for a given $k\in \mathbb{N}$, there
exists $\beta>0$ independent of $k$ and $\mu^*_k>0$ large, such that
problem \eqref{e1.5}-\eqref{e1.6} possesses at least $k$ couples
$(u_j,\alpha_j)\in H^s(\R^3)\times \R$ of weak solutions for
$\mu>\mu_k$ and \be\label{e2.4}a\in
\left(0,\left(\frac{\beta}{\mu}\right)^\frac{1}{q(1-\delta_{q,s})}\right)
\ee with $\int_{\R^3}|u_j|^2dx=a^2$, $\alpha_j<0$ for all
$j=1,\cdots,k$.\et

\par The second result of this paper is concerned with the existence and asymptotical behavior of normalized
solutions for the $L^2$-supercritical perturbation when the parameters
$\lambda,\mu>0$ are suitably small.

\smallskip
\bt\label{Theorem 2.2} Let $q\in (2+\frac{4s}{3}, 2^*_s)$, assume
that $\mu,a>0$ satisfy  the following inequality
 \be\label{e2.5}
 \mu {\delta_{q,s}}\max\left\{a^{q(1-\delta_{q,s})},a^{\frac{(q-2)2t+2s(2^*_s-4)}{s2^*_{s}+2t-3}}\right\}
  <\min\{D_1, D_2\},
  \ee
where $\delta_{q,s}=3(q-2)/2qs.$ Then, there exists $\Lambda^*>0$
such that for $0<\lambda<\Lambda^*$, problem
\eqref{e1.5}-\eqref{e1.6} possesses a positive normalized ground
state solution $u_{\alpha}\in H^s(\R^3)$ for some $\alpha<0$. \et

\par Finally, we present an existence result of normalized
solutions under the $L^2$-supercritical perturbation, when
parameter $\mu>0$ is large.

\vskip0.1in

\bt\label{Theorem 2.3} If $2+\frac{4s}{3}<q<2^*_s $, there exists
$\mu^\ast=\mu^\ast(a)>0$ large,  such that as $\mu>\mu^\ast$,
problem
 \eqref{e1.5}-\eqref{e1.6} possesses a couple $(u_a,\alpha)\in H^{s}(\R^3) \times \R$
of weak solutions with $\int_{\R^3}|u_a|^2dx=a^2$, $\alpha<0$. \et
\vskip0.1in

\par \noindent {\bf Remark 2.1.} (i) { Theorems \ref{Theorem 2.1}-\ref{Theorem 2.3}
improve and complement the main results in \cite{Teng,ZDS} in the
sense that, we are concerned with the normalized solutions.

\par  (ii) Our studies
improve and fill in gaps of the main works of \cite{LT,ST,YZZ1},
since we consider the existence of normalized solutions to
\eqref{e1.5}-\eqref{e1.6} with Sobolev critical growth. }

 \vskip0.1in

\subsection{Remarks on the proofs}
We give some comments on the proof for the main results
above. Since the   critical terms $|u|^{2^*_s-2}u$  is
  $L^2$-supercritical, the functional $I_{\mu}$ is always unbounded from below
on $S_a,$  and this makes it  difficulty to deal  with existence of
normalized solutions  on the $L^2$- constraint. One of the main
difficulties that one has to face in such context is the analysis of
the convergence of constrained Palais-Smale sequences: In fact, the
critical growth term in the equation makes the bounded (PS)
sequences possibly not convergent;  moreover,  the   Sobolev critical
term $|u|^{2^*_s-2}u$ and      nonlocal   convolution term
$\lambda\phi^t_u u$, makes  it  more complicated  to estimate the
critical value of mountain pass, and one has to consider how the
interaction between the nonlocal term and the nonlinear term, and
  the energy balance between these competing terms needs to be
controlled through moderate adjustments of  parameter $\lambda>0.$
  Another of   difficulty  is that sequences of approximated Lagrange
multipliers have to be controlled, since $\alpha$ is not prescribed;
and moreover, weak limits of Palais-Smale sequences could leave the
constraint, since the embeddings $H^s(\R^3)\hookrightarrow
L^2(\R^3)$ and also $H^s_{{\rm rad}}(\R^3)\hookrightarrow L^2(\R^3)$ are
not compact.

\par To overcome these difficulties, we employ
Jeanjean's  theory \cite{J} by  showing that the mountain pass
geometry of $I_{\mu}|_{S_a}$ allows to construct a Palais-Smale
sequence of functions satisfying the Pohozaev identity. This gives
boundedness, which is the first step in proving strong
$H^s$-convergence.  As naturally expected, the presence of the
Sobolev critical term   in \eqref{e1.5} further complicates the
study of the convergence of Palais-Smale sequences. To overcome the
loss of compactness caused by the
 critical growth, we shall employ the concentration-compactness principle, mountain pass theorem and
energy estimation   to obtain the existence of normalized ground
states   of \eqref{e1.5}, by showing that, suitably combining some
of the main ideas from \cite{SV,Soave1}, compactness can be restored
  in the present setting.

\par  Finally,  let us sketch the ideas and methods used along this paper to obtain
our main results.  For the $L^2$-subcritical perturbation: $q\in (2,
2+\frac{4s}{3})$, it is difficult to get    the boundedness of the
(PS) sequence by the idea of \cite{J}. To get over this difficulty,
we use the truncation technique; to  restore the loss of compactness
of the (PS) sequence caused by the critical growth, we apply for the
concentration-compactness principle; and to obtain the multiplicity
of normalized solutions of \eqref{e1.5}-\eqref{e1.6}, we employ the
genus theory. For the $L^2$-supercritical perturbation: $q\in
(2+\frac{4s}{3}, 2^*_s),$ we use the Pohozaev manifold and  mountain
pass theorem to prove the existence of positive ground state
solutions for system \eqref{e1.5}-\eqref{e1.6} when $\mu>0$  small.
 While if the parameter $\mu>0$ is large, we employ a
fiber map and the concentration-compactness principle to  prove that
the (PS) sequence is strongly convergent, to obtain a normalized
solution of \eqref{e1.5}-\eqref{e1.6}.

\vskip0.2in

\par
\subsection{Paper outline}
This paper is organized as follows.

\noindent $\bullet$ Section  2 provides the main results, and
 Section 3 presents  some preliminary results that will be used frequently in the sequel.

\noindent $\bullet$ Section 4 presents the multiplicity of
normalized ground state solutions for system
\eqref{e1.5}-\eqref{e1.6} when $q\in (2, 2+\frac{4s}{3})$, and
finish the proof of Theorem \ref{Theorem 2.1}.

\noindent $\bullet$  Section 5 proves the existence of normalized
positive ground state solutions for problem
\eqref{e1.5}-\eqref{e1.6} when $q\in (2+\frac{4s}{3}, 2^*_s),$  and
Theorem \ref{Theorem 2.2} is proved if $\mu,\lambda>0$ are suitably
 small.

\noindent $\bullet$ In  Section 6 we give another existence result
for problem \eqref{e1.5}-\eqref{e1.6} with  $q\in (2+\frac{4s}{3},
2^*_s),$ when the parameter $\mu>0$ is large, and finishes the proof
of Theorem \ref{Theorem 2.3}.

\vskip0.2in
\par \noindent {\bf Notations.} In the sequel of  this paper, we denote by   $C,C_i > 0$
different positive constants whose values may vary from line to line
and are not essential to the problem. We denote by   $L^p =
L^p(\R^3)$ with $1< p \leq\infty$ the Lebesgue space with the
standard norm $\|u\|_p=\left(\int_{\R^3}|u|^pdx\right)^{1/p}.$

%%%%%%%%%%%%%%%%%%%%%%%%%%%%%%%%%%%%%%%%%%%%%%%%%%%%%%%%%%%%%%%%%%%%%%%%%%%%%%%%%%%%%%%%%%%%%%%%%%%%%%%%%%%%%%%%%%%%%%%%%%%%%%%
%%%%%%%%%%%%%%%%%%%%%%%%%%%%%%%%%%%%%%%%%%%%%%%%%%%%%%%%%%%%%%%%%%%%%%%%%%%%%%%%%%%%%%%%%%%%%%%%%%%%%%%%%%%%%%%%%%%%%%%%%%%%%%%
%%%%%%%%%%%%%%%%%%%%%%%%%%%%%%%%%%%%%%%%%%%%%%%%%%%%%%%%%%%%%%%%%%%%%%%%%%%%%%%%%%%%%%%%%%%%%%%%%%%%%%%%%%%%%%%%%%%%%%%%%%%%%%%
%%%%%%%%%%%%%%%%%%%%%%%%%%%%%%%%%%%%%%%%%%%%%%%%%%%%%%%%%%%%%%%%%%%%%%%%%%%%%%%%%%%%%%%%%%%%%%%%%%%%%%%%%%%%%%%%%%%%%%%%%%%%%%%
%%%%%%%%%%%%%%%%%%%%%%%%%%%%%%%%%%%%%%%%%%%%%%%%%%%%%%%%%%%%%%%%%%%%%%%%%%%%%%%%%%%%%%%%%%%%%%%%%%%%%%%%%%%%%%%%%%%%%%%%%%%%%%%
%%%%%%%%%%%%%%%%%%%%%%%%%%%%%%%%%%%%%%%%%%%%%%%%%%%%%%%%%%%%%%%%%%%%%%%%%%%%%%%%%%%%%%%%%%%%%%%%%%%%%%%%%%%%%%%%%%%%%%%%%%%%%%%
%%%%%%%%%%%%%%%%%%%%%%%%%%%%%%%%%%%%%%%%%%%%%%%%%%%%%%%%%%%%%%%%%%%%%%%%%%%%%%%%%%%%%%%%%%%%%%%%%%%%%%%%%%%%%%%%%%%%%%%%%%%%%%%
%%%%%%%%%%%%%%%%%%%%%%%%%%%%%%%%%%%%%%%%%%%%%%%%%%%%%%%%%%%%%%%%%%%%%%%%%%%%%%%%%%%%%%%%%%%%%%%%%%%%%%%%%%%%%%%%%%%%%%%%%%%%%%%
%%%%%%%%%%%%%%%%%%%%%%%%%%%%%%%%%%%%%%%%%%%%%%%%%%%%%%%%%%%%%%%%%%%%%%%%%%%%%%%%%%%%%%%%%%%%%%%%%%%%%%%%%%%%%%%%%%%%%%%%%%%%%%%
%%%%%%%%%%%%%%%%%%%%%%%%%%%%%%%%%%%%%%%%%%%%%%%%%%%%%%%%%%%%%%%%%%%%%%%%%%%%%%%%%%%%%%%%%%%%%%%%%%%%%%%%%%%%%%%%%%%%%%%%%%%%%%%
%%%%%%%%%%%%%%%%%%%%%%%%%%%%%%%%%%%%%%%%%%%%%%%%%%%%%%%%%%%%%%%%%%%%%%%%%%%%%%%%%%%%%%%%%%%%%%%%%%%%%%%%%%%%%%%%%%%%%%%%%%%%%%%
%%%%%%%%%%%%%%%%%%%%%%%%%%%%%%%%%%%%%%%%%%%%%%%%%%%%%%%%%%%%%%%%%%%%%%%%%%%%%%%%%%%%%%%%%%%%%%%%%%%%%%%%%%%%%%%%%%%%%%%%%%%%%%%

\vskip0.2in

  \s{Preliminary stuff}
    In this section, we first  give   the functional space setting, and  sketch  the fractional order Sobolev spaces  \cite{NPV}. We recall that,
for any
 $s\in (0, 1),$   the nature functions space associated with $(-\Delta)^s$
  is  $H:=H^s(\R^3)$ which is  a Hilbert space equipped
with the inner product and norm, respectively given by
\[\left<u,v\right>:=\int_{\R^3}((-\Delta)^{\frac{s}{2}}u(-\Delta)^{\frac{s}{2}}v+uv)dx, ~\|u\|^2_{H}=\left<u,u\right>.\]
\par The   homogeneous fractional Sobolev space
$D^{s,2}(\R^3)$ is defined by
\[D^{s,2}(\R^3)=\left\{u\in L^{2^*_s}(\R^3):\iint_{\R^{6}}\frac{|u(x)-u(y)|^2}{|x-y|^{3+2s}}dxdy<+\infty\right\},\]
a completion of $C_0^\infty(\R^3)$ under the norm
\[\|u\|^2:=\|u\|^2_{D^{s,2}(\R^3)}=\iint_{\R^{6}}\frac{|u(x)-u(y)|^2}{|x-y|^{3+2s}}dxdy,\]where
$2^*_s=6/(3-2s)$ is the critical exponent.  From
Proposition 3.4 and 3.6 in \cite{NPV} we have
$$
\|u\|^2=\|(-\Delta)^{\frac{s}{2}}u\|_2^2=\iint_{\R^{6}}\frac{|u(x)-u(y)|^2}{|x-y|^{3+2s}}dxdy.
$$
The best fractional Sobolev constant $S$ is defined as
\be\label{e3.1} S=\inf_{u\in
D^{s,2}(\R^3),u\neq0}\frac{\|(-\Delta)^{\frac{s}{2}}u\|_2^2}{(\int_{\R^3}|u|^{2^*_s}dx)^{\frac{2}{2^*_s}}}.
 \ee
 The work space $H^s_{rad}(\R^3)$ is defined by
\[
H^s_{rad}(\R^3):=\left\{u\in H^s(\R^3): ~u~ \mbox{is radially
decreasing}\right\}.
\]
Let $\mathbb{H} = H\times\R$ with the scalar product
$\langle\cdot,\cdot\rangle_{H}+\langle\cdot,\cdot\rangle_{\R},$ and
the corresponding norm
$\|(\cdot,\cdot)\|^2_{\mathbb{H}}=\|\cdot,\cdot\|_{H}^2+|\cdot,\cdot|_{\R}^2.$
\vskip0.1in
\par The following two inequalities play an important role in the proof
of our main results.

\begin{proposition}\label{prop 3.1}(Hardy-Littlewood-Sobolev inequality \cite{LL})
Let $l,r>1$ and $0<\mu<N$ be such that
 $\frac{1}{r}+\frac{1}{l}+\frac{\mu}{N}=2, f\in L^r(\RN)$ and $h\in L^l(\RN)$. Then there exists a constant $C(N,\mu,r,l)>0$ such that
$$
\left|\int_{\R^{N}}\int_{\R^{N}}f(x)h(y)|x-y|^{-\mu}dxdy\right|\leq
C(N,\mu,r,l)\|f\|_r\|h\|_l.
$$
\end{proposition}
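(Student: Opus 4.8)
The plan is to recognize Proposition~\ref{prop 3.1} as the classical Hardy--Littlewood--Sobolev inequality and to prove the non-sharp version --- existence of \emph{some} admissible constant $C(N,\mu,r,l)$, which is all the rest of the paper uses --- via a duality argument reducing matters to the mapping properties of the Riesz potential. First I would rewrite the bilinear form: setting
$$
I_\mu f(x) := \int_{\R^N}\frac{f(y)}{|x-y|^{\mu}}\,dy = \bigl(|\cdot|^{-\mu}*f\bigr)(x),
$$
H\"older's inequality yields
$$
\left|\int_{\R^N}\!\int_{\R^N} f(x)h(y)|x-y|^{-\mu}\,dx\,dy\right|
= \left|\int_{\R^N} h(y)\,I_\mu f(y)\,dy\right|
\le \|h\|_l\,\|I_\mu f\|_{l'},
$$
with $l'$ the conjugate exponent of $l$. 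The hypothesis $\tfrac1r+\tfrac1l+\tfrac\mu N=2$ is equivalent to $\tfrac1{l'}=\tfrac1r-\tfrac{N-\mu}N$, and the constraints $r>1$, $l>1$, $0<\mu<N$ force $1<r<l'<\infty$. Hence the proposition reduces to the single estimate $\|I_\mu f\|_{l'}\le C(N,\mu,r)\,\|f\|_r$ for $f\in L^r(\R^N)$.

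To prove this Riesz potential bound I would use that the kernel $k(x)=|x|^{-\mu}$ lies in the weak space $L^{N/\mu,\infty}(\R^N)$ (its distribution function is $|\{\,|k|>t\,\}| = |B_1|\,t^{-N/\mu}$) and proceed in one of two equivalent ways. Either invoke the weak Young inequality $\|k*f\|_{l'}\lesssim\|k\|_{L^{N/\mu,\infty}}\|f\|_r$, valid precisely when $1+\tfrac1{l'}=\tfrac\mu N+\tfrac1r$ with $1<r,l'<\infty$; or, to keep the argument self-contained, split $k=k\chi_{B_R}+k\chi_{B_R^{c}}$ at a radius $R>0$ to be chosen. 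The inner piece has $\|k\chi_{B_R}\|_1\simeq R^{N-\mu}$ (finite since $\mu<N$), so Young's convolution inequality gives $\|(k\chi_{B_R})*f\|_r\lesssim R^{N-\mu}\|f\|_r$; the outer piece has $\|k\chi_{B_R^{c}}\|_{r'}\simeq R^{N/r'-\mu}$ (finite since $r'>N/\mu$, which is exactly $\tfrac1r>\tfrac{N-\mu}N$), so H\"older's inequality gives the pointwise bound $|(k\chi_{B_R^{c}})*f(x)|\lesssim R^{N/r'-\mu}\|f\|_r$. For each level $\lambda>0$ one chooses $R$ so that the pointwise bound on the outer part equals $\lambda/2$, and Chebyshev's inequality applied to the inner part then produces the weak-type estimate
$$
\bigl|\{\,x:|I_\mu f(x)|>\lambda\,\}\bigr|\le C\left(\frac{\|f\|_r}{\lambda}\right)^{l'},
$$
where tracking the two powers of $R$ reproduces exactly the exponent $l'$.

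The last step, and the main obstacle, is that this single weak-type $(r,l')$ bound does not by itself give the strong-type estimate: $I_\mu$ genuinely fails to be bounded at the endpoints $r=1$ and $l'=\infty$. One therefore notes that the relation $\tfrac1{l'}=\tfrac1r-\tfrac{N-\mu}N$ is stable under perturbing $r$ in a small interval (with $N,\mu$ fixed), establishes the above weak-type bound at two exponents $r_1<r<r_2$, and applies the Marcinkiewicz interpolation theorem to upgrade to $\|I_\mu f\|_{l'}\le C(N,\mu,r)\|f\|_r$; combined with the H\"older reduction this finishes the proof. An alternative route, which additionally yields the sharp constant $C(N,\mu,r,l)$ and its extremal functions, is the symmetric-decreasing rearrangement argument based on the Riesz rearrangement inequality --- the approach of \cite{LL} --- but since only the existence of a constant is needed here, the interpolation argument is the shorter path.
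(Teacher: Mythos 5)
Your proposal is mathematically sound, but note that the paper itself offers no proof of Proposition~\ref{prop 3.1}: it is quoted verbatim as a classical result from the reference \cite{LL} (Lieb--Loss), where it is established in sharp form via the Riesz rearrangement inequality. What you supply is the standard non-sharp proof, and every step checks out: the duality reduction via H\"older to the Riesz potential bound $\|I_\mu f\|_{l'}\le C\|f\|_r$ is correct (the identity $\tfrac1{l'}=\tfrac1r-\tfrac{N-\mu}{N}$ follows from $\tfrac1r+\tfrac1l+\tfrac\mu N=2$, and $r>1$, $l>1$, $\mu<N$ do give $1<r<l'<\infty$); the Hedberg-type splitting of the kernel at radius $R$ uses exactly the right integrability conditions ($\mu<N$ for the inner $L^1$ piece, $r'>N/\mu$ --- equivalent to $l'<\infty$ --- for the outer $L^{r'}$ piece), and optimizing in $R$ does reproduce the weak-type $(r,l')$ exponent, as one verifies from $\mu-N/r'=N/l'$. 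Your final observation is also the right one: the single weak-type bound is not enough, and the Marcinkiewicz interpolation between two nearby admissible pairs on the line $\tfrac1q=\tfrac1p-\tfrac{N-\mu}{N}$ is what upgrades it to the strong-type estimate. Compared with the rearrangement proof in \cite{LL}, your route loses the sharp constant and the identification of extremals, but it is shorter, requires no symmetrization machinery, and delivers exactly what the paper uses, namely the existence of some finite constant $C(N,\mu,r,l)$.
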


\par We recall  the fractional Gagliardo-Nirenberg inequality.
\bl\label{Lemma 3.2}(\cite{FLS}) Let $ 0<s <1,$ and $p\in (2,
2^*_s).$ Then there exists a constant $C(p,s) =
S^{-\frac{\delta_{p,s}}{2}}>0$ such that
\be\label{e3.2}\|u\|_p^p\leq
{C}(p,s)\|(-\Delta)^{\frac{s}{2}}u\|_2^{p\delta_{p,s}}\|u\|_2^{p(1-\delta_{p,s})},~~~\forall
u\in H^s(\R^3),\ee where  $\delta_{p,s}=3(p-2)/2ps.$ \el

\bl\label{Lemma 3.3} (Lemma 5.1  \cite{DS}) If $u_n\rightharpoonup
u$ in $H^s_{rad}(\R^3)$, then
\[\int_{\R^3}\phi_{u_n}^tu_n^2dx\rightarrow \int_{\R^3}\phi_{u}u^2dx,\]
and \[\int_{\R^3}\phi_{u_n}^tu_n\varphi dx\rightarrow
\int_{\R^3}\phi^t_{u}u\varphi dx,~~\forall\varphi\in
H^s_{rad}(\R^3).\] \el

\par From Proposition \ref{prop 3.1}, with $l= r = \frac{6}{3+2t}$, then Hardy-Littlewood-Sobolev
inequality implies that: \be\label{e3.3}\int_{\R^3}\phi_u^tu^2dx=
\int_{\R^3}\left(\frac{1}{|x|^{3-2t}}\ast u^2\right)u^2dx\leq
\Gamma_t\|u\|_{\frac{12}{3+2t}}^4.\ee It is easy to enumerate that
\[q\delta_{q,s}\left\{\begin{array}{ll}
<2,&\mbox{if}~~2<q<\bar{q};
\\ =2,&\mbox{if}~~q=\bar{q};\\
>2,&\mbox{if}~~ \bar{q}<q<2^*_s,
\end{array}\right.\]
where $\bar{q}:=2+\frac{4s}{3}$ is the $L^2$-critical exponent.

\par Now,  we introduce the Pohozaev mainfold associated to
\eqref{e1.7}, which can be    derived from \cite{Teng}.

 \begin{proposition}\label{prop 3.4} Let $u\in H^s(\RN)\cap L^\infty(\RN)$ be a weak solution of \eqref{e1.7}, then $u$ satisfies the equality
  $$
  \frac{3-2s}{2}\|u\|^2+\frac{2t+3}{4}\lambda\int_{\R^3}\phi_u^tu^2dx=\frac{3\alpha}{2}\|u\|^2_2+\frac{3\mu}{q}\int_{\R^3}|u|^qdx
  +\frac{3}{2^*_{s}}\int_{\R^3}|u|^{2^*_{s}}dx.
  $$
  \end{proposition}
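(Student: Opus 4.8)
The plan is to obtain the identity by testing \eqref{e1.7} against the dilation generator $x\cdot\nabla u$, the standard route to a Pohozaev identity; equivalently one differentiates at $\rho=1$ the action
\[
J_\alpha(v):=\frac12\|(-\Delta)^{s/2}v\|_2^2+\frac{\lambda}{4}\int_{\R^3}\phi_v^t v^2\,dx-\frac{\alpha}{2}\|v\|_2^2-\frac{\mu}{q}\int_{\R^3}|v|^q\,dx-\frac{1}{2^*_s}\int_{\R^3}|v|^{2^*_s}\,dx
\]
along the path $u_\rho(x):=u(x/\rho)$. Since $u$ solves \eqref{e1.7} we have $J_\alpha'(u)=0$, so as soon as $x\cdot\nabla u$ is an admissible test function we get $\langle J_\alpha'(u),x\cdot\nabla u\rangle=0$, and it remains to evaluate the five resulting integrals. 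The hypothesis $u\in H^s(\R^3)\cap L^\infty(\R^3)$ is used precisely here: by the regularity theory for $(-\Delta)^s$ and a bootstrap on \eqref{e1.7}, $u$ and $\phi_u^t$ are smooth and decay fast enough that $x\cdot\nabla u\in H^s(\R^3)$ and all boundary terms in the integrations by parts below vanish.

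For the kinetic term I would invoke the fractional Pohozaev identity
\[
\int_{\R^3}(-\Delta)^s u\,(x\cdot\nabla u)\,dx=-\frac{3-2s}{2}\,\|(-\Delta)^{s/2}u\|_2^2,
\]
which is proved via the Caffarelli--Silvestre $s$-harmonic extension and already underlies the Pohozaev manifold used in \cite{Teng}. The three local nonlinearities are immediate: writing $f(u)(x\cdot\nabla u)=x\cdot\nabla F(u)$ with $F'=f$ and using the decay of $u$,
\[
\int_{\R^3}u\,(x\cdot\nabla u)\,dx=-\tfrac32\|u\|_2^2,\qquad \int_{\R^3}|u|^{q-2}u\,(x\cdot\nabla u)\,dx=-\tfrac3q\int_{\R^3}|u|^q\,dx,
\]
and similarly $\int_{\R^3}|u|^{2^*_s-2}u\,(x\cdot\nabla u)\,dx=-\tfrac{3}{2^*_s}\int_{\R^3}|u|^{2^*_s}\,dx$.

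The genuinely nonlocal term $\int_{\R^3}\phi_u^t u\,(x\cdot\nabla u)\,dx$ is the step I expect to be the main obstacle. Here I would write $u\,(x\cdot\nabla u)=\tfrac12\,x\cdot\nabla(u^2)$ and integrate by parts to get $-\tfrac12\int_{\R^3}(x\cdot\nabla\phi_u^t)u^2\,dx-\tfrac32\int_{\R^3}\phi_u^t u^2\,dx$, and then use that $w:=\phi_u^t$ solves $(-\Delta)^t w=u^2$: applying the same fractional Pohozaev identity with exponent $t$ to $w$ gives $\int_{\R^3}(x\cdot\nabla\phi_u^t)u^2\,dx=\int_{\R^3}(x\cdot\nabla w)(-\Delta)^t w\,dx=-\tfrac{3-2t}{2}\|(-\Delta)^{t/2}w\|_2^2=-\tfrac{3-2t}{2}\int_{\R^3}\phi_u^t u^2\,dx$. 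Substituting yields $\int_{\R^3}\phi_u^t u\,(x\cdot\nabla u)\,dx=-\tfrac{3+2t}{4}\int_{\R^3}\phi_u^t u^2\,dx$; the condition $2s+2t>3$ with $s,t\in(0,1)$ keeps the Hardy--Littlewood--Sobolev exponents in \eqref{e3.3} admissible, so every integral above is finite.

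Collecting the contributions in $\langle J_\alpha'(u),x\cdot\nabla u\rangle=0$ gives
\[
-\frac{3-2s}{2}\|(-\Delta)^{s/2}u\|_2^2-\frac{(3+2t)\lambda}{4}\int_{\R^3}\phi_u^t u^2\,dx=-\frac{3\alpha}{2}\|u\|_2^2-\frac{3\mu}{q}\int_{\R^3}|u|^q\,dx-\frac{3}{2^*_s}\int_{\R^3}|u|^{2^*_s}\,dx,
\]
and multiplying by $-1$ (recalling $\|u\|^2=\|(-\Delta)^{s/2}u\|_2^2$) is exactly the asserted equality. If one prefers to sidestep the admissibility of $x\cdot\nabla u$, the same conclusion follows from the dilation argument, which only requires that $\rho\mapsto u_\rho$ be differentiable into $H^s(\R^3)$ at $\rho=1$ with derivative $-x\cdot\nabla u$ --- again a consequence of the regularity afforded by $u\in L^\infty(\R^3)$.
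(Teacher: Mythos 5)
Your derivation is correct: the paper itself gives no proof of Proposition 3.4 (it is stated as "derived from \cite{Teng}"), and your argument is exactly the standard one that reference carries out, with all coefficients checking out — in particular the nonlocal term, where combining $u\,(x\cdot\nabla u)=\tfrac12 x\cdot\nabla(u^2)$ with the Pohozaev identity for $w=\phi^t_u$ at exponent $t$ and the identity $\|(-\Delta)^{t/2}w\|_2^2=\int_{\R^3}\phi^t_u u^2\,dx$ correctly produces the coefficient $-\tfrac{3+2t}{4}$, and the scaling $\int_{\R^3}\phi^t_{u_\rho}u_\rho^2\,dx=\rho^{3+2t}\int_{\R^3}\phi^t_u u^2\,dx$ confirms it via your dilation route. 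The one place where you assert rather than prove is the admissibility of $x\cdot\nabla u$ (equivalently, the $H^s$-differentiability of $\rho\mapsto u_\rho$ at $\rho=1$): in the critical, nonlocal setting this is where the real technical work of \cite{Teng} lies, via the Caffarelli--Silvestre extension together with a cut-off $\varphi_R(x)\,x\cdot\nabla u$ and a passage to the limit $R\to\infty$, and a complete proof would need to reproduce that truncation argument rather than appeal to "regularity and decay" in one line.
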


 \bl\label{Lemma 3.5} Let $u\in H^s(\RN)$ be a weak solution of \eqref{e1.7}, then we can construct the following Pohozaev manifold
  \[\mathcal {P}_{a}=\{u\in S_a: P_\mu(u)=0\},\] where
  $$
  P_\mu(u)=s\|u\|^2+\frac{3-2t}{4}\lambda\int_{\R^3}\phi_u^tu^2dx-s\mu \delta_{q,s}\int_{\R^3}|u|^qdx
  -s\int_{\R^3}|u|^{2^*_{s}}dx.
  $$
  \el
  \bp
  From Proposition \ref{prop 3.4}, we know that $u$ satisfies the Phohzaev identity as follows
   \be\label{e3.4}
  \frac{3-2s}{2}\|u\|^2+\frac{2t+3}{4}\lambda\int_{\R^3}\phi_u^tu^2dx=\frac{3\alpha}{2}\|u\|^2_2+\frac{3\mu}{q}\int_{\R^3}|u|^qdx
  +\frac{3}{2^*_{s}}\int_{\R^3}|u|^{2^*_{s}}dx.
  \ee
  Moreover, since $u$ is the weak solution of  system \eqref{e1.7}, we have
  \be\label{e3.5}
  \|u\|^2+\lambda\int_{\R^3}\phi_u^tu^2dx=\alpha\|u\|^2_2+\mu\int_{\R^3}|u|^qdx
  + \int_{\R^3}|u|^{2^*_{s}}dx.\ee
Combining with \eqref{e3.4} and \eqref{e3.5}, we get
\[s\|u\|^2+\frac{3-2t}{4}\lambda\int_{\R^3}\phi_u^tu^2dx=s\mu \delta_{q,s}\int_{\R^3}|u|^qdx
+s\int_{\R^3}|u|^{2^*_{s}}dx,\] which finishes the proof.  \ep

\par Finally, we  state the following well-known embedding result.
 \bl\label{Lemma 3.6} (\cite{Dinh}). Let $N\geq  2.$ The
embedding $H^s_{rad}(\R^N)\hookrightarrow L^p(\R^N)$ is compact for
any $2 < p < 2^*_s.$ \el

\vskip0.2in

\s{  Proof of Theorem \ref{Theorem 2.1}}

In this section, we aim to show the multiplicity of normalized
solutions to  \eqref{e1.5}-\eqref{e1.6}. To begin with, we recall
the definition of a genus. Let $X$ be a Banach space and let $A$ be
a subset of $X$. The set $A$ is said to be symmetric if $u\in A$
implies that $-u \in A$. We denote the set
\[
\Sigma:=\{A\subset X\setminus\{0\}:A ~\mbox{is closed and symmetric
with respect to the origin}\}.
\]
For $A\in \Sigma$, define
\[
\gamma(A)=\left\{\begin{array}{l} 0,~~\mbox{if}~~A=\emptyset,\vspace{0.2cm}\\
 \inf\{k\in \mathbb{N} :\exists ~\mbox{an odd}~ \varphi \in C(A,\R^k\setminus\{0\})\}, \vspace{0.2cm}\\
 +\infty,\mbox{if no such odd map},
\end{array}\right.\]
and that $\Sigma_k=\{A\in \Sigma: \gamma(A)\geq k\}$.

\par In order to overcome the loss of compactness of the (PS)
sequences, we need to apply for the following
concentration-compactness principle.
\begin{lemma}\label{Lemma 4.1}(\cite{ZZR})  Let $\{u_n\}$ be a bounded
sequence in ${D}^{s,2}(\R^3)$ converging weakly and a.e. to some $u
\in {D}^{s,2}(\R^3)$. We have that
$|(-\Delta)^{\frac{s}{2}}u_n|^2\rightharpoonup \omega$ and
$|u_n|^{2^*_s}\rightharpoonup \zeta$ in the sense of measures.
  Then, there exist some at most a countable set
$J$, a family of points $\{z_j\}_{j\in J}\subset \R^3$, and families
of positive numbers   $\{\zeta_j\}_{j\in J}$ and $\{\omega_j\}_{j\in
J}$ such that

\begin{equation}\label{e4.1}
    \omega \geq|(-\Delta)^{\frac{s}{2}}u|^2+\sum_{j\in J}\omega_j\delta_{z_j},
\end{equation}
\begin{equation}\label{e4.2}
   \zeta=|u|^{2^*_s}+\sum_{j\in J}\zeta_j\delta_{z_j}
\end{equation}
and
\begin{equation}\label{e4.3}
  \omega_j\geq S\zeta^\frac{2}{2^*_s}_j,
    \end{equation}
where $\delta_{z_j}$ is the Dirac-mass of mass 1 concentrated at
$z_j \in \R^3$.
\end{lemma}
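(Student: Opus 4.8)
My plan is to prove this as the fractional analogue of P.-L.\ Lions' second concentration--compactness lemma, following the Palatucci--Pisante scheme adapted to the normalization $\|u\|^2=\|(-\Delta)^{s/2}u\|_2^2$ used here. \textbf{First I would reduce to the case $u=0$.} Put $v_n:=u_n-u$, so $v_n\rightharpoonup 0$ in $D^{s,2}(\R^3)$ and $v_n\to0$ a.e.; by the Brezis--Lieb lemma $|u_n|^{2^*_s}-|v_n|^{2^*_s}-|u|^{2^*_s}\to0$ in $L^1(\R^3)$, hence $|v_n|^{2^*_s}\rightharpoonup\widetilde\zeta:=\zeta-|u|^{2^*_s}\,dx$ in the sense of measures, while the linearity of $(-\Delta)^{s/2}$ together with $(-\Delta)^{s/2}u_n\rightharpoonup(-\Delta)^{s/2}u$ in $L^2$ gives, upon testing against $\varphi\in C_c(\R^3)$, that $|(-\Delta)^{s/2}v_n|^2\rightharpoonup\widetilde\omega:=\omega-|(-\Delta)^{s/2}u|^2\,dx\ge0$. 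Thus it suffices to prove the three relations for a sequence $v_n\rightharpoonup0$, with $\widetilde\omega,\widetilde\zeta$ in place of $\omega,\zeta$; re-adding the absolutely continuous densities $|u|^{2^*_s}$ and $|(-\Delta)^{s/2}u|^2$, whose atomic parts vanish, then yields \eqref{e4.1}--\eqref{e4.3} in general.

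\textbf{Next I would establish a reverse Sobolev inequality between the limit measures.} For $v_n\rightharpoonup0$ and $\varphi\in C_c^\infty(\R^3)$, apply \eqref{e3.1} to $\varphi v_n$:
\[
S\Big(\int_{\R^3}|\varphi v_n|^{2^*_s}\,dx\Big)^{2/2^*_s}\le\|(-\Delta)^{s/2}(\varphi v_n)\|_2^2 .
\]
The left side converges to $S\big(\int|\varphi|^{2^*_s}\,d\widetilde\zeta\big)^{2/2^*_s}$, and the heart of the matter is the localization identity
\[
\lim_{n\to\infty}\Big(\|(-\Delta)^{s/2}(\varphi v_n)\|_2^2-\int_{\R^3}|\varphi(x)|^2\,|(-\Delta)^{s/2}v_n(x)|^2\,dx\Big)=0 .
\]
Granting it, and since $\int|\varphi|^2|(-\Delta)^{s/2}v_n|^2\,dx\to\int|\varphi|^2\,d\widetilde\omega$, one obtains
\[
\Big(\int_{\R^3}|\varphi|^{2^*_s}\,d\widetilde\zeta\Big)^{1/2^*_s}\le S^{-1/2}\Big(\int_{\R^3}|\varphi|^2\,d\widetilde\omega\Big)^{1/2}
\]
for every $\varphi\in C_c^\infty(\R^3)$, and by density for every $\varphi\in C_c(\R^3)$.

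\textbf{The hard part is the localization identity}, the only place where the nonlocality of $(-\Delta)^{s/2}$ really bites, since the Gagliardo seminorm does not localize pointwise. I would prove it by writing $(-\Delta)^{s/2}(\varphi v_n)=\varphi\,(-\Delta)^{s/2}v_n+[(-\Delta)^{s/2},\varphi]v_n$ and bounding the commutator through a fractional-Leibniz (Kenig--Ponce--Vega--type) estimate by $C\|v_n\|_{L^2(K)}$ plus a kernel-tail term, where $K\supset\mathrm{supp}\,\varphi$ is a fixed compact set: the local term tends to $0$ because $D^{s,2}(\R^3)\hookrightarrow L^2_{\mathrm{loc}}(\R^3)$ compactly (recall $2<2^*_s$) and $v_n\rightharpoonup0$, while the tail is handled by the decay of $|x-y|^{-(3+2s)}$ off $\mathrm{supp}\,\varphi$. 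Alternatively, and more elementarily, one works directly with $\iint_{\R^{6}}|x-y|^{-(3+2s)}|\varphi(x)v_n(x)-\varphi(y)v_n(y)|^2\,dxdy$, uses the splitting $\varphi(x)v_n(x)-\varphi(y)v_n(y)=\varphi(x)(v_n(x)-v_n(y))+v_n(y)(\varphi(x)-\varphi(y))$, expands the square, and absorbs the two remainder terms via $|\varphi(x)-\varphi(y)|^2\le C\min\{|x-y|^2,1\}$ and the local strong $L^2$-convergence of $v_n$. I expect this estimate to carry essentially all the work; everything around it is soft.

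\textbf{Finally I would extract the atomic structure.} The reverse inequality together with inner/outer regularity upgrades to $\widetilde\zeta(A)\le S^{-2^*_s/2}\,\widetilde\omega(A)^{2^*_s/2}$ for every Borel set $A\subset\R^3$. Since $2^*_s/2>1$, the usual subdivision argument — partition a set on which $\widetilde\omega$ has no atoms into finitely many pieces of arbitrarily small $\widetilde\omega$-mass — shows that $\widetilde\zeta$ charges no such set, so, $\widetilde\omega$ being finite and thus having at most countably many atoms $\{z_j\}_{j\in J}$, necessarily $\widetilde\zeta=\sum_{j\in J}\zeta_j\delta_{z_j}$. Choosing $\varphi$ concentrated near $z_j$ and shrinking it to the point yields $\zeta_j^{1/2^*_s}\le S^{-1/2}\widetilde\omega(\{z_j\})^{1/2}$, so that $\omega_j:=\widetilde\omega(\{z_j\})\ge S\zeta_j^{2/2^*_s}$ and $\widetilde\omega\ge\sum_{j\in J}\omega_j\delta_{z_j}$; combining this with the reduction of the first step gives \eqref{e4.1}, \eqref{e4.2} and \eqref{e4.3}.
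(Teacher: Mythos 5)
The paper does not prove Lemma \ref{Lemma 4.1} but quotes it from \cite{ZZR}, and your argument is precisely the standard proof of the fractional Lions concentration--compactness lemma given there (and in Palatucci--Pisante): Brezis--Lieb reduction to $u=0$, the localized reverse Sobolev inequality $\left(\int|\varphi|^{2^*_s}d\widetilde\zeta\right)^{1/2^*_s}\le S^{-1/2}\left(\int|\varphi|^2d\widetilde\omega\right)^{1/2}$, and the subdivision argument extracting the atoms, so it is correct and takes the same route. The only step needing real care is the remainder $\int_{\R^3}v_n(x)^2\bigl(\int_{\R^3}|\varphi(x)-\varphi(y)|^2|x-y|^{-3-2s}dy\bigr)dx$, where local strong $L^2$-convergence alone does not suffice because $v_n$ is only bounded in $L^{2^*_s}(\R^3)$: one must combine it with H\"older against $\|v_n\|_{2^*_s}$ and the $|x|^{-3-2s}$ decay of the inner weight off $\mathrm{supp}\,\varphi$ --- a point you do flag via the ``kernel-tail term''.
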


\begin{lemma}\label{Lemma 4.2} (\cite{ZZR})  Let
$\{u_n\}\subset{D}^{s,2}(\R^3)$ be a sequence in Lemma \ref{Lemma
4.1} and define that
\[\omega_\infty:=\lim_{R\rightarrow\infty}\limsup_{n\rightarrow\infty}
 \int_{|x|\geq R}|(-\Delta)^{\frac{s}{2}}u_n|^2dx,~~~
 \zeta_\infty:=\lim_{R\rightarrow\infty}\limsup_{n\rightarrow\infty}
 \int_{|x|\geq R}|u_n|^{2^*_s}dx.
 \]
 Then it follows that
\begin{equation} \label{e4.4}\omega_\infty\geq S\zeta^\frac{2}{2^*_s}_\infty,\end{equation}
\begin{equation}\label{e4.5}\limsup_{n\rightarrow\infty}\int_{\R^3}|(-\Delta)
^{\frac{s}{2}}u_n|^2dx=\int_{\R^3}d\omega+\omega_\infty\end{equation}
and
\begin{equation}\label{e4.6}\limsup_{n\rightarrow\infty}\int_{\R^3}|u_n|^{2^*_s}dx
=\int_{\R^3}d\zeta+\zeta_\infty.\end{equation}
\end{lemma}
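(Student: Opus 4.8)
The statement is the standard \emph{concentration at infinity} companion of Lemma \ref{Lemma 4.1}, and I would prove it in two stages: first the mass-splitting identities \eqref{e4.5}--\eqref{e4.6}, which are pure measure-theoretic bookkeeping, and then the Sobolev-type inequality \eqref{e4.4}, which is the only place where a genuine estimate enters. Write $B_R:=\{x\in\R^3:|x|<R\}$. For \eqref{e4.5}--\eqref{e4.6}: since $\{u_n\}$ is bounded in ${D}^{s,2}(\R^3)$, the masses $\|(-\Delta)^{\frac{s}{2}}u_n\|_2^2$ and $\|u_n\|_{2^*_s}^{2^*_s}$ are uniformly bounded, so $\omega$ and $\zeta$ are finite measures and the set of radii $R$ for which $\omega(\partial B_R)>0$ or $\zeta(\partial B_R)>0$ is at most countable. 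Fixing $R$ outside this exceptional set, vague convergence $|(-\Delta)^{\frac{s}{2}}u_n|^2\rightharpoonup\omega$ gives $\int_{B_R}|(-\Delta)^{\frac{s}{2}}u_n|^2\,dx\to\omega(B_R)$, and writing $\int_{\R^3}=\int_{B_R}+\int_{\R^3\setminus B_R}$ and taking $\limsup_n$ — which splits because the $B_R$-part converges — one obtains, for every such $R$,
\[
\limsup_{n\to\infty}\int_{\R^3}|(-\Delta)^{\frac{s}{2}}u_n|^2\,dx=\omega(B_R)+\limsup_{n\to\infty}\int_{\R^3\setminus B_R}|(-\Delta)^{\frac{s}{2}}u_n|^2\,dx .
\]
The left-hand side does not depend on $R$; letting $R\to\infty$, the first term on the right increases to $\int_{\R^3}d\omega$ by monotone convergence, while the second is nonincreasing in $R$ and tends to $\omega_\infty$ by its very definition. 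This gives \eqref{e4.5}, the identical computation with $\zeta$ in place of $\omega$ gives \eqref{e4.6}, and in particular both limsups are genuine limits along the chosen subsequence.

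For \eqref{e4.4}, fix $\psi\in C^\infty(\R^3)$ with $0\le\psi\le1$, $\psi\equiv0$ on $B_1$ and $\psi\equiv1$ on $\R^3\setminus B_2$, and set $\psi_R(x)=\psi(x/R)$, so that $\psi_R\equiv0$ on $B_R$, $\psi_R\equiv1$ outside $B_{2R}$ and $\|\nabla\psi_R\|_\infty\le C/R$. Applying the fractional Sobolev inequality attached to the constant $S$ of \eqref{e3.1} to $\psi_R u_n\in {D}^{s,2}(\R^3)$,
\[
S\Big(\int_{\R^3}|\psi_R u_n|^{2^*_s}\,dx\Big)^{\frac{2}{2^*_s}}\le\|(-\Delta)^{\frac{s}{2}}(\psi_R u_n)\|_2^2 .
\]
Since $\psi_R\equiv1$ off $B_{2R}$, the left-hand side is $\ge S\big(\int_{\R^3\setminus B_{2R}}|u_n|^{2^*_s}\,dx\big)^{2/2^*_s}$, so taking $\limsup_n$ and then $R\to\infty$ the left-hand side is bounded below by $S\zeta_\infty^{2/2^*_s}$. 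On the right-hand side the target is the localization estimate
\[
\limsup_{n\to\infty}\|(-\Delta)^{\frac{s}{2}}(\psi_R u_n)\|_2^2\le\limsup_{n\to\infty}\int_{\R^3\setminus B_R}|(-\Delta)^{\frac{s}{2}}u_n|^2\,dx+o_R(1),
\]
and once this is known, sending $R\to\infty$ and invoking \eqref{e4.5} forces $S\zeta_\infty^{2/2^*_s}\le\omega_\infty$, which is \eqref{e4.4}.

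The localization estimate is the crux, and the one point where the nonlocality of $(-\Delta)^s$ is really felt, since $(-\Delta)^{\frac{s}{2}}(\psi_R u_n)\neq\psi_R(-\Delta)^{\frac{s}{2}}u_n$. My plan is to write $(-\Delta)^{\frac{s}{2}}(\psi_R u_n)=\psi_R(-\Delta)^{\frac{s}{2}}u_n+[(-\Delta)^{\frac{s}{2}},\psi_R]u_n$ and, using $(a+b)^2\le(1+\varepsilon)a^2+(1+\varepsilon^{-1})b^2$, to obtain for every $\varepsilon>0$
\[
\|(-\Delta)^{\frac{s}{2}}(\psi_R u_n)\|_2^2\le(1+\varepsilon)\int_{\R^3}\psi_R^2|(-\Delta)^{\frac{s}{2}}u_n|^2\,dx+(1+\varepsilon^{-1})\big\|[(-\Delta)^{\frac{s}{2}},\psi_R]u_n\big\|_2^2,
\]
where the first term is $\le\int_{\R^3\setminus B_R}|(-\Delta)^{\frac{s}{2}}u_n|^2\,dx$ because $0\le\psi_R\le1$ and $\psi_R\equiv0$ on $B_R$. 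It then remains to show $\limsup_n\big\|[(-\Delta)^{\frac{s}{2}},\psi_R]u_n\big\|_2^2\to0$ as $R\to\infty$, and to let $\varepsilon\to0$ at the very end. This is the genuinely delicate part: one represents the commutator through its singular kernel $(\psi_R(x)-\psi_R(y))|x-y|^{-3-s}$ and splits the integral into the region near the diagonal, where the Lipschitz bound $|\psi_R(x)-\psi_R(y)|\le C|x-y|/R$ produces a negative power of $R$, and the far region, where the kernel is integrable and, since $\psi_R$ is constant off the annulus $B_{2R}\setminus B_R$, only that annulus contributes; on the latter one uses the local strong convergence $u_n\to u$ in $L^2(B_\rho)$, coming from the compact embedding $H^s(B_\rho)\hookrightarrow L^2(B_\rho)$ on the bounded ball $B_\rho$, together with $u\in L^{2^*_s}(\R^3)$, to make the remaining $u$-dependent tails vanish as $R\to\infty$. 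This annulus/diagonal bookkeeping with the fractional kernel — the analogue of the classical concentration-compactness-at-infinity argument — is the step I expect to be the main obstacle; everything else is routine.
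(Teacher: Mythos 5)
The paper offers no proof of this lemma at all: it is quoted verbatim from \cite{ZZR}, so there is no internal argument to compare against, and what you have written is a reconstruction of the standard proof from that reference (and from Palatucci--Pisante before it). Your measure-theoretic part for \eqref{e4.5}--\eqref{e4.6} is complete and correct: splitting over $B_R$ and its complement for radii with $\omega(\partial B_R)=\zeta(\partial B_R)=0$, using that the $B_R$-part converges so the $\limsup$ splits, and passing $R\to\infty$ through such radii (which suffices by monotonicity in $R$) is exactly the right bookkeeping; only the parenthetical claim that the limsups are then genuine limits does not follow and should be dropped. For \eqref{e4.4}, testing the Sobolev inequality on $\psi_Ru_n$ and reducing to the vanishing of the commutator $[(-\Delta)^{s/2},\psi_R]u_n$ in $L^2$ is the canonical route. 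Two remarks on the step you flag as the obstacle. First, ``only the annulus $B_{2R}\setminus B_R$ contributes'' is not literally true: pairs with $x\in B_R$ and $y\notin B_{2R}$ (or vice versa) give $\psi_R(x)-\psi_R(y)=\mp1$; but such pairs satisfy $|x-y|\ge R$, so they are absorbed into your far-region estimate and nothing breaks. Second, in every application made in this paper the sequence is bounded in $H^s(\R^3)$, hence in $L^2(\R^3)$, and then the commutator estimate is far easier than your annulus/compactness scheme: the kernel $K_R(x,y)=(\psi_R(x)-\psi_R(y))|x-y|^{-3-s}$ satisfies $|\psi_R(x)-\psi_R(y)|\le\min\{2,C|x-y|/R\}$, whence
\[
\sup_x\int_{\R^3}|K_R(x,y)|\,dy\le \frac{C}{R}\int_{|z|\le R}\frac{dz}{|z|^{2+s}}+2\int_{|z|\ge R}\frac{dz}{|z|^{3+s}}\le CR^{-s},
\]
and the Schur test gives $\|[(-\Delta)^{s/2},\psi_R]u_n\|_2^2\le CR^{-2s}\|u_n\|_2^2\to0$ uniformly in $n$, with no local compactness and no case analysis. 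If, however, one insists on the lemma exactly as stated, for sequences bounded only in $D^{s,2}(\R^3)$ and hence only in $L^{2^*_s}$, this uniform $L^2$ bound is unavailable, the cross term does not vanish by such a crude bound (a spreading profile $R^{-(3-2s)/2}U(\cdot/R)$ shows the naive estimate saturates), and one genuinely needs the finer splitting you describe --- this is precisely where \cite{ZZR} spends its effort. In that generality your sketch identifies the right ingredients but does not carry them out, so the crux of \eqref{e4.4} remains a plan rather than a proof; in the $H^s$-bounded setting actually used in this paper, the argument closes as above.
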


\vskip0.1in \par For $u\in S_{r,a}$, in view of Lemma \ref{Lemma
3.2},
 and the Sobolev inequality, one has that
\begin{equation}\label{e4.7}\begin{split} I_{\mu}(u)=&\frac{1}{2}\int_{\R^3}|(-\Delta
)^\frac{s}{2} u|^2dx+\frac{\lambda}{4}
\int_{\R^3}\phi^t_uu^2dx-\frac{\mu}{q}\int_{\R^3}|u|^qdx -
  \frac{1}{2^*_s}\int_{\R^3}|u|^{2^*_s}dx\\
  \geq& \frac{1}{2}\|(-\Delta )^\frac{s}{2} u\|^2_2
   -\frac{\mu}{q}a^{q(1-\delta_{q,s})}C_{q,s}\|(-\Delta )^\frac{s}{2} u\|_2^{q\delta_{q,s}}  -\frac{1}{2^*_s}S^{-\frac{2^*_s}{2}}\|(-\Delta )^\frac{s}{2} u\|_2^{2^*_s}\\
 :=&g(\|(-\Delta )^\frac{s}{2} u\|_2),
\end{split}
\end{equation}
where
\[
g(r)=\frac{1}{2}r^2-\frac{\mu}{q}a^{q(1-\delta_{q,s})}C_{q,s}r^{q\delta_{q,s}}
-\frac{1}{2^*_s}S^{-\frac{2^*_s}{2}}r^{2^*_s}.
\]
Recalling that $2<q<2+\frac{4s}{3}$, we get that $q\delta_{q,s}<2$,
and there exists $\beta> 0$ such that, if $\mu
a^{q(1-\delta_{q,s})}\leq\beta$,
 the function $g$ attains its positive local maximum. More precisely, there exist two constants $0<R_1<R_2<+\infty$, such that
  \[g(r)>0,~ \forall r\in (R_1,R_2);~~~g(r)<0, ~\forall r\in (0,R_1)\cup (R_2,+\infty).\] Let $\tau:\R^+\rightarrow [0,1]$  be a
  nonincreasing and $C^\infty$ function satisfying
\[
\tau(r)=\left\{\begin{array}{ll} 1,&\mbox{if}~~r\in [0,  R_1],\vspace{0.2cm}\\
0,&\mbox{if}~~r\in [R_2,+\infty).
\end{array}\right.\]

In the sequel, let us consider the truncated functional
\[
I_{\mu,\tau}(u)=\frac{1}{2}\int_{\R^3}|(-\Delta)^\frac{s}{2}u|^2dx+\frac{\lambda}{4}\int_{\R^3}\phi^t_uu^2dx-\frac{\mu}{q}\int_{\R^3}|u|^qdx-
  \frac{\tau(\|(-\Delta )^\frac{s}{2} u\|_2)}{2^*_s}\int_{\R^3}|u|^{2^*_s}dx.
\]
For $u\in S_{r,a}$, again by Lemma \ref{Lemma 3.2},
 and the Sobolev inequality,  it is easy to see that
\[\begin{split}
I_{\mu,\tau}(u) \geq& \frac{1}{2}\|(-\Delta )^\frac{s}{2} u\|^2_2
-\frac{\mu}{q}a^{q(1-\delta_{q,s})}C_{q,s}\|(-\Delta)^\frac{s}{2}
u\|_2^{q\delta_{q,s}}
-\frac{\tau(\|(-\Delta )^\frac{s}{2} u\|_2)}{2^*_s}S^{-\frac{2^*_s}{2}}\|(-\Delta )^\frac{s}{2} u\|_2^{2^*_s}\\
 :=&\widetilde{g}(\|(-\Delta )^\frac{s}{2} u\|_2),
\end{split}
\]
where
\[
\widetilde{g}(r)=\frac{1}{2}r^2
  -\frac{\mu}{q}a^{q(1-\delta_{q,s})}C_{q,s}r^{q\delta_{q,s}}-\frac{\tau(r)}{2^*_s}S^{-\frac{2^*_s}{2}}r^{2^*_s}.
\]
Then, by the definition of $\tau(\cdot)$, when $a\in
(0,(\frac{\beta}{\mu})^{\frac{1}{q(1-\delta_{q,s})}})$, we have
\[\widetilde{g}(r)<0,~~ \forall
r\in(0,R_1);~~~\widetilde{g}(r)>0,~~\forall r\in (R_1,+\infty).\] In
what follows, we always assume that $a\in
(0,(\frac{\beta}{\mu})^{\frac{1}{q(1-\delta_{q,s})}})$. Without loss
of generality, in the sequel, we may assume that
\begin{equation}\label{e4.8}
\frac{1}{2}r^2-\frac{1}{2^*_s}S^{-\frac{2^*_s}{2}}r^{2^*_s}
\geq0,~\forall~r\in[0,R_1]
\end{equation}
and
\begin{equation}\label{e4.9}
R_1<S^{\frac{3}{4s}}.\end{equation}

\begin{lemma}\label{Lemma 4.3} The functional $I_{\mu,\tau}$ has the following  characteristics:
 \begin{itemize}\item[(i)]$I_{\mu,\tau} \in C^1\left(H^s_{rad}(\R^3),\R\right);$
 \item[(ii)]$I_{\mu,\tau}$ is coercive and bounded from below on $S_{r,a}$. Moreover, if $I_{\mu,\tau}(u)\leq0$, then $\|(-\Delta)^{\frac{s}{2}}u\|_2\leq R_1$ and $I_{\mu,\tau}(u)=I(u)$;
  \item[(iii)] $I_{\mu,\tau}|_{S_{r,a}}$ satisfies the $(PS)_c$ condition for all
  $c<0,$ provided that $\mu>\mu^*_1>0$ large.
 \end{itemize} \end{lemma}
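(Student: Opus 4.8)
The plan is to establish the three items in turn; (i) and (ii) are routine and the content lies in the compactness statement (iii).

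\textbf{Items (i)--(ii).} For (i), $I_{\mu,\tau}$ is a finite sum of $C^1$ functionals on $H^s(\R^3)$: the Gagliardo seminorm is smooth, the nonlocal term $u\mapsto\int_{\R^3}\phi_u^tu^2\,dx$ is $C^1$ by the Hardy--Littlewood--Sobolev inequality (Proposition \ref{prop 3.1}), and the superposition terms are $C^1$ by standard Nemytskii-operator arguments; the truncated term is a product of these with the composition $u\mapsto\tau(\|(-\Delta)^{s/2}u\|_2)$, which is $C^1$ because $\tau\in C^\infty$ with $\tau\equiv1$ (hence $\tau'\equiv0$) on $[0,R_1]$, so no regularity is lost near $u=0$. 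For (ii), the pointwise bound $I_{\mu,\tau}(u)\ge\widetilde g(\|(-\Delta)^{s/2}u\|_2)$ obtained before the statement yields coercivity and lower boundedness, since $\widetilde g(r)=\tfrac12r^2-\tfrac{\mu}{q}a^{q(1-\delta_{q,s})}C_{q,s}r^{q\delta_{q,s}}\to+\infty$ as $r\to\infty$ (because $\tau\equiv0$ for $r\ge R_2$ and $q\delta_{q,s}<2$) and $\widetilde g$ is continuous; and since $\widetilde g<0$ on $(0,R_1)$ while $\widetilde g>0$ on $(R_1,+\infty)$, the inequality $I_{\mu,\tau}(u)\le0$ forces $\|(-\Delta)^{s/2}u\|_2\le R_1$, where $\tau\equiv1$, hence $I_{\mu,\tau}(u)=I_\mu(u)$.

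\textbf{Reduction for (iii).} Let $\{u_n\}\subset S_{r,a}$ be a $(PS)_c$ sequence with $c<0$. By coercivity it is bounded in $H^s_{rad}(\R^3)$, so along a subsequence $u_n\rightharpoonup u$ in $H^s_{rad}$, $u_n\to u$ in $L^p(\R^3)$ for all $p\in(2,2^*_s)$ (Lemma \ref{Lemma 3.6}) and a.e., and $\int_{\R^3}\phi_{u_n}^tu_n^2\,dx\to\int_{\R^3}\phi_u^tu^2\,dx$, $\int_{\R^3}\phi_{u_n}^tu_n\varphi\,dx\to\int_{\R^3}\phi_u^tu\varphi\,dx$ (Lemma \ref{Lemma 3.3}). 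Since $I_{\mu,\tau}(u_n)\to c<0$, item (ii) gives $\|(-\Delta)^{s/2}u_n\|_2\le R_1$ and $I_{\mu,\tau}(u_n)=I_\mu(u_n)$ for $n$ large; moreover, as $\tau'\equiv0$ on $[0,R_1]$ the differentials of $I_{\mu,\tau}$ and $I_\mu$ coincide there, so for such $n$ the sequence is a bounded $(PS)_c$ sequence for $I_\mu|_{S_{r,a}}$, and the Lagrange multiplier rule yields $\alpha_n\in\R$ with $I_\mu'(u_n)-\alpha_nu_n\to0$ in $H^{-s}(\R^3)$; testing with $u_n$ and using $\|(-\Delta)^{s/2}u_n\|_2\le R_1$ together with \eqref{e3.1}, \eqref{e3.2}, \eqref{e3.3} shows $\{\alpha_n\}$ is bounded, so $\alpha_n\to\alpha$ after a further subsequence.

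\textbf{Compactness of the critical term and nontriviality.} Next I would apply Lemma \ref{Lemma 4.1} to $\{u_n\}$ in $D^{s,2}(\R^3)$, writing $|(-\Delta)^{s/2}u_n|^2\rightharpoonup\omega$, $|u_n|^{2^*_s}\rightharpoonup\zeta$ with \eqref{e4.1}--\eqref{e4.3}. Localising $\langle I_\mu'(u_n)-\alpha_nu_n,\psi_\varepsilon u_n\rangle\to0$ around a concentration point $z_j$ and using that the $L^q$-, the $L^2$- and the nonlocal terms do not concentrate (Rellich's theorem and Lemma \ref{Lemma 3.3}) gives $\omega_j\le\zeta_j$, so by \eqref{e4.3} either $\zeta_j=0$ or $\zeta_j\ge S^{\frac{3}{2s}}$; but $\sum_j\omega_j\le\liminf_n\|(-\Delta)^{s/2}u_n\|_2^2\le R_1^2<S^{\frac{3}{2s}}$ by \eqref{e4.9}, so every $\zeta_j=0$, and the analogous localisation at infinity (with Lemma \ref{Lemma 4.2} and the compact radial embedding) gives $\zeta_\infty=0$. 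Hence $\int_{\R^3}|u_n|^{2^*_s}\,dx\to\int_{\R^3}|u|^{2^*_s}\,dx$ and, together with $u_n\rightharpoonup u$ in $L^{2^*_s}$, $u_n\to u$ in $L^{2^*_s}(\R^3)$. Passing to the limit, $u$ solves \eqref{e1.7} with the multiplier $\alpha$; and $u\neq0$, since otherwise $\int|u_n|^q,\int|u_n|^{2^*_s},\int\phi_{u_n}^tu_n^2\to0$ force $c=\lim I_\mu(u_n)=\tfrac12\lim\|(-\Delta)^{s/2}u_n\|_2^2\ge0$, a contradiction.

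\textbf{Conclusion --- and the main obstacle.} Testing $I_\mu'(u_n)-\alpha_nu_n\to0$ with $u_n-u$ and using the strong $L^q$- and $L^{2^*_s}$-convergence together with Lemma \ref{Lemma 3.3} leaves $\|(-\Delta)^{s/2}(u_n-u)\|_2^2=\alpha(a^2-\|u\|_2^2)+o(1)$; since the left-hand side is nonnegative, the whole matter reduces to proving $\alpha<0$, which is exactly where the hypothesis $\mu>\mu^*_1$ enters. For this step one uses that, by regularity, $u\in L^\infty(\R^3)$, so that testing the equation for $u$ with $u$ and subtracting the Pohozaev identity of Proposition \ref{prop 3.4} (equivalently $P_\mu(u)=0$, Lemma \ref{Lemma 3.5}) gives
\[
\alpha\|u\|_2^2=\frac{4s+2t-3}{4s}\,\lambda\int_{\R^3}\phi_u^tu^2\,dx-\mu(1-\delta_{q,s})\int_{\R^3}|u|^q\,dx ,
\]
with $\frac{4s+2t-3}{4s}>0$ and both integrals strictly positive since $u\neq0$. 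The first integral is bounded from above, uniformly in $\mu$, via \eqref{e3.3}, \eqref{e3.2} and the a priori bounds $\|(-\Delta)^{s/2}u\|_2\le R_1$, $\|u\|_2\le a$, while $P_\mu(u)=0$ combined with $\|(-\Delta)^{s/2}u\|_2\le R_1<S^{\frac{3}{4s}}$ (so that $S^{-2^*_s/2}\|(-\Delta)^{s/2}u\|_2^{2^*_s-2}<1$) and \eqref{e4.8} keeps $\mu\int|u|^q$ bounded away from $0$; a careful bookkeeping then gives $\alpha<0$ once $\mu>\mu^*_1$ is large enough. With $\alpha<0$, the identity $\|(-\Delta)^{s/2}(u_n-u)\|_2^2=\alpha(a^2-\|u\|_2^2)+o(1)\ge0$ forces $\|u\|_2=a$ and $\|(-\Delta)^{s/2}(u_n-u)\|_2\to0$, i.e. $u_n\to u$ in $H^s_{rad}(\R^3)$, which is the $(PS)_c$ condition. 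The genuine difficulty throughout is that $H^s_{rad}(\R^3)\hookrightarrow L^2(\R^3)$ is not compact, so a part of the mass could a priori escape to infinity in a spread-out radial profile; this is excluded only by combining the no-concentration gap $R_1<S^{\frac{3}{4s}}$ coming from the critical term, the Pohozaev identity, and the size restriction on $\mu$.
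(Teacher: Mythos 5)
Your items (i)--(ii), the reduction of a $(PS)_c$ sequence with $c<0$ to a bounded $(PS)_c$ sequence for $I_\mu$ with $\|(-\Delta)^{s/2}u_n\|_2\le R_1$, the concentration--compactness argument ruling out concentration at points and at infinity via $R_1<S^{3/(4s)}$, and the nontriviality of $u$ all match the paper's proof. Your closing step (testing $I_\mu'(u_n)-\alpha_nu_n\to0$ against $u_n-u$ to get $\|(-\Delta)^{s/2}(u_n-u)\|_2^2=\alpha(a^2-\|u\|_2^2)+o(1)$ and using $\alpha<0$) is a clean, valid alternative to the paper's Fatou-lemma argument in \eqref{e4.37}--\eqref{e4.40}, and your identity $\alpha\|u\|_2^2=\tfrac{4s+2t-3}{4s}\lambda\int\phi_u^tu^2-\mu(1-\delta_{q,s})\int|u|^q$ agrees with \eqref{e4.36}.

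The gap is in your mechanism for $\alpha<0$. You propose to bound $\mu\int_{\R^3}|u|^q\,dx$ away from zero by combining $P_\mu(u)=0$ with $\|(-\Delta)^{s/2}u\|_2\le R_1<S^{3/(4s)}$ and \eqref{e4.8}. Either route you could mean fails to exploit the hypothesis that $\mu$ is large. From $P_\mu(u)=0$ one gets $s\mu\delta_{q,s}\int|u|^q\ge s\|u\|^2\bigl(1-S^{-2^*_s/2}R_1^{2^*_s-2}\bigr)=:sc_0\|u\|^2$, i.e.\ $\mu\int|u|^q\ge c_0\|u\|^2/\delta_{q,s}$ --- the factor $\mu$ has cancelled, so increasing $\mu$ does not increase this lower bound, while the term it must dominate satisfies $\int\phi_u^tu^2\le C\|u\|^{(3-2t)/s}a^{(2t+4s-3)/s}$ with $(3-2t)/s<2$ (since $2s+2t>3$), so for $\|u\|$ small the nonlocal term beats $\|u\|^2$ and no fixed $\mu^*_1$ closes the inequality. (The alternative reading, $\tfrac{\mu}{q}\int|u|^q\ge|c|$ from \eqref{e4.8} and $c<0$, also gives a $\mu$-independent lower bound on the product, and moreover one that degenerates as $c\to0^-$.) What is actually needed is a lower bound $Q_1>0$ on $\int|u_n|^q\,dx$ \emph{itself}, uniform in $n$ and independent of $\mu$, together with the upper bound $\int\phi_{u_n}^tu_n^2\,dx\le Q_2$ from \eqref{e3.3} and \eqref{e3.2}; then $\mu\int|u|^q\ge\mu Q_1$ grows linearly in $\mu$ and eventually dominates $\lambda\tfrac{4s+2t-3}{4s}Q_2$, which is exactly how the paper defines $\mu^*_1$ in \eqref{e4.32}--\eqref{e4.35}. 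You should replace your Pohozaev-based lower bound by such a direct lower bound on $\int|u_n|^q$ (and justify its uniformity) before the ``careful bookkeeping'' can deliver $\alpha<0$ for $\mu>\mu^*_1$.
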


\begin{proof} We can obtain conclusions $(i)$ and $(ii)$ by   a standard argument.
 To prove item  $(iii)$,  let $\{u_n\}$ be a $(PS)_c$ sequence of $I_{\mu,\tau}$ restricted to $S_{r,a}$ with $c < 0$.  By $(ii)$, we see that
$\|(-\Delta)^{\frac{s}{2}}u_n\|_2\leq R_1$ for large $n$, and thus
  $\{u_n\}$ is  a $(PS)_c$ sequence of $I|_{S_{r,a}}$ with $c < 0$;
i.e., $I(u_n)\rightarrow c<0$ and $\|I|'_{S_{r,a}}(u_n)\|\rightarrow
0$ as $n\rightarrow\infty$. Then, $\{u_n\}$ is bounded in
$H^s_{rad}(\R^3)$. Therefore, up to a subsequence, there exists $u
\in H^s_{rad}(\R^3)$ such that $u_n \rightharpoonup u$ in $
H^s_{rad}(\R^3)$ and $u_n \rightarrow u$ in $L^p(\R^3)$ for
$2<p<2^*_{s}$ and $u_n(x) \rightarrow u(x)$ a.e. on $\R^3$. From
$2<q<2+\frac{4s}{3}<2^*_{s}$ and Lemma \ref{Lemma 3.3}, we infer to
\[
\lim_{n\rightarrow\infty}\int_{\R^3}|u_n|^qdx=\int_{\R^3}|u|^qdx,~~~\int_{\R^3}\phi_{u_n}^tu_n^2dx\rightarrow\int_{\R^3}\phi^t_uu^2dx.
\]
Moreover, we have that $u\not\equiv0$. Indeed, assume by
contradiction that, $u \equiv0$, then
$\lim_{n\rightarrow\infty}\int_{\R^3}|u_n|^qdx=0$. From \eqref{e4.8}
and the definition of $I_{\mu,\tau}$, we infer that
\[\begin{split}
0>c=&\lim_{n\rightarrow\infty}I_{\mu,\tau}(u_n)=\lim_{n\rightarrow\infty}I_{\mu}(u_n)\\
=&\lim_{n\rightarrow\infty}\bigg[\frac{1}{2}\int_{\R^3}|(-\Delta
)^\frac{s}{2} u_n|^2dx+\frac{\lambda}{4}
\int_{\R^3}\phi_{u_n}^tu_n^2dx-\frac{\mu}{q}\int_{\R^3}|u_n|^qdx-\frac{1}{2^*_s}\int_{\R^3}|u_n|^{2^*_s}dx\bigg]\\
\geq& \lim_{n\rightarrow\infty}\bigg[\frac{1}{2}\|(-\Delta
)^\frac{s}{2} u_n\|^2_2 -\frac{1}{2^*_s}S^{-\frac{2^*_s}{2}}\|(-\Delta )^\frac{s}{2} u_n\|_2^{2^*_s}-\frac{\mu}{q}\int_{\R^3}|u_n|^qdx\bigg]\\
 \geq&-\frac{\mu}{q} \lim_{n\rightarrow\infty}\int_{\R^3}|u_n|^qdx=0,
\end{split}
\]
which is absurd. On the other hand, setting the function $\Theta(v):
H^s_{rad}(\R^3)\rightarrow\R$  by \[
\Theta(v)=\frac{1}{2}\int_{\R^3}|v|^2dx,
\]
it follows that $S_a=\Theta^{-1}(\{\frac{a^2}{2}\})$.  Then, by
Proposition 5.12 in \cite{Willem},  there exists $\alpha_n\in \R$
such that
\[
\|I'_{\mu}(u_n)-\alpha_n\Theta'(u_n)\|\rightarrow 0,~~\mbox{as}~~
n\rightarrow\infty.
\]
 Hence, we have that
\begin{equation}\label{e4.10}
(-\Delta)^su_n+\phi_{u_n}^tu_n-\mu|u_n|^{q-2}u_n-|u_n|^{2^*_s-2}u_n=\alpha_n
u_n+o_n(1) ~~\mbox{in}~H^{-s}_{rad}(\R^3),
\end{equation}
where $H^{-s}_{rad}(\R^3)$ is the dual space of $H^s_{rad}(\R^3)$.
Thus,   we have   for $\varphi \in H^s_{rad}(\R^3)$,   that
\begin{equation}\label{e4.11}\begin{split}
&\int_{\R^3}(-\Delta)^\frac{s}{2}u_n(-\Delta)^\frac{s}{2}\varphi dx+
\int_{\R^3}\phi_{u_n}^t u_n\varphi dx-\mu\int_{\R^3}|u_n|^{q-2}u_n\varphi dx-\int_{\R^3}|u_n|^{2^*_s-2}u_n\varphi dx\\
&\hspace{1.5cm} =\alpha_n \int_{\R^3}u_n \varphi
 dx+o_n(1),
\end{split}
\end{equation}
and if we choose $\varphi=u_n$, we get \be\label{e4.12} \|(-\Delta
)^\frac{s}{2} u_n\|^2_2+\lambda\int_{\R^3}\phi_{u_n}^t
u_n^2dx-\mu\int_{\R^3}|u_n|^{q}dx-\int_{\R^3}|u_n|^{2^*_s}dx=\alpha_n
 \int_{\R^3}u_n^2dx+o_n(1). \ee
 From  \eqref{e4.12},  and the boundedness
of $\{u_n\}$ in $D^{s,2}(\R^3)$, we can deduce that $\{\alpha_n\}$
is bounded in $\R$.  Then we can assume that, up to a subsequence,
$\alpha_n \rightarrow \alpha $ for some $\alpha\in \R.$ Then, by
\eqref{e4.11}, we can derive that $u$ solves the following equation
\begin{equation}\label{e4.13}
(-\Delta)^su+\phi_{u}u-\mu|u|^{q-2}u-|u|^{2^*_s-2}u=\alpha u.
\end{equation}
Indeed, for any $\varphi \in H^s_{rad}(\R^3)$, it follows by  $u_n
\rightharpoonup u$ in $ H^s_{rad}(\R^3)$ and  $\alpha_n \rightarrow
\alpha$,  that
\[
\int_{\R^3}(-\Delta)^\frac{s}{2}u_n(-\Delta)^\frac{s}{2}\varphi
dx\rightarrow
\int_{\R^3}(-\Delta)^\frac{s}{2}u(-\Delta)^\frac{s}{2}\varphi
dx;~~\mbox{and}~~~\alpha_n \int_{\R^3}u_n \varphi dx\rightarrow
\alpha\int_{\R^3}u \varphi dx.
\]as $ n\rightarrow\infty.$
  Since $\{|u_n|^{2^*_s-2}u_n\}$ is bounded in
 $L^\frac{2^*_s}{2^*_s-1}(\R^3), \{|u_n|^{q-2}u_n\}$ is bounded in
 $L^\frac{2^*_s}{q-1}(\R^3)$,  and $u_n(x)\rightarrow u(x)$
 a.e. on $\R^3$, we obtain that
\[|u_n|^{2^*_s-2}u_n\rightharpoonup |u|^{2^*_s-2}u~~\mbox{
in}~~L^\frac{2^*_s}{2^*_s-1}(\R^3),~~~\mbox{and}~~~|u_n|^{q-2}u_n\rightharpoonup
|u|^{q-2}u~~\mbox{ in}~~L^\frac{2^*_s}{2^*_s-q+1}(\R^3),\] and so,
\[
\int_{\R^3}|u_n|^{2^*_s-2}u_n\varphi dx\rightarrow
\int_{\R^3}|u|^{2^*_s-2}u\varphi
dx~~\mbox{and}~~\int_{\R^3}|u_n|^{q-2}u_n\varphi dx\rightarrow
\int_{\R^3}|u|^{q-2}u\varphi dx,\]as $ n\rightarrow\infty.$ Recall
from Lemma \ref{Lemma 3.3} that
\[\int_{\R^3}\phi_{u_n}^tu_n\varphi dx\rightarrow
\int_{\R^3}\phi_{u}u\varphi dx,~~\forall\varphi\in
H^s_{rad}(\R^3).\]Thus, we have
\begin{equation}\label{e4.14}\begin{split}
&\int_{\R^3}(-\Delta)^\frac{s}{2}u(-\Delta)^\frac{s}{2}\varphi dx+
\int_{\R^3}\phi_{u}^t u\varphi dx-\mu\int_{\R^3}|u|^{q-2}u\varphi dx-\int_{\R^3}|u|^{2^*_s-2}u\varphi dx\\
&\hspace{1.5cm} =\alpha \int_{\R^3}u \varphi
 dx.
\end{split}
\end{equation}
Therefore, $u$ solves equation \eqref{e4.13}.
\par In the sequel, by the concentration-compactness principle, we
can prove that
\begin{equation}\label{e4.15}
\int_{\R^3}|u_n|^{2^*_s}dx\rightarrow
\int_{\R^3}|u|^{2^*_s}dx.\end{equation} In fact, since
$\|(-\Delta)^{\frac{s}{2}}u_n\|_2\leq  R_1$ for $n$ large enough, by
 Lemma \ref{Lemma 4.1}, there exist
 two positive measures, $\zeta, \omega \in \mathcal{M}(\R^3)$, such that
\begin{equation}\label{e4.16}
|(-\Delta)^{\frac{s}{2}}u_n|^2\rightharpoonup
\omega,~~|u_n|^{2^*_s}\rightharpoonup \zeta~~ \mbox{in}~
\mathcal{M}(\R^3)
\end{equation}
as $n\rightarrow\infty$. Then,   by Lemma  \ref{Lemma 4.1}, either
$u_n\rightarrow u$ in $L_{loc}^{2^*_s}(\R^3)$ or there exists a (at
most countable) set of distinct points $\{x_j\}_{j\in J}\subset\R^3$
and positive numbers $\{\zeta_j\}_{j\in J}$ such that \[
\zeta=|u|^{2^*_s}+\sum_{j\in J}\zeta_j\delta_{x_j}.\] Moreover,
there exist some at most a countable set $J\subseteq\mathbb{N}$, a
corresponding set of distinct points  $\{x_j\}_{j\in J}\subset
\R^3$, and two sets of  positive numbers  $\{\zeta_j\}_{j\in J}$ and
$\{\omega_j\}_{j\in J}$ such that items \eqref{e4.1}-\eqref{e4.3}
holds. Now, assume that $J\neq \emptyset.$
 We split the proof into three steps.

\par {\em Step 1.} We prove that $\omega_j=\zeta_j$, where
$\omega_j$,  and $\zeta_j$ come from Lemma  \ref{Lemma 4.1}.

Define $\varphi\in C_0^\infty(\R^3 ) $ as a cut-off function with
$\varphi \in [0,1]$, $\varphi \equiv 1$ in $B_{1/2}(0)$, $\varphi
\equiv 0$ in $\R^3\setminus B_1(0)$. For any $\rho> 0$, define
\[
\varphi_\rho(x):=\varphi\left(\frac{x-x_j}{\rho}\right)=\left\{\begin{array}{ll}1,&|x-x_j|\leq\frac{1}{2}\rho,\vspace{0.2cm}\\
0,&|x-x_j|\geq\rho.
\end{array}\right.\]
By the boundedness of $\{u_n\}$ in $H^s_{rad}(\R^3)$, we have  that
$\{\varphi_\rho u_n\}$ is also bounded in $H^s_{rad}(\R^3)$. Thus,
one has that
\begin{equation}\label{e4.17}
\begin{split}
o_n(1)=&\langle I_{\mu}'(u_n), u_n\varphi_\rho\rangle\\
=&\int_{\R^3}(-\Delta)^\frac{s}{2}u_n(-\Delta)^\frac{s}{2}(
u_n\varphi_\rho)dx+\lambda
\int_{\R^3}\phi_{u_n}^t u_n\varphi_\rho dx-\mu\int_{\R^3} |u_n|^{q}\varphi_\rho dx\\
&-\int_{\R^3}|u_n|^{2^*_s}\varphi_\rho dx.
\end{split}
\end{equation}
 It is easy to check  that
\be\label{e4.18}\begin{split}
&\int_{\R^3}(-\Delta)^\frac{s}{2}u_n(-\Delta)^\frac{s}{2}(u_n\varphi_\rho )dx\\
&=\iint_{\R^{6}}\frac{[u_n(x)-u_n(y)]|u_n(x)-u_n(y)|^2[u_n(x)\varphi_\rho(x) - u_n(y)\varphi_\rho(y)]}{|x-y|^{3+2s}}dxdy\\
&=\iint_{\R^{6}}\frac{|u_n(x)-u_n(y)|^2\varphi_\rho(y)}{|x-y|^{3+2s}}dxdy
+\iint_{\R^{6}}\frac{[u_n(x)-u_n(y)][\varphi_\rho(x) -\varphi_\rho(y)]u_n(x)}{|x-y|^{3+2s}}dxdy\\
&:=T_1+T_2,
\end{split}
\ee where
\[
T_1=\iint_{\R^{6}}\frac{|u_n(x)-u_n(y)|^2\varphi_\rho (y)
}{|x-y|^{3+2s}}dxdy
\]
and
\[
T_2=\iint_{\R^{6}}\frac{[u_n(x)-u_n(y)][\varphi_\rho
(x)-\varphi_\rho (y)]u_n(x)}{|x-y|^{3+2s}}dxdy.
\]
\par For $T_1$, by \eqref{e4.16}, we obtain
\be\label{e4.19}\begin{split}
\lim_{\rho\rightarrow0}\lim_{n\rightarrow\infty}T_1=& \lim_{\rho\rightarrow0}\lim_{n\rightarrow\infty} \iint_{\R^{6}}\frac{|u_n(x)-u_n(y)|^2\varphi_\rho (y) }{|x-y|^{3+2s}}dxdy\\
 =&\lim_{\rho\rightarrow0} \int_{\R^{3}}\varphi_\rho d\omega=\omega(\{x_j\})=\omega_j.
\end{split}
\ee From   H\"{o}lder's inequality, we have
\[\begin{split}
T_2=& \iint_{\R^{6}}\frac{[u_n(x)-u_n(y)][\varphi_\rho (x)-\varphi_\rho (y)]u_n(x)}{|x-y|^{3+2s}}dxdy\\
\leq& \left(\iint_{\R^{6}}\frac{|\varphi_\rho (x)-\varphi_\rho
(y)|^2|u_n(x)|^2}{|x-y|^{3+2s}}dxdy\right)^\frac{1}{2}
\left(\iint_{\R^{6}}\frac{|u_n(x)-u_n(y)|^2}{|x-y|^{3+2s}}dxdy\right)^\frac{1}{2}\\
\leq& C\left(\iint_{\R^{6}}\frac{|\varphi_\rho (x)-\varphi_\rho
(y)|^2|u_n(x)|^2}{|x-y|^{3+2s}}dxdy\right)^\frac{1}{2}.
\end{split}
\]
Analogously to the proof of Lemma 3.4 in \cite{ZZR}, we obtain
\[
\lim_{\rho\rightarrow0}\lim_{n\rightarrow\infty}\iint_{\R^{6}}\frac{|\varphi_\rho
(x)-\varphi_\rho (y)|^2|u_n(x)|^2}{|x-y|^{3+2s}}dxdy=0,
\]
and
\[
\lim_{\rho\rightarrow0}\lim_{n\rightarrow\infty}\int_{\R^3}(-\Delta)^{\frac{s}{2}}u_n
(-\Delta)^{\frac{s}{2}}(u_n\varphi_\rho )
dx=\omega(\{x_j\})=\omega_j.
\]
Again   by \eqref{e4.16}, we have
\begin{equation} \label{e4.20}
\lim_{\rho\rightarrow0}\lim_{n\rightarrow\infty}\int_{\R^3}
|u_n|^{2^*_s}\varphi_\rho
 dx=\lim_{\rho\rightarrow0}\int_{\R^3}\varphi_\rho
d\zeta=\zeta(\{x_j\})=\zeta_j.
\end{equation}
By   the definition of $\varphi_\rho$, and the absolute continuity
of the Lebesgue integral, one has that
\begin{equation} \label{e4.21}
\lim_{\rho\rightarrow0}\lim_{n\rightarrow\infty}\int_{\R^3}
|u_n|^{q}\varphi_\rho
dx=\lim_{\rho\rightarrow0}\int_{\R^3}|u|^{q}\varphi_\rho dx
=\lim_{\rho\rightarrow0}\int_{|x-x_j|\leq\rho} |u|^{q}\varphi_\rho
dx=0.
\end{equation}
  Thus, by     Proposition \ref{prop 3.1} and Lemma \ref{Lemma 3.6},  we have
\begin{equation} \label{e4.22}
\begin{split}
\int_{\R^3}\phi_{u_n}^t u_n^2\varphi_\rho dx&\leq
C\left(\int_{\R^3}|u_n|^{\frac{12}{3+2t}}dx\right)^{\frac{3+2t}{6}}\left(\int_{\R^3}|u_n^2\varphi_\rho|^{\frac{6}{3+2t}}dx\right)^{\frac{3+2t}{6}}\\
&\leq
 C\|u_n\|^2_H\left(\int_{\R^3}|u_n|^{\frac{12}{3+2t}}|\varphi_\rho|^{\frac{6}{3+2t}}dx\right)^{\frac{3+2t}{6}}\\
 &\leq C_1
 \left(\int_{\R^3}|u_n|^{\frac{12}{3+2t}}\varphi_\rho dx\right)^{\frac{3+2t}{6}}.
\end{split}\end{equation}
Therefore, \begin{equation} \label{e4.23}
\begin{split}
\lim_{\rho\rightarrow0}\lim_{n\rightarrow\infty}\int_{\R^3}\phi_{u_n}^t
u_n^2\varphi_\rho dx&\leq
\lim_{\rho\rightarrow0}\lim_{n\rightarrow\infty} C_1
 \left(\int_{\R^3}|u_n|^{\frac{12}{3+2t}}\varphi_\rho
 dx\right)^{\frac{3+2t}{6}}\\
&= \lim_{\rho\rightarrow0} C_1
 \left(\int_{\R^3}|u|^{\frac{12}{3+2t}}\varphi_\rho
 dx\right)^{\frac{3+2t}{6}}\\
 &=\lim_{\rho\rightarrow0} C_1
 \left(\int_{_{|x-x_j|\leq\rho}}|u|^{\frac{12}{3+2t}}\varphi_\rho
 dx\right)^{\frac{3+2t}{6}}=0.\end{split}
\end{equation}
Summing up, from \eqref{e4.17}-\eqref{e4.19} and \eqref{e4.21},
taking the limit as $n\rightarrow\infty$, and then the limit as
$\rho\rightarrow0$, we arrive at
\[
\omega_j=\zeta_j.
\]

\par {\em Step 2.}  We show that
$\omega_\infty=\zeta_\infty$, where $\omega_\infty$ and
 $\zeta_\infty$  are given in Lemma  \ref{Lemma 4.2}. Let $\psi\in C_0^\infty(\R^3 ) $ be a cut-off function with
$\psi \in [0,1]$, $\psi \equiv 0$ in $B_{1/2}(0)$, $\psi \equiv 1$
in $\R^3\setminus B_1(0)$. For any $R> 0$, define
\[
\psi_R(x):=\psi\left(\frac{x}{R}\right)=\left\{\begin{array}{ll}0,&|x|\leq\frac{1}{2}R,\vspace{0.2cm}\\
1,&|x|\geq R.
\end{array}\right.\]
Using  again  the boundedness of $\{u_n\}$ and $\{ u_n\psi_R\}$ in
$H^s_{rad}(\R^3)$,   we have
\begin{equation}\label{e4.24}
\begin{split}
o_n(1)=&\langle I_{\mu}'(u_n), u_n\psi_R\rangle\\
=&\int_{\R^3}(-\Delta)^\frac{s}{2}u_n(-\Delta)^\frac{s}{2}(u_n\psi_R)dx+\lambda\int_{\R^3}\phi_{u_n}^t u_n^2\psi_R dx-\mu\int_{\R^3} |u_n|^{q}\psi_R dx\\
&-\int_{\R^3}|u_n|^{2^*_s}\psi_R dx.
\end{split}
\end{equation}
It is easy to derive  that
\[\begin{split}
&\int_{\R^3}(-\Delta)^{\frac{s}{2}}u_n (-\Delta)^{\frac{s}{2}}(u_n\psi_R ) dx\\
&=\iint_{\R^{6}}\frac{[u_n(x)-u_n(y)][u_n(x)\psi_R(x)-u_n(y)\psi_R (y)]}{|x-y|^{3+2s}}dxdy\\
&= \iint_{\R^{6}}\frac{|u_n(x)-u_n(y)|^2\psi_R (y)
}{|x-y|^{3+2s}}dxdy+
\iint_{\R^{6}}\frac{[u_n(x)-u_n(y)][\psi_R  (x)-\psi_R (y)]u_n(x)}{|x-y|^{3+2s}}dxdy\\
& :=T_3+T_4,
\end{split}
\]
where
\[
T_3=\iint_{\R^{6}}\frac{|u_n(x)-u_n(y)|^2\psi_R  (y)
}{|x-y|^{3+2s}}dxdy
\]
and
\[
T_4=\iint_{\R^{6}}\frac{[u_n(x)-u_n(y)][\psi_R  (x)-\psi_R
(y)]u_n(x)}{|x-y|^{3+2s}}dxdy.
\]
For $T_3$, by \eqref{e4.16} and Lemma \ref{Lemma 4.2}, we  infer to
\[\begin{split}
\lim_{R\rightarrow\infty}\lim_{n\rightarrow\infty}T_3=
\lim_{R\rightarrow\infty}\lim_{n\rightarrow\infty}
\iint_{\R^{6}}\frac{|u_n(x)-u_n(y)|^2\psi_R  (y)
}{|x-y|^{3+2s}}dxdy=\omega_\infty.\end{split}
\]
By virtue of    H\"{o}lder's inequality, we get
\[\begin{split}
T_4=& \iint_{\R^{6}}\frac{[u_n(x)-u_n(y)][\psi_R (x)-\psi_R  (y)]u_n(x)}{|x-y|^{3+2s}}dxdy\\
\leq& \left(\iint_{\R^{6}}\frac{|\psi_R  (x)-\psi_R
(y)|^2|u_n(x)|^2}{|x-y|^{3+2s}}dxdy\right)^\frac{1}{2}
\left(\iint_{\R^{6}}\frac{|u_n(x)-u_n(y)|^2}{|x-y|^{3+2s}}dxdy\right)^\frac{1}{2}\\
\leq& C\left(\iint_{\R^{6}}\frac{|\psi_R  (x)-\psi_R
(y)|^2|u_n(x)|^2}{|x-y|^{3+2s}}dxdy\right)^\frac{1}{2}.
\end{split}
\]
Combining the above proof, we conclude that
\[\begin{split}
&\lim_{R\rightarrow\infty}\lim_{n\rightarrow\infty}\iint_{\R^{6}}\frac{|\psi_R (x)-\psi_R (y)|^2|u_n(x)|^2}{|x-y|^{3+2s}}dxdy\\
&=\lim_{R\rightarrow\infty}\lim_{n\rightarrow\infty}\iint_{\R^{6}}\frac{|[1-\psi_R
(x)]-[1-\psi_R (y)]|^2|u_n(x)|^2}{|x-y|^{3+2s}}dxdy=0.
\end{split}
\]
Hence,
\[
\lim_{R\rightarrow\infty}\lim_{n\rightarrow\infty}\iint_{\R^{6}}(-\Delta)^{\frac{s}{2}}u_n
(-\Delta)^{\frac{s}{2}}(u_n\psi_R) dx=\omega_\infty.
\]
By Lemma \ref{Lemma 4.2}, we  have
\begin{equation}\label{e4.25}
\lim_{R\rightarrow\infty}\lim_{n\rightarrow\infty}\int_{\R^3}
|u_n|^{2^*_s}\psi_R  dx=\zeta_\infty.
\end{equation}
 Analogous  the proof of Lemma
3.3 in \cite{ZZR}, we infer to
\begin{equation}\label{e4.26}
\lim_{R\rightarrow\infty}\lim_{n\rightarrow\infty}\int_{\R^3}
|u_n|^{q}\psi_R dx=\lim_{R\rightarrow\infty}\int_{\R^3}
|u|^{q}\psi_R dx=\lim_{R\rightarrow\infty}\int_{|x|>\frac{1}{2}R}
|u|^{q}\psi_R dx=0.
\end{equation}
Moreover, we can obtain\begin{equation} \label{e4.27}
\begin{split}
\lim_{R\rightarrow\infty}\lim_{n\rightarrow\infty}\int_{\R^3}\phi_{u_n}^t
u_n^2\psi_R dx&\leq
\lim_{R\rightarrow\infty}\lim_{n\rightarrow\infty}C_1
 \left(\int_{\R^3}|u_n|^{\frac{12}{3+2t}}\psi_R
 dx\right)^{\frac{3+2t}{6}}\\
&= \lim_{R\rightarrow\infty} C_1
 \left(\int_{\R^3}|u|^{\frac{12}{3+2t}}\psi_R
 dx\right)^{\frac{3+2t}{6}}\\
 &=\lim_{R\rightarrow\infty} C_1
 \left(\int_{_{|x|\geq R/2}}|u|^{\frac{12}{3+2t}}\psi_R
 dx\right)^{\frac{3+2t}{6}}=0.\end{split}
\end{equation}
Summing up, from \eqref{e4.24}-\eqref{e4.27}, taking the limit as
$n\rightarrow\infty$, and then the limit as $R\rightarrow\infty$, we
have
\[
\omega_\infty=\zeta_\infty.\]

\par {\em Step 3.}  We claim that $\zeta_j=0$ for any $j\in J$
and $\zeta_\infty=0.$
\par  Suppose  by contradiction that, there exists $j_0\in J$ such that
$\zeta_{j_0}>0$ or $\zeta_\infty>0$. Step 1, Step 2, and Lemmas
 \ref{Lemma 4.1}, \ref{Lemma 4.2} imply that
\begin{equation}\label{e4.28}
 \zeta_{j_0}\leq (S^{-1}\omega_{j_0})^\frac{ 2^*_s}{2}=(S^{-1} \zeta_{j_0})^\frac{ 2^*_s}{2},
\end{equation}
and
\begin{equation}\label{e4.29}
\zeta_\infty=(S^{-1}\omega_\infty)^\frac{2^*_s}{2}=(S^{-1}\zeta_\infty)^\frac{2^*_s}{2}.
\end{equation}
Consequently, we get $ \zeta_{j_0}\geq S^{\frac{3}{2s}}$ or
$\zeta_{\infty}\geq S^{\frac{3}{2s}}$. If the former case occurs,
 we have
\be\label{e4.30}\begin{split}
 R_1^2\geq \lim_{n\rightarrow\infty}\|(-\Delta )^\frac{s}{2}
 u_n\|^2_2
 &\geq S \lim_{n\rightarrow\infty}\left(\int_{\R^3}|u_n|^{2^*_s}dx\right)^{\frac{2}{2^*_s}}\\
  &\geq S \lim_{n\rightarrow\infty}\left(\int_{\R^3}|u_n|^{2^*_s}\varphi_\rho dx\right)^{\frac{2}{2^*_s}} =S\left(\int_{\R^3}\varphi_\rho
d\zeta\right)^{\frac{2}{2^*_s}}.\end{split}\ee
 Taking the limit $ \rho\rightarrow0$ in the last inequality, we get
    \[R_1^2\geq S (\zeta_{j_0})^{\frac{2}{2^*_s}}\geq S (S^{\frac{3}{2s}})^{\frac{2}{2^*_s}}=S^{\frac{3}{2s}},\]
 which contradicts \eqref{e4.9}. If the last case happens,   we
 have
\be\label{e4.31}\begin{split}
 R_1^2\geq \lim_{n\rightarrow\infty}\|(-\Delta )^\frac{s}{2}
 u_n\|^2_2
 &\geq S \lim_{n\rightarrow\infty}\left(\int_{\R^3}|u_n|^{2^*_s}dx\right)^{\frac{2}{2^*_s}}\\
  &\geq S \lim_{n\rightarrow\infty}\left(\int_{\R^3}|u_n|^{2^*_s}\psi_R
  dx\right)^{\frac{2}{2^*_s}}\\
 & \geq S \lim_{n\rightarrow\infty}\left(\int_{|x|\geq R}|u_n|^{2^*_s} dx\right)^{\frac{2}{2^*_s}}.\end{split}\ee
 Taking the limits $n\rightarrow\infty$ and $ R\rightarrow\infty$ in \eqref{e4.31}, we infer
 to
    \[R_1^2\geq S (\zeta_{\infty})^{\frac{2}{2^*_s}}\geq S (S^{\frac{3}{2s}})^{\frac{2}{2^*_s}}=S^{\frac{3}{2s}},\]
 which also contradicts \eqref{e4.9}. Therefore,
$\zeta_j=0$ for any $j\in J$ and $\zeta_\infty=0$. As a result,
  by    Lemma \ref{Lemma 4.1},  we obtain that $u_n\rightarrow u$ in $L_{loc}^{2^*_s}(\R^3)$;
   while by Lemma    \ref{Lemma 4.2}, we know that $u_n\rightarrow u$ in
   $L^{2^*_s}(\R^3)$.

\par Now, we prove  there exists $\mu^*_1>0$ independently on
$n\in\mathbb{N}$ such that if $\mu>\mu^*_1$, the Lagrange multiplier
$\alpha<0$ in \eqref{e4.13}. Indeed, note that $\{u_n\}\subset
S_{r,s}$ and $\|(-\Delta)^{\frac{s}{2}}u_n\|_2\leq R_1$, as can be
seen from the previous proof of this lemma, and
\eqref{e3.2}-\eqref{e3.3} that, there exists $Q_1>0$ independently
on $n$, such that
 \be \label{e4.32}\begin{split}
Q_1\leq \int_{\R^3}|u_n|^qdx&\leq
{C}(q,s) \|(-\Delta)^{\frac{s}{2}}u_n\|_2^{q\delta_{q,s}}\|u_n\|_2^{q(1-\delta_{q,s})}\\
&\leq
 {C}(q,s)R_1^{q\delta_{q,s}}a^{q(1-\delta_{q,s})},
\end{split}
\ee and
 \be \label{e4.33}\begin{split}
\int_{\R^3}\phi_{u_n}^tu_n^2dx\leq
\Gamma_t\|u_n\|_{\frac{12}{3+2t}}^4 &\leq
\Gamma_tC\left({12}/{3+2t},s\right)^{\frac{3+2t}{3}}\|(-\Delta)^{\frac{s}{2}}u_n\|_2^{\frac{3-2t}{s}}\|u_n\|_2^{\frac{2t+4s-3}{s}}\\
&\leq\Gamma_tC\left({12}/{3+2t},s\right)^{\frac{3+2t}{3}}R_1^{\frac{3-2t}{s}}a^{\frac{2t+4s-3}{s}}\\
&:=Q_2,
\end{split}
\ee where $Q_2=Q_2(s,t,R_1,a)>0.$ We define the constant
\be\label{e4.34}
\mu^*_1:=\frac{q\lambda(2t+4s-3)Q_2}{2[6-q(3-2s)]Q_1}. \ee By
\eqref{e4.32}-\eqref{e4.34}  we have
\be\label{e4.35}\mu^*_1>\lim_{n\rightarrow+\infty}\left\{\frac{q\lambda(2t+4s-3)\int_{\R^3}\phi_{u_n}^tu_n^2dx}{2[6-q(3-2s)]\int_{\R^3}|u_n|^qdx}\right\}=\frac{q\lambda(2t+4s-3)
\int_{\R^3}\phi_{u}^tu^2dx}{2[6-q(3-2s)]\int_{\R^3}|u|^qdx}>0.\ee
   Recall  by \eqref{e4.13} and its Pohozaev identity $P_{\mu}(u)=0$, we infer to
\be\label{e4.36} s\alpha
\|u\|_2^2=\lambda\frac{2t+4s-3}{4}\int_{\R^3}\phi_{u}^tu^2dx+\frac{q(3-2s)-6}{2q}\mu\int_{\R^3}|u|^qdx.
\ee
 Now, if $\mu>\mu^*_1,$ we conclude from \eqref{e4.35}, that
 \[\mu>\frac{q\lambda(2t+4s-3)\int_{\R^3}\phi_{u}^tu^2dx}{2[6-q(3-2s)]\int_{\R^3}|u|^qdx}.\]
Thus, from \eqref{e4.36}, we infer to
$\lim_{n\rightarrow+\infty}\alpha_n=\alpha<0.$  Hence, taking into
account  \eqref{e4.12}, we
   derive
\begin{equation}\label{e4.37}\begin{split}
&\lim_{n\rightarrow\infty}\left[\|(-\Delta )^\frac{s}{2} u_n\|^2_2+
 \lambda\int_{\R^3}\phi^t_{u_n}u_n^2 dx-\alpha\|u_n\|_2^2\right]\\
 &=\lim_{n\rightarrow\infty}\left[\mu\|u_n\|_q^q+\int_{\R^3}|u_n|^{2^*_s} dx+o_n(1)\right]\\
& = \mu\|u\|_q^q+ \int_{\R^3}|u|^{2^*_s}  dx=
\|(-\Delta)^\frac{s}{2} u\|^2_2+\lambda\int_{\R^3}\phi _{u}u^2
dx-\alpha\|u\|_2^2.
\end{split}
\end{equation}
Since $\alpha<0$ for $\mu>\mu^*_1$ large, we obtain by Fatou's
Lemma,
\begin{equation}\label{e4.38}\begin{split}
&\lim_{n\rightarrow\infty}\left[\|(-\Delta )^\frac{s}{2} u_n\|^2_2+
 \lambda\int_{\R^3}\phi^t_{u_n}u_n^2 dx-\alpha\|u_n\|_2^2\right]\\
 &\geq  \|(-\Delta )^\frac{s}{2} u\|^2_2+
 \lambda\int_{\R^3}\phi^t_{u}u^2 dx  +\liminf_{n\rightarrow\infty}(-\alpha\|u_n\|_2^2),\end{split}\end{equation}
and from \eqref{e4.37}-\eqref{e4.38}, one has
\begin{equation}\label{e4.39}-\alpha\|u\|_2^2\geq
\liminf_{n\rightarrow\infty}(-\alpha\|u_n\|_2^2).\end{equation} But
by Fatou's Lemma, we see that
\begin{equation}\label{e4.40}  \liminf_{n\rightarrow\infty}(-\alpha\|u_n\|_2^2)\geq -\alpha\|u\|_2^2.\end{equation}
Combining \eqref{e4.39} with \eqref{e4.40} we get
\[
\lim_{n\rightarrow\infty}\left(-\alpha\|u_n\|_2^2\right)=-\alpha\|u\|_2^2;
\]
that is,
\[
\lim_{n\rightarrow\infty}\|u_n\|_2^2=\|u\|_2^2.
\]
Thus, by \eqref{e4.37} we have
\[
\lim_{n\rightarrow\infty}\|(-\Delta )^\frac{s}{2}
u_n\|^2_2=\|(-\Delta )^\frac{s}{2} u\|^2_2.
\]
Theerfore, $u_n\rightarrow u$ in $H^s_{rad}(\R^3)$ and $\|u\|_2=a$.
The proof is complete.
\end{proof}

For $\varepsilon>0$, we introduce the set
\[
I^{-\varepsilon}_{\mu,\tau}=\left\{u\in H^s_{rad}(\R^3)\cap S_a:
 I_{\mu,\tau}(u)\leq-\varepsilon\right\}\subset H^s_{rad}(\R^3).
\]
By the fact that $I_{\mu,\tau}(u)$ is continuous and even on
$H^s_{rad}(\R^3)$, $I^{-\varepsilon}_{\mu,\tau}$ is closed and
symmetric.
\begin{lemma}\label{Lemma 4.4}
For any fixed $k \in \mathbb{N}$ , there exists
$\varepsilon_k:=\varepsilon(k)>0$ and
 $\mu_k:=\mu(k)>0$ such that, for $0<\varepsilon\leq\varepsilon_k$ and $\mu\geq \mu_k$, one has  that $\gamma(I^{-\varepsilon}_{\mu,\tau}) \geq k$.
 \end{lemma}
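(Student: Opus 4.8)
The plan is the classical genus construction: I will exhibit, inside $I^{-\varepsilon}_{\mu,\tau}$, a subset that is the image of the sphere $S^{k-1}$ under an odd homeomorphism, and then conclude from the monotonicity of the Krasnoselskii genus together with $\gamma(S^{k-1})=k$.

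First, fix $k\in\mathbb N$, choose $k$ linearly independent radial functions $e_1,\dots,e_k\in H^s_{rad}(\R^3)$, and set $W_k:=\mathrm{span}\{e_1,\dots,e_k\}$, a $k$-dimensional subspace. Since all norms on $W_k$ are equivalent, there are constants $C_k^{(1)},C_k^{(2)}>0$ (depending only on the now-fixed $W_k$) with $\|(-\Delta)^{s/2}u\|_2\le C_k^{(1)}\|u\|_2$ and $\|u\|_{12/(3+2t)}\le C_k^{(2)}\|u\|_2$ for all $u\in W_k$; moreover the $L^2$-unit sphere of $W_k$ is compact and $v\mapsto\|v\|_q^q$ is continuous and strictly positive on it, so $m_k:=\inf\{\|v\|_q^q:\ v\in W_k,\ \|v\|_2=1\}>0$, and by homogeneity $\|u\|_q^q\ge m_k\|u\|_2^q$ for all $u\in W_k$.

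Next I would bound $I_{\mu,\tau}$ on $S_a\cap W_k$. Discarding the nonpositive truncated critical term and using \eqref{e3.3} together with the above equivalences, every $u\in S_a\cap W_k$ satisfies
\[
I_{\mu,\tau}(u)\le\frac12\|(-\Delta)^{s/2}u\|_2^2+\frac{\lambda}{4}\Gamma_t\|u\|_{12/(3+2t)}^4-\frac{\mu}{q}\|u\|_q^q\le\Lambda_k-\frac{\mu}{q}m_k a^q,
\]
where $\Lambda_k:=\tfrac12(C_k^{(1)}a)^2+\tfrac{\lambda}{4}\Gamma_t(C_k^{(2)}a)^4$ is independent of $\mu$. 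Hence, putting $\varepsilon_k:=1$ and $\mu_k:=q(\Lambda_k+\varepsilon_k)/(m_k a^q)$, we get $\sup_{u\in S_a\cap W_k}I_{\mu,\tau}(u)\le-\varepsilon_k$ for all $\mu\ge\mu_k$, so that $S_a\cap W_k\subset I^{-\varepsilon}_{\mu,\tau}$ whenever $0<\varepsilon\le\varepsilon_k$ and $\mu\ge\mu_k$. Since the $L^2$-sphere $S_a\cap W_k$ of the finite-dimensional space $W_k$ is the image of $S^{k-1}$ under an odd homeomorphism (radial projection followed by dilation by $a$), we have $\gamma(S_a\cap W_k)=\gamma(S^{k-1})=k$; as $I^{-\varepsilon}_{\mu,\tau}\in\Sigma$ (it is closed, symmetric, and avoids $0$ because $I^{-\varepsilon}_{\mu,\tau}\subset S_a$) and contains $S_a\cap W_k$, monotonicity of the genus yields $\gamma(I^{-\varepsilon}_{\mu,\tau})\ge k$, which is the assertion.

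The only step deserving care is that all the estimates on $S_a\cap W_k$ — the upper bounds for the kinetic and nonlocal terms and the positive lower bound $m_k$ for $\|\cdot\|_q^q$ — are uniform over $S_a\cap W_k$ and independent of $\mu$; this is exactly where finite-dimensionality of $W_k$ is used, and it is what lets the term $-\tfrac{\mu}{q}m_k a^q$ push $I_{\mu,\tau}$ below $-\varepsilon_k$ once $\mu$ is large. If one additionally wants the $(PS)_c$ property of Lemma \ref{Lemma 4.3}(iii) to be available at the same time, it suffices to replace $\mu_k$ by $\max\{\mu_k,\mu^*_1\}$.
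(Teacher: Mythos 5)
Your proof is correct and is essentially the argument the paper points to: the paper omits the proof of Lemma \ref{Lemma 4.4} and simply refers to Lemma 3.2 of \cite{AJM}, which is exactly this finite-dimensional-subspace construction (norm equivalence on $W_k$, the uniform lower bound $m_k$ for $\|\cdot\|_q^q$ on the $L^2$-sphere, discarding the nonpositive truncated critical term) combined with $\gamma(S^{k-1})=k$ and monotonicity of the genus. The only point worth recording is that your $\mu_k$ depends on $a$ (through $\Lambda_k$ and $m_k a^q$); this is consistent with the statement of Lemma \ref{Lemma 4.4}, where $a$ is fixed, but should be kept in mind when the lemma is used in Theorem \ref{Theorem 2.1}, whose formulation lets $a$ vary with $\mu$.
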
 The proof of Lemma \ref{Lemma 4.4}  is
similar to Lemma 3.2 in \cite{AJM}, so we omit it here.

\par In the sequel,  we define the set
\[
\Sigma_k:=\left\{\Omega\subset H^s_{rad}(\R^3)\cap S_a: \Omega\mbox{
is closed and symmetric}, \gamma(\Omega)\geq k\right\},
\]
and by  Lemma \ref{Lemma 4.3}-(ii), we know that
\[
c_k:=\inf_{\Omega\in \Sigma_k}\sup_{u\in
\Omega}I_{\mu,\tau}(u)>-\infty
\]
for all $k\in \mathbb{N}$. To prove Theorem \ref{Theorem 2.1},  we
introduce the critical value, we define
\[
K_c:=\{u\in H^s_{rad}(\R^3)\cap S_a: I'_{\mu,\tau}(u)=0,
I_{\mu,\tau}(u)=c\}.
\]
Then, we can  derive  the following conclusion:
\begin{lemma}\label{Lemma 4.5}
If $c = c_k = c_{k+1} = \cdot\cdot\cdot = c_{k+\ell}$, then  one has
$\gamma(K_c)\geq \ell +1$. Especially, $I_{\mu,\tau}(u)$ admits at
least $\ell+1$ nontrivial critical points.\end{lemma}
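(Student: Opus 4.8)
The plan is to run the standard genus-based minimax deformation argument for even $C^1$ functionals, carried out on the smooth symmetric constraint $S_{r,a}=S_a\cap H^s_{rad}(\R^3)$. Throughout I work in the regime $\mu\geq\mu_{k+\ell}$ in which Lemma~\ref{Lemma 4.4} yields $c_{k+\ell}\leq-\varepsilon_{k+\ell}<0$, so the common value $c:=c_k=\cdots=c_{k+\ell}$ is strictly negative; recall also $c_k>-\infty$ by Lemma~\ref{Lemma 4.3}(ii). First I would note that $S_{r,a}$ is a $C^1$ Hilbert submanifold of $H^s_{rad}(\R^3)$ (a regular level set of $u\mapsto\frac12\|u\|_2^2$) not containing the origin, and that $I_{\mu,\tau}|_{S_{r,a}}$ is even and $C^1$ by Lemma~\ref{Lemma 4.3}(i); hence it admits an odd, locally Lipschitz pseudo-gradient field tangent to $S_{r,a}$. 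Since $c<0$, Lemma~\ref{Lemma 4.3}(iii) gives the $(PS)_c$ condition, so $K_c$ is compact; it is symmetric because $I_{\mu,\tau}$ is even and $0\notin S_a$, and $0\notin K_c$. Thus $\gamma(K_c)$ is well defined and finite, and by the continuity property of the genus there is a symmetric closed $\delta$-neighbourhood $U_\delta$ of $K_c$ in $S_{r,a}$ with $\gamma(U_\delta)=\gamma(K_c)$.

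Next I would argue by contradiction: assume $\gamma(K_c)\leq\ell$. The equivariant quantitative deformation lemma applied to $I_{\mu,\tau}|_{S_{r,a}}$ (available thanks to $(PS)_c$ and the odd tangent pseudo-gradient field; see e.g.~\cite{Willem}) provides $\varepsilon\in(0,-c)$ and an odd homeomorphism $\eta$ of $S_{r,a}$ with
\[
\eta\big(I^{c+\varepsilon}_{\mu,\tau}\setminus U_\delta\big)\subset I^{c-\varepsilon}_{\mu,\tau},
\]
where $I^{d}_{\mu,\tau}:=\{u\in S_{r,a}:I_{\mu,\tau}(u)\leq d\}$. By the definition of $c_{k+\ell}=c$ we may choose $\Omega\in\Sigma_{k+\ell}$ with $\sup_\Omega I_{\mu,\tau}\leq c+\varepsilon$, i.e.\ $\Omega\subset I^{c+\varepsilon}_{\mu,\tau}$. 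Put $\Omega':=\overline{\Omega\setminus U_\delta}$, a closed symmetric set. The subadditivity of the genus together with $\Omega\subset\Omega'\cup U_\delta$ gives
\[
\gamma(\Omega')\geq\gamma(\Omega)-\gamma(U_\delta)\geq(k+\ell)-\ell=k,
\]
so $\Omega'\in\Sigma_k$, and since $\eta$ is an odd homeomorphism, $\eta(\Omega')\in\Sigma_k$ as well. But $\eta(\Omega')\subset I^{c-\varepsilon}_{\mu,\tau}$, hence $c_k\leq\sup_{\eta(\Omega')}I_{\mu,\tau}\leq c-\varepsilon<c=c_k$, a contradiction. Therefore $\gamma(K_c)\geq\ell+1$.

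For the final assertion, taking $\ell=0$ gives $\gamma(K_{c_k})\geq1$, so $K_{c_k}\neq\emptyset$ and each $c_k$ is a critical value; and if $\gamma(K_c)\geq\ell+1\geq2$, then $K_c$ is an infinite set and hence contains infinitely many critical points. In all cases $I_{\mu,\tau}|_{S_{r,a}}$ has at least $\ell+1$ distinct nontrivial critical points at levels $\leq c<0$ (nontrivial since $0\notin S_a$). By Lemma~\ref{Lemma 4.3}(ii) each such $u$ satisfies $\|(-\Delta)^{s/2}u\|_2\leq R_1$ and lies in the region where $I_{\mu,\tau}$ and $I_\mu$ coincide, so these are in fact critical points of $I_\mu|_{S_{r,a}}$, i.e.\ normalized solutions of \eqref{e1.5}--\eqref{e1.6}.

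I expect the only genuinely delicate point to be the equivariant deformation step on $S_{r,a}$: one must produce an odd, locally Lipschitz pseudo-gradient vector field for $I_{\mu,\tau}|_{S_{r,a}}$ that is tangent to the manifold, and check that its negative flow pushes the sublevel set at height $c+\varepsilon$, minus a neighbourhood of $K_c$, below the level $c-\varepsilon$. This rests on the manifold being $C^1$, on the functional being $C^1$ and even there, and crucially on the $(PS)_c$ condition of Lemma~\ref{Lemma 4.3}(iii) — whose proof is where working in $H^s_{rad}(\R^3)$ restores the compactness lost to the critical term. Everything else, namely the finiteness and negativity of the $c_k$ and the monotonicity/continuity/subadditivity properties of the genus, is supplied by Lemmas~\ref{Lemma 4.3} and~\ref{Lemma 4.4} and standard genus theory, and is routine.
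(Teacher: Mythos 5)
Your proof is correct and takes essentially the same route as the paper: the paper likewise reduces to $c<0$ (Lemma \ref{Lemma 4.4}), compactness of $K_c$ via the $(PS)_c$ condition of Lemma \ref{Lemma 4.3}(iii), and then invokes Theorem 2.1 of \cite{AJM} or \cite{JLu} as a black box, which is precisely the equivariant deformation/genus argument you carry out explicitly. The only hypothesis worth making visible is that $(PS)_c$ also needs $\mu>\mu^*_1$, which the paper absorbs into the final choice $\mu\geq\mu^*_k=\max\{\mu^*_1,\mu_k\}$ in the proof of Theorem \ref{Theorem 2.1}.
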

\begin{proof}
 For $\varepsilon>0$, it is easy to check that
$I^{-\varepsilon}_{\mu,\tau}\in \Sigma$. For any fixed $k\in
\mathbb{N}$, by Lemma  \ref{Lemma 4.4}, there exists
$\varepsilon_k:=\varepsilon(k)>0$ and $\mu_k:=\mu(k)>0$ such that,
if $0<\varepsilon\leq\varepsilon_k$ and $\mu\geq \mu_k$, we
 have $\gamma(I^{-\varepsilon_k}_{\mu,\tau}) \geq k.$ Thus, $I^{-\varepsilon_k}_{\mu,\tau} \in \Sigma_k$,
 and moreover,
\[
c_k\leq\sup_{u\in
 I^{-\varepsilon_k}_{\mu,\tau}}I_{\mu,\tau}(u)=-\varepsilon_k<0.
\]
Assume that $0> c = c_k = c_{k+1} = \cdot\cdot\cdot = c_{k+\ell}$.
Then, by Lemma \ref{Lemma 4.3}-(iii), $I_{\mu,\tau}(u)$ satisfies
the $(PS)_c$-condition at the level $c < 0$. So, $K_c$ is a compact
set. By Theorem 2.1 in \cite{AJM}, or Theorem 2.1 in \cite{JLu}, we
know that the restricted functional  $I_{\mu,\tau}|_{S_a}$ possesses
at least $\ell+1$ nontrivial critical points.
\end{proof}
 \noindent   {\em Proof of Theorem \ref{Theorem 2.1}.} Let $\mu\geq\mu^*_k=\max\{\mu^*_1,\mu_k\}$. From Lemma \ref{Lemma 4.3}-(ii), we see that the critical points of $I_{\mu,\tau}(u)$ found in
Lemma  \ref{Lemma 4.5} are the critical points of $I_{\mu}$, which
completes the proof. \qed

\vskip0.2in

  \s{ Proof of Theorem \ref{Theorem 2.2}}

\par From   Lemma \ref{Lemma 3.5},  we see that any critical point   of $I_{\mu}|_{S_a}$ belongs to $\mathcal {P}_{a}$.
Consequently, the properties of the manifold $\mathcal {P}_{a}$ have
relation to the mini-max structure of $I_{\mu}|_{S_a}$.  For $u\in
S_a$ and $t\in\R$, we introduce the transformation (e.g.
\cite{Soave1}): \be\label{e5.1}
  (\theta\star u)(x):=e^{\frac{3\theta}{2}}u(e^{\theta}x),~~~~
x\in\R^3,~~\theta\in\R.\ee It is easy to check that the dilations
preserve the $L^2$-norm such that $\theta\star u\in S_a$, by direct
calculation, one has \be\label{e5.2}\begin{split}
 I(u,\theta)=I_{\mu} ((\theta\star u))=&\frac{e^{2s\theta}}{2}\|u\|^2+\frac{\lambda e^{(3-2t)\theta}}{4}\int_{\R^3}\phi^t_uu^2dx-\frac{\mu}{q}e^{(\frac{3q}{2}-3)\theta}\int_{\R^3}|u|^qdx\\
&-\frac{1}{2^*_{s}}e^{3(\frac{2^*_{s}}{2}-1)\theta}\int_{\R^3}|u|^{2^*_{s}}dx,
\end{split}\ee

\bl \label{Lemma 5.1}  Let $u\in S_a,$ then
\begin{itemize}
\item[(i)]$\|(-\Delta)^{\frac{s}{2}}(\theta\star u)\|_2\rightarrow0$  and $I_{\mu} ((\theta\star u))\rightarrow0$ as $\theta\rightarrow-\infty$;
\item[(ii)] $\|(-\Delta)^{\frac{s}{2}}(\theta\star u)\|_2\rightarrow+\infty$ and
$I_{\mu} ((\theta\star u))\rightarrow-\infty$ as $\theta\rightarrow
+\infty$.
\end{itemize}
\el \bp  A direct computation shows that \be\label{e5.3}
\int_{\R^3}|(-\Delta)^{\frac{s}{2}}(\theta\star
u)|^2dx=e^{2s\theta}\int_{\R^3}|(-\Delta)^{\frac{s}{2}} u|^2dx,\ee
and
\[\|(-\Delta)^{\frac{s}{2}}(\theta\star u)\|_2\rightarrow
0~~~\mbox{as}~~\theta\rightarrow-\infty.\] Notice  that
\be\label{e5.4}\begin{split}
 I_{\mu}  ((\theta\star u))=&\frac{e^{2s\theta}}{2}\|u\|^2+\frac{\lambda e^{(3-2t)\theta}}{4}\int_{\R^3}\phi^t_uu^2dx-\frac{\mu}{q}e^{(\frac{3q}{2}-3)\theta}\int_{\R^3}|u|^qdx\\
&-\frac{1}{2^*_{s}}e^{\frac{3(2^*_{s}-2)}{2}\theta}\int_{\R^3}|u|^{2^*_{s}}dx,
\end{split}\ee
by    $q>2$, we infer to  \[I_{\mu}  ((\theta\star
u))\rightarrow-\infty,~~~\mbox{as}~~\theta\rightarrow+\infty.\]Hence,
item (i) follows. Using $2s + 2t > 3$,  it is easy to obtain that
$\frac{3(2^*_{s}-2)}{2}>3-2t$, and conclusion (ii) holds. \ep

\bl\label{Lemma 5.2} There exist  $K=K_a> 0 $ and $\widetilde{a} >
0$ such that for all $0 < a < \widetilde{a },$ \be\label{e5.5} 0 <
\sup_{u\in \mathcal {A}_a} I_{\mu} (u) < \inf_{u\in \mathcal
{B}_a}I_{\mu}(u), \ee where $\mathcal {A}_a:= \{u\in S_{r,a} :
\int_{\R^3} |(-\Delta)^{\frac{s}{2}}u|^2dx\leq K_a\}, ~~\mathcal
{B}_a:= \{u\in S_{r,a}  : \int_{\R^3} |(-\Delta)^{\frac{s}{2}}u|^2dx
=2 K_a\}.$ \el

\bp By Lemma \ref{Lemma 3.2}, we have for any $q\in(2,2^*_s)$, that
\be\label{e5.6}\|u\|_q^q\leq
C(q,s)\|(-\Delta)^{\frac{s}{2}}u\|_2^{q\delta_{q,s}}\|u\|_2^{q(1-\delta_{q,s})}.\ee
By the Sobolev inequality \eqref{e3.1}, and  \eqref{e5.6}, for $u
\in S_{r,a} $, we have \be\label{e5.7}\begin{split}
 &I_{\mu}((\theta\star u))-I_{\mu}(u)\\
 &=\frac{1}{2}\|(\theta\star u)\|^2-\frac{1}{2}\| u\|^2+\frac{\lambda}{4}\int_{\R^3}\phi^t_{(\theta\star u)}|(\theta\star u)|^2dx
 -\frac{\lambda}{4}\int_{\R^3}\phi^t_uu^2dx\\
 &\hspace{0.45cm}-\frac{\mu}{q}\int_{\R^3}|(\theta\star u)|^qdx+\frac{\mu}{q}\int_{\R^3}|u|^qdx-\frac{1}{2^*_{s}}\int_{\R^3}|(\theta\star u)|^{2^*_{s}}dx+
 \frac{1}{2^*_{s}}\int_{\R^3}|u|^{2^*_{s}}dx\\
 &\geq \frac{1}{2}\|(\theta\star u)\|^2-\frac{1}{2}\|u\|^2
 -\lambda \Gamma_t K_a^{\frac{3-2t}{2s}}\|u\|_2^{\frac{4s+2t-3}{s}}-\frac{\mu}{q}\int_{\R^3}|(\theta\star u)|^qdx-\frac{1}{2^*_{s}}\int_{\R^3}|(\theta\star u)|^{2^*_{s}}dx\\
 &\geq \frac{1}{2}\|(\theta\star u)\|^2-\frac{1}{2}\|u\|^2
 -\lambda\Gamma_t K_a^{\frac{3-2t}{2s}}a^{\frac{4s+2t-3}{s}}-\frac{\mu}{q}C(q,s)a^{\frac{6-q(3-2s)}{2s}}\left(\|(\theta\star u)\|^2\right)^{\frac{q\delta_{q,s}}{2}}\\
 &\hspace{0.45cm} -\frac{S^{-\frac{2^*_s}{2}}}{2^*_{s}}\left(\|(\theta\star u)\|^2\right)^{\frac{2^*_s}{2}}.
 \end{split}
 \ee
Let $\|u\|^2\leq K_a$ and choose $\theta>0$ such that
$\|(\theta\star u)\|^2 =2 K_a$, here $K_a$ will be determined later,
set\[ \widetilde{a}=\left(\frac{K_a^{\frac{2t+2s-3}{2s}}}{16\lambda
 \Gamma_t}\right)^{\frac{s}{4s+2t-3}},\] then we get
\be\label{e5.8}\begin{split}
 &I_{\mu}((\theta\star u))-I_{\mu}(u)\\
 &\geq \frac{1}{2}K_a
 -\lambda \Gamma_t K_a^{\frac{3-2t}{2s}}\widetilde{a}^{\frac{4s+2t-3}{s}}-\frac{\mu}{q}2^{\frac{q\delta_{q,s}}{2}}C(q,s)\widetilde{a}^{\frac{6-q(3-2s)}{2s}}K_a^{\frac{3(q-2)}{4s}}
 -\frac{S^{-\frac{2^*_s}{2}}}{2^*_{s}}2^{\frac{2^*_s}{2}}K_a^{\frac{2^*_s}{2}}\\
 &\geq
 \frac{1}{2}K_a-\frac{1}{16}K_a-\frac{\mu}{q}2^{\frac{3(q-2)}{4s}}C(q,s)\left(\frac{1}{16\lambda{\Gamma_t}}\right)^{\frac{6-q(3-2s)}{2(4s+2t-3)}}
 K_a^{\frac{[6-q(3-2s)][2t+2s-3]}{4s(4s+2t-3)}}
K_a^{\frac{3(q-2)}{4s}}\\
&\hspace{0.45cm}-\frac{S^{-\frac{2^*_s}{2}}}{2^*_{s}}2^{\frac{2^*_s}{2}}K_a^{\frac{2^*_s}{2}}\\
&= \frac{7}{16}K_a-\frac{\mu
 2^{\frac{3(q-2)}{4s}}C(q,s)}{q(16\lambda{\Gamma_t})^{\frac{6-q(3-2s)}{2(4s+2t-3)}}} K_a^{\gamma_1}K_a-\frac{2^{\frac{2^*_s}{2}}}{2^*_sS^{\frac{2^*_s}{2}}}K_a^{\frac{2^*_s-2}{2}}K_a\\
 &\geq\frac{5}{16}K_a>0,
 \end{split}
 \ee where
 $\gamma_1:=\frac{[2t+2s-3][6-q(3-2s)]+[3(q-2)-4s][4s+2t-3]}{4s(4s+2t-3)}$.
If we take
\[ K_a=\min\left\{\left(\frac{q[16\lambda {\Gamma_t}]^{\frac{6-q(3-2s)}{2(4s+2t-3)}}}{16\mu
 2^{\frac{3(q-2)}{4s}}C(q,s)}\right)^{\gamma_2}, ~~\left(\frac
 {2^*_s S^{\frac{2^*_s}{2}}}{2^{\frac{2^*_s}{2}}16}\right)^{\frac{2}{2^*_s-2}}\right\}\]
with
$\gamma_2:=\frac{4s(4s+2t-3)}{[2t+2s-3][6-q(3-2s)]+[3(q-2)-4s][4s+2t-3]}$,
then, we deduce by \eqref{e5.8} that   \eqref{e5.5} holds.
 \ep
 By Lemma \ref{Lemma 5.2}, we can deduce  the following
\par\noindent {\bf Corollary 5.1.} {\em Let $K_a, \widetilde{a}$ be
given in Lemma \ref{Lemma 5.2}, and $u\in S_{r,a} $ with
$\|u\|^2\leq K_a$, then $I_{\mu}(u)> 0.$ Furthermore, we have
\[L_0:=\inf\left\{I_{\mu}(u):~u\in S_{r,a} , \|u\|^2=\frac{1}{2}
K_a\right\}>0.\]} \bp As in the proof of Lemma \ref{Lemma 5.2}, we
have that
\[
I_{\mu}(u)  \geq \frac{1}{2}\|u\|^2
  -\frac{\mu}{q}C(q,s){a}^{\frac{6-q(3-2s)}{2s}}\left(\| u\|^2\right)^{\frac{3(q-2)}{4s}}
 -\frac{S^{-\frac{2^*_s}{2}}}{2^*_{s}}\left(\|u\|^2\right)^{\frac{2^*_s}{2}}>0,
 \]if $\|u\|^2\leq K_a$, and the conclusion follows. \ep

\par Next, we study the characterizations of the mountain pass levels for
$I(u,\theta)$ and $I_{\mu}(u).$ Denote  the closed set
$I_{\mu}^d:=\{u\in
 S_{r,a}:  I_{\mu}(u) \leq d \},$ and  $ S_{r,a}:= H^s_r (\R^3)\cap S_a.$

\begin{proposition}\label{Prop 5.3} Under assumptions $2+\frac{4s}{3}<q < 2^*_s,$
define
 \[\widetilde{c}_{\mu}(a):=\inf_{\widetilde{\gamma}\in\widetilde{\Gamma}}\max_{t\in[0,1]}I(\widetilde{\gamma}(t)),\]
where
\[\widetilde{\Gamma}_a=\{\widetilde{\gamma}\in C([0,1],
S_{r,a}\times\R):~\widetilde{\gamma}(0)\in (\mathcal {A}_a,0),
\widetilde{\gamma}(1)\in (I_{\mu}^0,0)\},\] and
 \[{c}_{\mu}(a):=\inf_{{\gamma}\in{\Gamma}}\max_{t\in[0,1]}I_{\mu}({\gamma}(t)),\]
where
\[{\Gamma}_a=\{{\gamma}\in C([0,1],
S_{r,a}):~{\gamma}(0)\in \mathcal {A}_a, \gamma(1)\in I_{\mu}^0\},\]
then we have
\[\widetilde{c}_{\mu}(a)={c}_{\mu}(a)>0.\]
\end{proposition}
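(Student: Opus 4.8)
The plan is to prove the equality $\widetilde{c}_{\mu}(a)={c}_{\mu}(a)$ by exhibiting a bijection between the two classes of paths that preserves the max of the energy along the path, and then to verify positivity separately via the mountain pass geometry established in Lemmas~\ref{Lemma 5.1} and \ref{Lemma 5.2}.

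\textbf{Step 1: The correspondence between paths.} The key observation is that the functional $I(u,\theta)=I_{\mu}(\theta\star u)$ on $S_{r,a}\times\R$ is just $I_{\mu}$ read through the $\R$-action $\star$. Given $\gamma\in\Gamma_a$, I would set $\widetilde\gamma(t):=(\gamma(t),0)$; since $0\star u=u$, we have $I(\widetilde\gamma(t))=I_{\mu}(0\star\gamma(t))=I_{\mu}(\gamma(t))$, the endpoint conditions match ($\widetilde\gamma(0)\in(\mathcal A_a,0)$ and $\widetilde\gamma(1)\in(I_\mu^0,0)$ because $\gamma(1)\in I_\mu^0$), and continuity is clear. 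This gives $\widetilde{c}_{\mu}(a)\le \max_t I_{\mu}(\gamma(t))$ for every $\gamma$, hence $\widetilde{c}_{\mu}(a)\le c_{\mu}(a)$. Conversely, given $\widetilde\gamma=(\widetilde\gamma_1,\widetilde\gamma_2)\in\widetilde\Gamma_a$, I would define $\gamma(t):=\widetilde\gamma_2(t)\star\widetilde\gamma_1(t)\in S_{r,a}$ (the $\star$-action preserves the $L^2$-norm and radial symmetry, and preserves radial monotonicity, so $\gamma(t)\in S_{r,a}$). Then $I_{\mu}(\gamma(t))=I(\widetilde\gamma(t))$ for all $t$. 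At the endpoints $\widetilde\gamma_2(0)=\widetilde\gamma_2(1)=0$, so $\gamma(0)=\widetilde\gamma_1(0)\in\mathcal A_a$ and $\gamma(1)=\widetilde\gamma_1(1)\in I_{\mu}^0$; continuity of $t\mapsto \widetilde\gamma_2(t)\star\widetilde\gamma_1(t)$ follows since $\star:\R\times H^s_{r}(\R^3)\to H^s_{r}(\R^3)$ is continuous (this uses the explicit expression \eqref{e5.1} and dominated convergence). Hence $c_{\mu}(a)\le\max_t I(\widetilde\gamma(t))$ for every $\widetilde\gamma$, giving $c_{\mu}(a)\le\widetilde{c}_{\mu}(a)$, and the two are equal.

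\textbf{Step 2: Positivity.} To show $c_{\mu}(a)>0$, I would fix $0<a<\widetilde a$ from Lemma~\ref{Lemma 5.2}, so that $\sup_{\mathcal A_a}I_{\mu}<\inf_{\mathcal B_a}I_{\mu}$ with $\mathcal B_a=\{\|(-\Delta)^{s/2}u\|_2^2=2K_a\}$. First I must check $\Gamma_a\ne\emptyset$: by Lemma~\ref{Lemma 5.1}(i), for any $u\in S_{r,a}$ with $\|(-\Delta)^{s/2}u\|_2^2\le K_a$, the path $\theta\mapsto\theta\star u$ starts near $\mathcal A_a$ and, by Lemma~\ref{Lemma 5.1}(ii), eventually enters $I_{\mu}^0$ as $\theta\to+\infty$; reparametrizing gives an admissible path (and one checks it crosses $\mathcal B_a$ at some intermediate $\theta$ since $\|(-\Delta)^{s/2}(\theta\star u)\|_2^2=e^{2s\theta}\|(-\Delta)^{s/2}u\|_2^2$ is continuous and increasing from $0$ to $+\infty$). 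Then for any $\gamma\in\Gamma_a$, since $\gamma(0)\in\mathcal A_a$ has $\|(-\Delta)^{s/2}\gamma(0)\|_2^2\le K_a$ and $\gamma(1)\in I_{\mu}^0\subset\{I_{\mu}\le 0\}$ which, by Corollary~5.1, forces $\|(-\Delta)^{s/2}\gamma(1)\|_2^2>\tfrac12 K_a$ — actually I need $\gamma(1)$ to lie on the far side of $\mathcal B_a$; more robustly, by continuity of $t\mapsto\|(-\Delta)^{s/2}\gamma(t)\|_2^2$ the path must hit $\mathcal B_a$, so $\max_t I_{\mu}(\gamma(t))\ge\inf_{\mathcal B_a}I_{\mu}>0$. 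Taking the infimum over $\gamma$ gives $c_{\mu}(a)\ge\inf_{\mathcal B_a}I_{\mu}>0$.

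\textbf{Main obstacle.} The delicate point is the topological/continuity argument in Step~2: ensuring that every admissible path genuinely crosses the sphere $\mathcal B_a$, which requires checking that $\gamma(1)\in I_{\mu}^0$ actually lies in the region $\{\|(-\Delta)^{s/2}u\|_2^2>2K_a\}$ rather than merely having nonpositive energy with small gradient norm. This is where the precise choice of $K_a$ in Lemma~\ref{Lemma 5.2} and the coercivity-type estimate of Corollary~5.1 (which shows $I_{\mu}(u)>0$ whenever $\|u\|^2\le K_a$, and in fact stays positive up to $\|u\|^2=\tfrac12 K_a$) must be combined: since $I_{\mu}>0$ on the ball $\{\|(-\Delta)^{s/2}u\|_2^2\le K_a\}$, any $u$ with $I_{\mu}(u)\le 0$ satisfies $\|(-\Delta)^{s/2}u\|_2^2>K_a$, but to get past $2K_a$ one sharpens the estimate (the same computation as \eqref{e5.8} shows $I_{\mu}>0$ on the larger region up to $2K_a$ after possibly shrinking $a$), so $\gamma(1)$ is on the correct side and the intermediate value argument applies. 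A secondary technical point is the continuity of the map $(\theta,u)\mapsto\theta\star u$ in $H^s$, needed to transport paths in Step~1; this is routine from \eqref{e5.1} but should be stated.
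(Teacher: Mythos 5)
Your proposal is correct and follows essentially the same route as the paper: the inclusion $\Gamma_a\times\{0\}\subset\widetilde{\Gamma}_a$ for the inequality $\widetilde{c}_{\mu}(a)\le c_{\mu}(a)$, the map $\widetilde{\gamma}\mapsto\widetilde{\gamma}_2\star\widetilde{\gamma}_1$ (which you write in the correct order, fixing a typo in the paper) for the reverse inequality, and the mountain-pass geometry of Lemma \ref{Lemma 5.2} and Corollary 5.1 for positivity. Your Step 2 merely makes explicit the intermediate-value crossing of $\mathcal{B}_a$ and the extension of the positivity estimate to the ball $\{\|(-\Delta)^{s/2}u\|_2^2\le 2K_a\}$, details the paper compresses into the phrase ``using Corollary 5.1''.
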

\bp
  Note that $\Gamma_a\times\{0\}\subset\widetilde{\Gamma}_a,$ we
see that $\widetilde{c}_{\mu}(a)\leq {c}_{\mu}(a).$ On the other
hand, for
$\widetilde{\gamma}(t)=(\widetilde{\gamma}_1(t),\widetilde{\gamma}_2(t))\in\widetilde{\Gamma}_a$,
we denote by $\gamma(t)=
\widetilde{\gamma}_1(t)\star\widetilde{\gamma}_2(t)$. Thus,
$\gamma(t)\in\Gamma_a,$ and so
\[\max_{t\in[0,1]}I(\widetilde{\gamma}(t))=\max_{t\in[0,1]}I_{\mu}(\widetilde{\gamma}_1(t)\star\widetilde{\gamma}_2(t))=\max_{t\in[0,1]}I_{\mu}(\gamma(t)),\]
which implies that $\widetilde{c}_{\mu}(a)\geq {c}_{\mu}(a)>0,$
using Corollary 5.1.
 \ep
 Next, we  show the existence of the
 $(PS)_{c_{\mu}(a)}$-sequence for $I(u, \theta)$ on $S_{r,a}\times\R\subset \mathbb{H}$. It is obtained
by a standard argument using Ekeland's variational principle and
constructing pseudo-gradient flow, see Proposition 2.2 \cite{J}.

 \begin{proposition} \label{Prop 5.4} Let
$\{h_n\}\subset\widetilde{\Gamma}_a$ satisfying that
\[\max_{t\in[0,1]}I(h_n(t))\leq \widetilde{c}_{\mu}(a)+\frac{1}{n},\]
then   there exists a sequence $\{(v_n, \theta_n)\}\subset S_{r,a}
\times\R$ such that

\begin{itemize}
\item[(i)] $I(v_n,\theta_n)\in
[\widetilde{c}_{\mu}(a)-\frac{1}{n},\widetilde{c}_{\mu}(a)+\frac{1}{n}],$
\item[(ii)] $\min_{t\in [0,1]}\|(v_n, \theta_n)-h_n(t)\|_{\mathbb{H}}\leq \frac{1}{\sqrt{n}}$; and
\item[(iii)]$\|(I|_{S_{r,a}\times\R})'(v_n,\theta_n)\|\leq\frac{2}{\sqrt{n}},$
that is,
\[|\langle I'(v_n,\theta_n),z\rangle_{\mathbb{H}^{-1}\times\mathbb{H}}|\leq\frac{2}{\sqrt{n}}\|z\|_{\mathbb{H}},\]for
all
    \[z\in \widetilde{T}_{(v_n,\theta_n)} \triangleq\{(z_1, z_2)\in \mathbb{H}: \langle v_n, z_1\rangle_{L^2} = 0\}.\]
\end{itemize}
\end{proposition}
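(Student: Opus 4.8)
\medskip
\noindent\textbf{A proof proposal.}
The plan is to run Ekeland's variational principle on the metric space of admissible paths $\widetilde{\Gamma}_a$, equipped with the uniform distance $d(\gamma_1,\gamma_2):=\max_{t\in[0,1]}\|\gamma_1(t)-\gamma_2(t)\|_{\mathbb{H}}$, applied to the ``max'' functional $\Psi(\gamma):=\max_{t\in[0,1]}I(\gamma(t))$; this is exactly the mechanism of Proposition~2.2 in \cite{J}. First I would record the structural facts that make this work: $S_{r,a}\times\R$ is a closed $C^{1}$ submanifold of the Hilbert space $\mathbb{H}$, hence complete, so $(\widetilde{\Gamma}_a,d)$ is a complete metric space; $I\in C^{1}(\mathbb{H},\R)$ and $\Psi$ is continuous on $\widetilde{\Gamma}_a$; and, by the definition of $\widetilde{c}_{\mu}(a)$ together with Proposition~\ref{Prop 5.3}, every $\gamma\in\widetilde{\Gamma}_a$ has $\Psi(\gamma)\ge\widetilde{c}_{\mu}(a)>0$. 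The hypothesis on $\{h_n\}$ reads $\Psi(h_n)\le\widetilde{c}_{\mu}(a)+\frac1n$, so $\{h_n\}$ is a $\frac1n$-minimizing sequence for $\Psi$ on $\widetilde{\Gamma}_a$.

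Applying Ekeland's principle with parameters $\varepsilon=\frac1n$ and $\delta=\frac{1}{\sqrt n}$ yields a path $g_n\in\widetilde{\Gamma}_a$ with
\[
\Psi(g_n)\le\Psi(h_n)\le\widetilde{c}_{\mu}(a)+\tfrac1n,\qquad d(g_n,h_n)\le\tfrac{1}{\sqrt n},
\]
and such that $g_n$ minimizes $\gamma\mapsto\Psi(\gamma)+\frac{1}{\sqrt n}\,d(\gamma,g_n)$ over $\widetilde{\Gamma}_a$; equivalently $\Psi(\gamma)\ge\Psi(g_n)-\frac{1}{\sqrt n}\,d(\gamma,g_n)$ for all $\gamma\in\widetilde{\Gamma}_a$. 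Since $g_n$ is still admissible, its endpoints satisfy $g_n(0)\in(\mathcal{A}_a,0)$ and $g_n(1)\in(I_{\mu}^{0},0)$, whence by Lemma~\ref{Lemma 5.2}, Corollary~5.1 and Proposition~\ref{Prop 5.3},
\[
I(g_n(0))\le\sup_{u\in\mathcal{A}_a}I_{\mu}(u)<\widetilde{c}_{\mu}(a)\qquad\text{and}\qquad I(g_n(1))\le0<\widetilde{c}_{\mu}(a);
\]
that is, the endpoint levels lie strictly below the minimax level.

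I would then extract the approximate critical point from the near-maximum part of $g_n$. Let $M_n:=\{t\in[0,1]:\ I(g_n(t))\ge\widetilde{c}_{\mu}(a)-\frac1n\}$, which for $n$ large is a compact subset of $(0,1)$ by the endpoint estimates. \emph{Claim:} there exists $t_n\in M_n$ with $\|(I|_{S_{r,a}\times\R})'(g_n(t_n))\|\le\frac{2}{\sqrt n}$. Granting the claim, set $(v_n,\theta_n):=g_n(t_n)$. Then (i) holds because $\widetilde{c}_{\mu}(a)-\frac1n\le I(g_n(t_n))\le\Psi(g_n)\le\widetilde{c}_{\mu}(a)+\frac1n$; (ii) holds because $\min_{t\in[0,1]}\|(v_n,\theta_n)-h_n(t)\|_{\mathbb{H}}\le\|g_n(t_n)-h_n(t_n)\|_{\mathbb{H}}\le d(g_n,h_n)\le\frac{1}{\sqrt n}$; and (iii) is the claim restated, since $\|(I|_{S_{r,a}\times\R})'(v_n,\theta_n)\|$ is precisely the supremum of $|\langle I'(v_n,\theta_n),z\rangle_{\mathbb{H}^{-1}\times\mathbb{H}}|$ over $z\in\widetilde{T}_{(v_n,\theta_n)}$ with $\|z\|_{\mathbb{H}}\le1$.

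The heart of the matter — and the step I expect to be the main obstacle — is the claim, which I would prove by contradiction through a localized deformation. Suppose $\|(I|_{S_{r,a}\times\R})'(w)\|>\frac{2}{\sqrt n}$ for every point $w$ of $g_n$ with $I(w)\ge\widetilde{c}_{\mu}(a)-\frac1n$. Using the $C^{1}$ regularity of $I$ and the manifold structure, one constructs on a neighbourhood of $g_n(M_n)$ a locally Lipschitz pseudo-gradient field $V$ for $I|_{S_{r,a}\times\R}$, \emph{tangent} to $S_{r,a}\times\R$, with $\|V\|\le1$ and $\langle(I|_{S_{r,a}\times\R})'(w),V(w)\rangle\le-\frac{3}{2\sqrt n}$ there, and multiplies it by a Lipschitz cut-off equal to $1$ near $g_n(M_n)$ and vanishing on $\{I\le\widetilde{c}_{\mu}(a)-\frac1n\}$ — in particular near both endpoints of $g_n$, by the previous paragraph. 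Integrating $\dot\eta=-V(\eta)$ up to a small time $\sigma^{\ast}>0$ gives a map $\eta(\sigma^{\ast},\cdot)$ that leaves $S_{r,a}\times\R$ invariant (tangency), fixes $g_n(0)$ and $g_n(1)$ (cut-off), moves each point a distance at most $\sigma^{\ast}$, and decreases $I$ along $g_n(M_n)$ at rate at least $\frac{3}{2\sqrt n}$; hence $\widetilde{\gamma}:=\eta(\sigma^{\ast},g_n(\cdot))\in\widetilde{\Gamma}_a$ and, choosing $\sigma^{\ast}$ (and the cut-off parameters) small enough,
\[
\Psi(\widetilde{\gamma})+\tfrac{1}{\sqrt n}\,d(\widetilde{\gamma},g_n)\ \le\ \Psi(g_n)-\tfrac{3}{2\sqrt n}\sigma^{\ast}+\tfrac{1}{\sqrt n}\sigma^{\ast}\ <\ \Psi(g_n),
\]
contradicting the Ekeland minimality of $g_n$. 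Here the role of the factor $2$ in the threshold $\tfrac{2}{\sqrt n}$ is precisely to furnish the slack $\tfrac{3}{2\sqrt n}>\tfrac{1}{\sqrt n}$ that lets the energy drop outrun the penalty term. The delicate points are exactly making $V$ tangent to the constraint (so the flow stays on $S_{r,a}\times\R$), arranging the cut-off so that \emph{both} boundary conditions defining $\widetilde{\Gamma}_a$ are preserved, and tracking constants through the deformation estimate; with these in place the argument is the one of Proposition~2.2 in \cite{J}, and all its ingredients — $I\in C^{1}(\mathbb{H},\R)$, completeness of $S_{r,a}\times\R$, and the strict mountain-pass separation from the endpoint levels (Lemmas~\ref{Lemma 5.1}–\ref{Lemma 5.2}, Corollary~5.1, Proposition~\ref{Prop 5.3}) — are available in the present setting, which completes the proof.
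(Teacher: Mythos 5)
Your proposal is correct and follows essentially the same route as the paper, which itself gives no details and simply invokes the standard argument of Proposition~2.2 in \cite{J} (Ekeland's variational principle on the path space with the uniform metric, followed by a tangent pseudo-gradient deformation to produce the almost-critical point on a near-optimal path). Your sketch fills in exactly the ingredients that reference relies on, so no further comparison is needed.
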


\par It follows from the above proposition, we can obtain a special
$(PS)_{c_{\mu}(a)}$-sequence for $I_{\mu}(u)$ on $S_{r,a}\subset
H^s(\R^3).$

 \begin{proposition} \label{Prop 5.5}  Under the assumption $2+\frac{4s}{3}< q < 2^*_s$,
there exists a sequence $\{u_n\}\subset S_{r,a}$ such that

\begin{itemize}
\item[(1)] $I_{\mu}(u_n)\rightarrow c_{\mu}(a)$ as $n\rightarrow\infty;$

\item[(2)] $P_{\mu}(u_n)\rightarrow 0$ as $n\rightarrow\infty;$

\item[(3)] $(I_{\mu}|_{S_{r,a}})'(u_n)\rightarrow 0$ as
$n\rightarrow\infty,$ i.e., $\langle
I_{\mu}'(u_n),z\rangle_{H^{-1}\times H}\rightarrow0$, uniformly for
all $z$ satisfying
\[\|z\|_H\leq 1, ~~\mbox{where}~~z\in T_{u_n}:=\{z\in H:~\langle
u_n,z\rangle_{L^2}=0\}\]
\end{itemize}
\end{proposition}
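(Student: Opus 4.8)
The plan is to derive Proposition~\ref{Prop 5.5} from Proposition~\ref{Prop 5.4} by the dilation change of variables $u\mapsto\theta\star u$, in the spirit of the monotonicity trick of \cite{J}. The first step is to apply Proposition~\ref{Prop 5.4} to a minimizing sequence of paths having vanishing $\R$-component. Since $\widetilde c_\mu(a)=c_\mu(a)$ by Proposition~\ref{Prop 5.3}, I would pick $\gamma_n\in\Gamma_a$ with $\max_{t\in[0,1]}I_\mu(\gamma_n(t))\le c_\mu(a)+\tfrac1n$ and set $h_n(t):=(\gamma_n(t),0)$. Then $h_n\in\widetilde\Gamma_a$ and, by \eqref{e5.2}, $I(h_n(t))=I_\mu(0\star\gamma_n(t))=I_\mu(\gamma_n(t))$, so $\max_{t\in[0,1]}I(h_n(t))\le\widetilde c_\mu(a)+\tfrac1n$. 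Proposition~\ref{Prop 5.4} then produces $\{(v_n,\theta_n)\}\subset S_{r,a}\times\R$ satisfying (i)--(iii). Because the $\R$-component of $h_n$ is identically zero, property (ii) forces $|\theta_n|=\min_{t\in[0,1]}|\theta_n-0|\le\tfrac1{\sqrt n}$, hence $\theta_n\to0$; this is the only place the particular choice of $h_n$ is used, and it is what makes the estimates below uniform.

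Next I would set $u_n:=\theta_n\star v_n$; since $\theta_n\star\,\cdot$ maps $S_{r,a}$ into $S_{r,a}$ we have $u_n\in S_{r,a}$. Property (i) and $\widetilde c_\mu(a)=c_\mu(a)$ give $I_\mu(u_n)=I(v_n,\theta_n)\to c_\mu(a)$, which is (1). For (2), differentiating \eqref{e5.2} in $\theta$ and using the dilation identities for the quantities occurring there, together with $q\delta_{q,s}=\tfrac{3(q-2)}{2s}$ and $\tfrac{3(2^*_s-2)}{2}=s2^*_s$, one verifies the pointwise identity $\partial_\theta I(u,\theta)=P_\mu(\theta\star u)$; in particular $P_\mu(u_n)=\partial_\theta I(v_n,\theta_n)=\langle I'(v_n,\theta_n),(0,1)\rangle$. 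Since $(0,1)\in\widetilde T_{(v_n,\theta_n)}$ and $\|(0,1)\|_{\mathbb{H}}=1$, property (iii) yields $|P_\mu(u_n)|\le\tfrac2{\sqrt n}\to0$, which is (2).

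For (3), fix $z\in T_{u_n}$ with $\|z\|_H\le1$ and set $\widetilde z:=(-\theta_n)\star z$. As $\star$ preserves the $L^2$ inner product, $\langle v_n,\widetilde z\rangle_{L^2}=\langle u_n,z\rangle_{L^2}=0$, so $(\widetilde z,0)\in\widetilde T_{(v_n,\theta_n)}$. Since $u\mapsto\theta\star u$ is linear, the chain rule applied to $I(u,\theta)=I_\mu(\theta\star u)$ gives $\langle I'(v_n,\theta_n),(\widetilde z,0)\rangle=\langle I_\mu'(\theta_n\star v_n),\theta_n\star\widetilde z\rangle=\langle I_\mu'(u_n),z\rangle$, whence by (iii), $|\langle I_\mu'(u_n),z\rangle|\le\tfrac2{\sqrt n}\|\widetilde z\|_H$. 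From \eqref{e5.3}, $\|\widetilde z\|_H^2=e^{-2s\theta_n}\|(-\Delta)^{s/2}z\|_2^2+\|z\|_2^2\le\max\{e^{-2s\theta_n},1\}\|z\|_H^2$, and $\theta_n\to0$ bounds the prefactor uniformly in $n$. Hence $\langle I_\mu'(u_n),z\rangle\to0$ uniformly over $\{z\in T_{u_n}:\|z\|_H\le1\}$, which is (3).

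I expect the only genuine work --- beyond invoking Propositions~\ref{Prop 5.3} and \ref{Prop 5.4} --- to be bookkeeping: checking $\partial_\theta I=P_\mu\circ(\theta\star\cdot)$ and $\langle I'(v_n,\theta_n),(\widetilde z,0)\rangle=\langle I_\mu'(u_n),z\rangle$ through the dilation, and observing that the boundedness of $\{\theta_n\}$ --- which is automatic once the approximating paths are chosen with zero $\R$-component --- is exactly what renders the estimate in (3) uniform. No compactness is used here; the strong $H^s$-convergence of $\{u_n\}$ will be treated afterwards via the concentration--compactness analysis.
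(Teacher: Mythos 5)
Your proposal is correct and follows essentially the same route as the paper: apply Proposition~\ref{Prop 5.4} to paths of the form $(\gamma_n,0)$, deduce $\theta_n\to0$, set $u_n=\theta_n\star v_n$, identify $\partial_\theta I(v_n,\theta_n)$ with $P_\mu(u_n)$, and transfer the tangential derivative bound via $\widetilde z=(-\theta_n)\star z$ using that $\star$ preserves the $L^2$ pairing and that $\theta_n\to0$ controls $\|\widetilde z\|_H$. Your write-up is in fact slightly more careful than the paper's at two points: you make explicit why property (ii) forces $|\theta_n|\le n^{-1/2}$, and you write the correct rescaling $\widetilde z=(-\theta_n)\star z$ where the paper's displayed formula for $\widetilde z_n$ contains a typo in the exponent.
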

\bp By Proposition \ref{Prop 5.3},
$\widetilde{c}_{\mu}(a)={c}_{\mu}(a)$. Hence, we can take $\{h_n =
((h_n)_1, 0)\}\in\widetilde{\Gamma}_a$ so as to
\[\max_{t\in[0,1]}I(h_n(t))\leq \widetilde{c}_{\mu}(a)+\frac{1}{n}.\]
It follows from Proposition \ref{Prop 5.4} that, there exists a
sequence $\{(v_n,\theta_n)\}\subset S_{r,a}\times\R$ such that as
$n\rightarrow\infty$, one has \be\label{e5.9}
I(v_n,\theta_n)\rightarrow {c}_{\mu}(a),~~~\theta_n\rightarrow0;\ee
 \be\label{e5.10}
(I|_{S_{r,a}\times\R})'(v_n,\theta_n)\rightarrow0.\ee Set $ u_n =
\theta_n\star v_n.$  Then, $I_{\mu}(u_n) = I(v_n, \theta_n),$ and by
\eqref{e5.9}, item (1) holds. To prove  conclusion (2), we utilize
\[\begin{split}
\partial_{\theta}I(v_n, \theta_n)&=se^{2s\theta_n}\|v_n\|^2+\frac{(3-2t)\lambda}{4}e^{(3-2t)\theta_n}\int_{\R^3}\phi_{v_n}v_n^2dx-
\frac{3\mu(q-2)}{2q}e^{(\frac{3q}{2}-3)\theta_n}\int_{\R^3}|v_n|^qdx\\
&\hspace{0.45cm}-\frac{3(2^*_s-2)}{22^*_{s}}e^{\frac{3(2^*_{s}-2)}{2}\theta_n}\int_{\R^3}|v_n|^{2^*_{s}}dx\\
&=s\|(-\Delta)^{\frac{s}{2}}u_n\|^2+\frac{(3-2t)\lambda}{4}\int_{\R^3}\phi_{u_n}^tu_n^2dx-\frac{3\mu(q-2)}{2q}\int_{\R^3}|u_n|^qdx\\
&\hspace{0.45cm}-\frac{3(2^*_s-2)}{22^*_{s}}\int_{\R^3}|u_n|^{2^*_{s}}dx\\
&=P_{\mu}(u_n)
\end{split}
\] which implies  item (2) by \eqref{e5.10}. To
show item (3), we set $z_n\in T_{u_n}.$  Then,

\[\begin{split}
I_{\mu}'(u_n)z_n&=\iint_{\R^6}\frac{(u_n(x)-u_n(y))(z_n(x)-z_n(y))}{|x-y|^{3+2s}}dxdy+\lambda\int_{\R^3}\phi_{u_n}^tu_nz_ndx\\
&\hspace{0.45cm}-\mu\int_{\R^3}|u_n|^{q-2}u_nz_ndx-\int_{\R^3}|u_n|^{2^*_s-2}u_nz_ndx\\
&=e^{\frac{(4s-3)\theta_n}{2}}\iint_{\R^6}\frac{(v_n(x)-v_n(y))(z_n(e^{-\theta_n}x)-z_n(e^{-\theta_n}y))}{|x-y|^{3+2s}}dxdy
\\
&\hspace{0.45cm}+e^{\frac{3-4t}{2}\theta_n}\int_{\R^3}\phi_{v_n}v_n(x)z_n(e^{-\theta_n}
x)dx-\mu
e^{\frac{3(q-3)}{2}\theta_n}\int_{\R^3}|v_n|^{q-2}v_n(x)z_n(e^{-\theta_n}x)dx\\
&\hspace{0.45cm}-e^{\frac{3(2^*_s-3)}{2}\theta_n}\int_{\R^3}|v_n|^{2^*_s-2}v_n(x)z_n(e^{-\theta_n}x)dx.
\end{split}
\]
Denote by $\widetilde{z}_n(x) =
e^{-\frac{3s}{2}}z_n(e^{-\theta_n}x),$ then we get
\[\langle I_{\mu}'(u_n),z_n\rangle_{H^{-1}\times H}=\langle I'(v_n,\theta_n),(\widetilde{z}_n,0)\rangle_{\mathbb{H}^{-1}\times\mathbb{H}}.\]
It is easy to check that
\[\begin{split}
\langle v_n,\widetilde{z}_n\rangle_{L^2}&=\int_{\R^3}v_n(x)
e^{-\frac{3s}{2}}z_n(e^{-\theta_n}x)dx\\
&=\int_{\R^3}v_n(e^{\theta_n}x)
e^{\frac{3s}{2}}z_n(x)dx\\
&=\int_{\R^3}u_n(x) z_n(x)dx=0
\end{split}
\]Therefore, we see that $(\widetilde{z}_n, 0)\in \widetilde{T}_{(v_n,\theta_n)}$. On the other hand,
\[ \|(\widetilde{z}_n, 0)\|_{\mathbb{H}}^2= \|\widetilde{z}_n\|_{H}^2=\|{z}_n\|_{2}^2+ e^{-2s\theta_n}\|{z}_n\|^2\leq C\|z_n\|^2,
              \]
where the last inequality follows by $\theta_n\rightarrow 0$.
Consequently, we conclude item (3). \ep

\vskip0.1in
 \par\noindent {\bf Remark 5.1} {\em From Propositions \ref{Prop 5.4},\ref{Prop 5.5}, we know that  $u_n := \theta_n\star v_n \subset
S_{r,a}$ is a (PS) sequence for  $I_{\mu}$  with the level
$c_{\mu}(a)$, that is \be\label{e5.11} I_{\mu}(u_n)\rightarrow
 c_{\mu}(a)~~~\mbox{as}~~~ n\rightarrow+\infty,\ee and
\be\label{e5.12}(I_{\mu}|_{S_{r,a}})' (u_n)\rightarrow 0
~~~\mbox{as}~~~ n\rightarrow+\infty.\ee }

\bl\label{Lemma 5.6} The (PS) sequence  $\{u_n\}$ mentioned in
Remark 5.1 is bounded in $H^s_{rad}(\R^3).$   Moreover, suppose that
$c_{\mu}(a)<\frac{s}{3}S^{\frac{3}{2s}}$, and $\lambda<\lambda^*_1$
for some $\lambda^*_1>0,$ then
$\lim_{n\rightarrow+\infty}\alpha_n=\alpha<0$.
 \el
\bp From Remark 5.1 we see that $I_{\mu}(u_n)$ is bounded. In fact,
by   $P_{\mu}(u_n)\rightarrow0$ as $n\rightarrow\infty,$ we have
\[\left|(1+2t)I_{\mu}(u_n)+P_{\mu}(u_n)\right|\leq 3c_{\mu}(a),\]which implies that,
\be\label{e5.13}
\begin{split}
&\frac{1+2s+2t}{2}\|(-\Delta)^{\frac{s}{2}}u_n\|_2^2+\lambda\int_{\R^3}\phi_{u_n}^tu^2_ndx-\mu\left(\frac{1+2t}{2}+s\delta_{q,s}\right)\int_{\R^3}|u_n|^qdx\\
&\hspace{0.45cm}-\left(\frac{1+2t}{2^*_s}+s\right)\int_{\R^3}|u_n|^{2^*_s}dx\geq
-3c_{\mu}(a). \end{split} \ee In view of the boundedness of
$I_{\mu}(u_n),$ we have
\be\label{e5.14}\|(-\Delta)^{\frac{s}{2}}u_n\|^2+\frac{\lambda}{2}\int_{\R^3}\phi_{u_n}^tu^2_ndx\leq
6c_{\mu}(a)+\frac{2\mu}{q} \int_{\R^3}|u_n|^qdx+\frac{2}{2^*_s}
\int_{\R^3}|u_n|^{2^*_s}dx.\ee By \eqref{e5.13}-\eqref{e5.14}, we
 obtain
\[\frac{2s+2t-3}{4}\int_{\R^3}\phi_{u_n}^tu^2_ndx+\mu \frac{(\delta_{q,s}-2)s}{q}\int_{\R^3}|u_n|^qdx+
 \frac{(2^*_s-2)s}{2^*_s}\int_{\R^3}|u_n|^{2^*_s}dx\leq 3c_{\mu}(a)(2+2s+2t).\]
Note that $2s+2t>3, q>2+\frac{4s}{3}$, we have that $
q\delta_{q,s}-2>0,$  and so
\[\int_{\R^3}\phi_{u_n}^tu^2_ndx,~~~\int_{\R^3}|u_n|^qdx~~~\mbox{and}~~~\int_{\R^3}|u_n|^{2^*_s}dx\]are
all bounded. Thus, $\|(-\Delta)^{\frac{s}{2}}u_n\|_2\leq R_2$ for
some $R_2>0$ independently on $n\in\mathbb{N}$. Since
 $\{u_n\}\subset S_{r,a},$  we see that $\{u_n\}$ is bounded in $H^s_{rad}(\R^3).$
 Thus,   passing to a subsequence,
 and we may assume that
$u_n\rightharpoonup u$ for some $u\in H^s_{rad}(\R^3)$, and so
$u_n\rightarrow u $ in $L^p(\R^3), \forall p\in (2,2^*_s)$.

\par Now, we set the functional
 $\Phi: ~H^s_{rad}(\R^3) \rightarrow \R$ as
\[\Phi(u)=\frac{1}{2}\int_{\R^3}|u|^2dx,\]then $S_{r,a} = \Phi^{-1}\left(\frac{a^2}{2}\right)$. As a result,  it can be derived  from   Proposition
5.12 \cite{Willem} that there is a sequence $\{\alpha_n\}\subset\R$
such that
\[ I_{\mu}'(u_n)-\alpha_n\Phi'(u_n) \rightarrow
0~~\mbox{in}~H^{-s}_{rad}(\R^3)~~\mbox{as}~~n\rightarrow\infty.\]
 That is, we have
\begin{equation}\label{e5.15}
(-\Delta)^su_n+\phi_{u_n}^tu_n-\mu|u_n|^{q-2}u_n-|u_n|^{2^*_s-2}u_n=\alpha_n
u_n+o_n(1) ~~\mbox{in}~H^{-s}_{rad}(\R^3),
\end{equation}Similar to the proof of Lemma \ref{Lemma 4.3}, we  know that $u$ solves the equation
\begin{equation}\label{e5.16}
(-\Delta)^su+\phi^t_{u}u-\mu|u|^{q-2}u-|u|^{2^*_s-2}u=\alpha u.
\end{equation}
 Moreover,
$u\not\equiv0$. In fact, argue by contradiction that $u\equiv0$.
Then $u_n\rightarrow0$ in $L^p(\R^3),~ \forall ~p\in (2,2^*_s)$, and
by $P_{\mu}(u_n)=o_n(1)$, \eqref{e3.3}, we have
\[\begin{split}
o_n(1)&=s\|u_n\|^2+\lambda\frac{3-2t}{4}\int_{\R^3}\phi^t_{u_n}u_n^2dx-\mu
s\delta_{q,s}\int_{\R^3}|u_n|^qdx-s\int_{\R^3}|u_n|^{2^*_s}dx\\
&=s\|u_n\|^2-s\int_{\R^3}|u_n|^{2^*_s}dx+o_n(1).
\end{split}
\]We may assume that
$\lim_{n\rightarrow+\infty}\|u_n\|^2=\lim_{n\rightarrow+\infty}\int_{\R^3}|u_n|^{2^*_s}dx=\vartheta\geq0.$
 Thus, we have
 \be\label{e5.17}\begin{split}
c_{\mu}(a)+o_n(1)&=I_{\mu}(u_n)\\
  &=\frac{1}{2}\int_{\R^3}|(-\Delta )^\frac{s}{2}
u_n|^2dx+\frac{\lambda}{4}
\int_{\R^3}\phi^t_{u_n}u_n^2dx-\frac{\mu}{q}\int_{\R^3}|u_n|^qdx
-\frac{1}{2^*_s}\int_{\R^3}|u_n|^{2^*_s}dx\\
&=\frac{1}{2}\vartheta-\frac{1}{2^*_s}\vartheta+o_n(1)=\frac{s}{3}\vartheta+o_n(1).
\end{split}
\ee On the other hand, by the Sobolev inequality \eqref{e3.1}, we
have $\vartheta\geq S \vartheta^{\frac{2}{2^*_s}}.$ Then we have two
possible cases: (i) $\vartheta=0$; (ii) $\vartheta\geq
S^{\frac{3}{2s}}$.
\par If $\vartheta=0$, then by \eqref{e5.17} we get $I_{\mu}(u_n)\rightarrow
0$, which contradicts  to $I_{\mu}(u_n)\rightarrow c_{\mu}(a)>0.$
Now if the second case $\vartheta\geq S^{\frac{3}{2s}}$ occurs, then
by \eqref{e5.17} we get $I_{\mu}(u_n)\rightarrow
\frac{s}{3}\vartheta\geq \frac{s}{3}S^{\frac{3}{2s}}$, which
contradicts  to $I_{\mu}(u_n)\rightarrow
 c_{\mu}(a)<\frac{s}{3}S^{\frac{3}{2s}}$. Hence, $u\not\equiv0.$
 Moreover, by \eqref{e5.15} and $P_{\mu}(u_n)=o_n(1)$, we have
 \be\label{A1}
s\alpha_n
\|u_n\|_2^2=\lambda\frac{2t+4s-3}{4}\int_{\R^3}\phi^t_{u_n}u_n^2dx+\frac{q(3-2s)-6}{2q}\mu\int_{\R^3}|u_n|^qdx+o_n(1).
\ee Since  $\{u_n\}\subset S_{r,a}$ is bounded in $H^s_{rad}(\R^3),$
then by Lemma \ref{Lemma 3.6} and \eqref{A1},  we derive that
$\{\alpha_n\}$ is bounded and
 $\lim_{n\rightarrow+\infty}\alpha_n=\alpha\in\R$.
  By a  similar argument  as in  \eqref{e4.32} and
  \eqref{e4.33}, for all $n\in\mathbb{N}$, we have
\be \label{A2}\begin{split} T_1\leq \int_{\R^3}|u_n|^qdx&\leq
{C}(q,s) \|(-\Delta)^{\frac{s}{2}}u_n\|_2^{q\delta_{q,s}}\|u_n\|_2^{q(1-\delta_{q,s})}\\
&\leq
 {C}(q,s)R_2^{q\delta_{q,s}}a^{q(1-\delta_{q,s})},
\end{split}
\ee and
 \be \label{A3}\begin{split}
\int_{\R^3}\phi_{u_n}^tu_n^2dx\leq
\Gamma_t\|u_n\|_{\frac{12}{3+2t}}^4&\leq
\Gamma_tC\left({12}/{3+2t},s\right)^{\frac{3+2t}{3}}\|(-\Delta)^{\frac{s}{2}}u_n\|_2^{\frac{3-2t}{s}}\|u_n\|_2^{\frac{2t+4s-3}{s}}\\
&\leq\Gamma_tC\left({12}/{3+2t},s\right)^{\frac{3+2t}{3}}R_2^{\frac{3-2t}{s}}a^{\frac{2t+4s-3}{s}}\\
&:=T_2,
\end{split}
\ee where $T_2=Q_2(s,t,R_2,a)>0.$ We define the positive constant
\be\label{A4} \lambda_1^*:=\frac{2[6-q(3-2s)]\mu
T_1}{q(2t+4s-3)T_2}. \ee Therefore, if $\lambda<\lambda_1^*$, we get
\[\lambda q(2t+4s-3)T_2<2[6-q(3-2s)]\mu T_1.\]
Hence, by \eqref{A2},\eqref{A3} we see that \be\label{A5}\lambda
\frac{2t+4s-3}{4}\int_{\R^3}\phi_{u_n}^tu_n^2dx<\frac{[6-q(3-2s)]\mu}{2q}
\int_{\R^3}|u_n|^qdx.\ee Taking the limit in \eqref{A4} as
$n\rightarrow+\infty,$ and applying Lemmas \ref{Lemma
3.3},\ref{Lemma 3.6}, we obtain \be\label{A6}\lambda
\frac{2t+4s-3}{4}\int_{\R^3}\phi_{u}^tu^2dx<\frac{[6-q(3-2s)]\mu}{2q}
\int_{\R^3}|u|^qdx.\ee
 Consequently, passing the limit in \eqref{A1} as
$n\rightarrow+\infty,$ and using \eqref{A6} we deduce that
\[
s\alpha
 a^2=\lambda\frac{2t+4s-3}{4}\int_{\R^3}\phi^t_{u}u^2dx+\frac{q(3-2s)-6}{2q}\mu\int_{\R^3}|u|^qdx<0.
\] Thus, we have that $\alpha< 0$, if $\lambda<\lambda^*_1$ small.\ep

 \bl\label{Lemma 5.7} If $2+\frac{4s}{3} < q < 2^*_s,$ and inequality \eqref{e2.5} holds,
 then there $\lambda^*_2>0$, such that $ c_{\mu}(a) <\frac{s}{3}S^{\frac{3}{2s}}$ for $\lambda<\lambda^*_2$ small. \el
 \bp From \cite{CT}, we know that $S$ defined in \eqref{e3.1} is attained in $\R^3$
 by functions
 \[
 U_{\varepsilon}(x)=\frac{C(s)\varepsilon^{3-2s}}{(\varepsilon^2+|x|^2)^{\frac{3-2s}{2}}}\]
  for any $\varepsilon>0$ and   $C(s)$ being normalized constant such that
\[\|(-\Delta)^{\frac{s}{2}}U_{\varepsilon}\|_2^2=\int_{\R^3}|U_{\varepsilon}|^{2^*_s}dx=S^{\frac{3}{2s}}.\]
 \par  We define $u_{\varepsilon}  =\varphi
 U_{\varepsilon}$,  and
\[v_{\varepsilon}=a\frac{u_{\varepsilon}}{\|u_{\varepsilon}\|_2}\in S_a\cap H_{rad}^s(\R^3),\]where  $\varphi(x)\in C_0^{\infty}(B_2(0))$ is a radial cutoff function
such that $0\leq\varphi(x)\leq 1$ and $\varphi(x)\equiv 1$ on
 $B_1(0).$
From Proposition 21 and Proposition 22 in \cite{SV}, we have
\be\label{e5.18} \int_{\R^3}|(-\Delta)^{\frac{s}{2}}
u_{\varepsilon}|^2dx= S^{\frac{3}{2s}}+O(\varepsilon^{3-2s}).\ee

 \be\label{e5.19}
\int_{\R^3}|u_{\varepsilon}|^{2^*_s}dx=
S^{\frac{3}{2s}}+O(\varepsilon^{3}).\ee For any $p>1$, by a direct
computation \cite{Teng}, we obtain the following estimations:
 \be\label{e5.20}
\int_{\R^N}|u_{\varepsilon}|^pdx  =\begin{cases}
O(\varepsilon^{\frac{3(2-p)+2sp}{2}}),&\mbox{if}~~p>\frac{3}{3-2s};\vspace{0.1cm}\\
O(\varepsilon^{\frac{3}{2}}|\log\varepsilon|),&\mbox{if}~~p=\frac{3}{3-2s};\vspace{0.1cm}\\
O(\varepsilon^{\frac{(3-2s)p}{2}}),&\mbox{if}~~p<\frac{3}{3-2s},
\end{cases}
 \ee
and especially,
 \be\label{e5.21}\begin{split}
\int_{\R^3}|u_{\varepsilon}|^2dx & =\left\{\begin{array}{ll}
 C\varepsilon^{2s},&\mbox{if}~~0<s<\frac{3}{4};\vspace{0.1cm}\\
 C\varepsilon^{2s}|\log\varepsilon|,&\mbox{if}~~s=\frac{3}{4};\vspace{0.1cm}\\
 C\varepsilon^{3-2s},&\mbox{if}~~\frac{3}{4}<s<1.
\end{array}
\right.
\end{split}
 \ee

Define the function
 \be\label{e5.22}\begin{split}
 \Psi^{\mu}_{v_{\varepsilon}}(\theta)&:=I_{\mu} ((\theta\star v_{\varepsilon}))=\frac{e^{2s\theta}}{2}\|v_{\varepsilon}\|^2+
 \frac{e^{(3-2t)\theta}}{4}\lambda\int_{\R^3}\phi^t_{v_{\varepsilon}}v_{\varepsilon}^2dx-\frac{\mu}{q}e^{\frac{3(q-2)}{2}\theta}\int_{\R^3}|v_{\varepsilon}|^qdx\\
&\hspace{0.45cm}-\frac{1}{2^*_{s}}e^{2^*_s
s\theta}\int_{\R^3}|v_{\varepsilon}|^{2^*_{s}}dx,
\end{split}\ee
then it is easy to see that $
\Psi^{\mu}_{v_{\varepsilon}}(\theta)\rightarrow 0^+$ as
$\theta\rightarrow-\infty$, and
$\Psi^{\mu}_{v_{\varepsilon}}(\theta)\rightarrow -\infty$ as
$\theta\rightarrow+\infty$. Therefore,
$\Psi^{\mu}_{v_{\varepsilon}}$ can obtain its global positive
maximum at some $\theta_{\varepsilon,\mu} >0$.    A direct
computation yields that
 \be\label{e5.23}\begin{split}&(\Psi^{\mu}_{v_{\varepsilon}})'(\theta)\\
 &=se^{2s\theta}\|v_{\varepsilon}\|^2+\frac{3-2t}{4}e^{(3-2t)\theta}\lambda\int_{\R^3}\phi^t_{v_{\varepsilon}}v_{\varepsilon}^2dx\\
&\hspace{0.45cm}-
\frac{3\mu(q-2)}{2q}e^{\frac{3(q-2)}{2}\theta}\int_{\R^3}|v_{\varepsilon}|^qdx-se^{2^*_{s}
s\theta}\int_{\R^3}|v_{\varepsilon}|^{2^*_s}dx\\
&=s\|\theta\star
v_{\varepsilon}\|^2+\frac{3-2t}{4}\lambda\int_{\R^3}\phi^t_{\theta\star
v_{\varepsilon}}|\theta\star v_{\varepsilon}|^2dx-
\frac{3\mu(q-2)}{2q}\int_{\R^3}|\theta\star
v_{\varepsilon}|^qdx-s\int_{\R^3}|\theta\star
v_{\varepsilon}|^{2^*_s}dx\\
&=P_{\mu}(\theta\star v_{\varepsilon});\end{split}\ee
and\[\begin{split}
(\Psi^{\mu}_{v_{\varepsilon}})''(\theta)&=2s^2e^{2s\theta}\|v_{\varepsilon}\|^2+\frac{(3-2t)^2}{4}e^{(3-2t)\theta}\lambda\int_{\R^3}\phi^t_{v_{\varepsilon}}v_{\varepsilon}^2dx\\
&\hspace{0.45cm}-\mu
 qs^2\delta_{q,s}^2e^{\frac{3(q-2)}{2}\theta}\int_{\R^3}|v_{\varepsilon}|^qdx-2^*_{s}s^2e^{2^*_{s}
s\theta}\int_{\R^3}|v_{\varepsilon}|^{2^*_s}dx.\end{split}
\]
Let $\theta_{\varepsilon,\mu}$ be the maximum point of
$\Psi^{\mu}_{v_{\varepsilon}}(\theta)$, then
 $\theta_{\varepsilon,\mu}$ is unique. In fact, combining with $(\Psi^{\mu}_{v_{\varepsilon}})'(\theta_{\varepsilon,\mu})=0,$ and $3-2t-2s<0, 2-q\delta_{q,s}<0, 2-2^*_{s}<0,$ we have
\[\begin{split}
&(\Psi^{\mu}_{v_{\varepsilon}})''(\theta_{\varepsilon,\mu})\\
&=2s^2e^{2s\theta_{\varepsilon,\mu}}\|v_{\varepsilon}\|^2+
\frac{(3-2t)^2}{4}e^{(3-2t)\theta_{\varepsilon,\mu}}\lambda\int_{\R^3}\phi^t_{v_{\varepsilon}}v_{\varepsilon}^2dx\\
&\hspace{0.45cm}- \mu q
s^2\delta_{q,s}^2e^{\frac{3(q-2)}{2}\theta_{\varepsilon,\mu}}\int_{\R^3}|v_{\varepsilon}|^qdx-2^*_{s}s^2e^{2^*_{s}
s\theta_{\varepsilon,\mu}}\int_{\R^3}|v_{\varepsilon}|^{2^*_s}dx\\
&=2s^2\|\widetilde{u}_{\varepsilon}\|^2+\frac{(3-2t)^2}{4}\lambda\int_{\R^3}\phi^t_{\widetilde{u}_{\varepsilon}}\widetilde{u}_{\varepsilon}^2dx-
\mu
s^2q\delta_{q,s}^2\int_{\R^3}|\widetilde{u}_{\varepsilon}|^qdx-2^*_{s}s^2\int_{\R^3}|\widetilde{u}_{\varepsilon}|^{2^*_s}dx\\
&=\frac{(3-2t)(3-2t-2s)}{4}\lambda\int_{\R^3}\phi^t_{\widetilde{u}_{\varepsilon}}\widetilde{u}_{\varepsilon}^2dx+\mu
s^2\delta_{q,s}[2-q\delta_{q,s}]\int_{\R^3}|\widetilde{u}_{\varepsilon}|^qdx+s^2[2-2^*_{s}]\int_{\R^3}|\widetilde{u}_{\varepsilon}|^{2^*_s}dx<0,
\end{split}
\]where $\widetilde{u}_{\varepsilon}=\theta_{\varepsilon,\mu}\star
v_{\varepsilon},$ and the uniqueness of $\theta_{\varepsilon,\mu}$
follows. Using
$(\Psi^{\mu}_{v_{\varepsilon}})'(\theta_{\varepsilon,\mu})=P_{\mu}(\theta_{\varepsilon,\mu}\star
v_{\varepsilon})=0$ again,  we have \be\label{e5.24}\begin{split}
se^{2^*_{s} s\theta_{\varepsilon,\mu}}
 \int_{\R^3}|v_{\varepsilon}|^{2^*_s}dx&=se^{2s\theta_{\varepsilon,\mu}}\|v_{\varepsilon}\|^2+\lambda\frac{3-2t}{4}e^{(3-2t)\theta_{\varepsilon,\mu}}
\int_{\R^3}\phi^t_{v_{\varepsilon}}v_{\varepsilon}^2dx\\
&\hspace{0.45cm}-\frac{3\mu(q-2)}{2q}e^{\frac{3(q-2)}{2}\theta_{\varepsilon,\mu}}\int_{\R^3}|v_{\varepsilon}|^qdx\\
&\leq
se^{2s\theta_{\varepsilon,\mu}}\|v_{\varepsilon}\|^2+\lambda\frac{3-2t}{4}e^{(3-2t)\theta_{\varepsilon,\mu}}\int_{\R^3}\phi^t_{v_{\varepsilon}}v_{\varepsilon}^2dx\\
 &=e^{2s\theta_{\varepsilon,\mu}}\left(s\|v_{\varepsilon}\|^2+\lambda\frac{3-2t}{4}e^{(3-2t-2s)\theta_{\varepsilon,\mu}}\int_{\R^3}\phi^t_{v_{\varepsilon}}v_{\varepsilon}^2dx\right)\\
&\leq e^{2s\theta_{\varepsilon,\mu}}
2\max\left\{s\|v_{\varepsilon}\|^2,
 \lambda\frac{3-2t}{4}e^{(3-2t-2s)\theta_{\varepsilon,\mu}}\int_{\R^3}\phi^t_{v_{\varepsilon}}v_{\varepsilon}^2dx\right\}.
\end{split}
\ee \par In the sequel,  we distinguish  the following  two possible
cases.

 \vskip0.1in \par\noindent
{\bf Case 1.} $s\|v_{\varepsilon}\|^2>\lambda
\frac{3-2t}{4}e^{(3-2t-2s)\theta_{\varepsilon,\mu}}\int_{\R^3}\phi^t_{v_{\varepsilon}}v_{\varepsilon}^2dx$.
\par In this case, we have from \eqref{e5.24} that \be\label{e5.25} se^{2^*_{s}
s\theta_{\varepsilon,\mu}}
\int_{\R^3}|v_{\varepsilon}|^{2^*_s}dx<e^{2s\theta_{\varepsilon,\mu}}
2s\|v_{\varepsilon}\|^2 \Longrightarrow
e^{(2^*_{s}-2)s\theta_{\varepsilon,\mu}}\leq
\frac{2\|v_{\varepsilon}\|^2}{\|v_{\varepsilon}\|_{2^*_s}^{2^*_s}},\ee
and from
 $(\Psi^{\mu}_{v_{\varepsilon}})'(\theta_{\varepsilon,\mu})=0$, we
  have
\be\label{e5.26}\begin{split}&e^{(2^*_{s}-2)s\theta_{\varepsilon,\mu}}\\
&=\frac{\|v_{\varepsilon}\|^2}{\|v_{\varepsilon}\|_{2^*_s}^{2^*_s}}
+\lambda\frac{3-2t}{4s}\frac{e^{(3-2t-2s)\theta_{\varepsilon,\mu}}
\int_{\R^3}\phi^t_{v_{\varepsilon}}v_{\varepsilon}^2dx}{\|v_{\varepsilon}\|_{2^*_s}^{2^*_s}}
-\mu\delta_{q,s}e^{(q\delta_{q,s}-2)s\theta_{\varepsilon,\mu}}\frac{\|v_{\varepsilon}\|_{q}^{q}}{\|v_{\varepsilon}\|_{2^*_s}^{2^*_s}}\\
&\geq
\frac{\|v_{\varepsilon}\|^2}{\|v_{\varepsilon}\|_{2^*_s}^{2^*_s}}-\mu\delta_{q,s}\left(\frac{2\|v_{\varepsilon}\|^2}{\|v_{\varepsilon}\|_{2^*_s}^{2^*_s}}\right)^{\frac{q\delta_{q,s}-2}{2^*_s-2}}\frac{\|v_{\varepsilon}\|_{q}^{q}}{\|v_{\varepsilon}\|_{2^*_s}^{2^*_s}} \\
&=\frac{\|u_{\varepsilon}\|_2^{2^*_s-2}\|u_{\varepsilon}\|^2}{a^{2^*_s-2}\|u_{\varepsilon}\|_{2^*_s}^{2^*_s}}-\mu\delta_{q,s}\left(\frac{2\|u_{\varepsilon}\|^{2^*_s-2}_2}{a^{2^*_s-2}}\frac{\|u_{\varepsilon}\|^2}{\|u_{\varepsilon}\|^{2^*_s}_{2^*_s}}\right)^{\frac{q\delta_{q,s}-2}{2^*_s-2}}
 \frac{\|u_{\varepsilon}\|_{q}^{q}}{\|u_{\varepsilon}\|_{2^*_s}^{2^*_s}}
\frac{\|u_{\varepsilon}\|_{2}^{2^*_s-q}}{a^{2^*_s-q}}
\end{split}\ee
\[\begin{split}
&=\frac{\|u_{\varepsilon}\|_2^{2^*_s-2}(\|u_{\varepsilon}\|^2)^{\frac{q\delta_{q,s}-2}{2^*_s-2}}}{a^{2^*_s-2}\|u_{\varepsilon}\|_{2^*_s}^{2^*_s}}\left[(\|u_{\varepsilon}\|^2)^{\frac{2^*_s-q\delta_{q,s}}{2^*_s-2}}-
\frac{\mu\delta_{q,s}2^{\frac{q\delta_{q,s}-2}{2^*_s-2}}a^{q(1-\delta_{q,s})}\|u_{\varepsilon}\|^q_q}{(\|u_{\varepsilon}\|_2)^{q(1-\delta_{q,s})}(\|u_{\varepsilon}\|_{2^*_s}^{2^*_s})^{\frac{q\delta_{q,s}-2}{2^*_s-2}}}\right].
\end{split}
\] Notice that, by \eqref{e5.18}-\eqref{e5.21}, there
exist positive constants $C_1, C_2$ and $C_3$ depending on $s$ and
$q$ such that
\be\label{e5.27}(\|u_{\varepsilon}\|^2)^{\frac{2^*_s-q\delta_{q,s}}{2^*_s-2}}\geq
C_1,~~C_2\leq(\|u_{\varepsilon}\|_{2^*_s}^{2^*_s})^{\frac{q\delta_{q,s}-2}{2^*_s-2}}\leq
\frac{1}{C_2}.\ee and \be\label{e5.28}
\frac{\|u_{\varepsilon}\|^{q}_q}{\|u_{\varepsilon}\|_2^{q(1-\gamma_{q,s})}}
 =\left\{\begin{array}{ll}
C_3\varepsilon^{3-\frac{3-2s}{2}q-sq(1-\gamma_{q,s})}=C_3,&\mbox{if}~~0<s<\frac{3}{4};\vspace{0.2cm}\\
C_3|\ln\varepsilon|^{\frac{q(\gamma_{q,s}-1)}{2}},&\mbox{if}~~s=\frac{3}{4};\vspace{0.2cm}\\
C_3\varepsilon^{3-\frac{3-2s}{2}q-\frac{(3-2s)q(1-\gamma_{q,s})}{2}},&\mbox{if}~~\frac{3}{4}<s<1;
\end{array} \right.
\ee Next, we show that
\be\label{e5.29}e^{(2^*_{s}-2)s\theta_{\varepsilon,\mu}}\geq
C\frac{\|u_{\varepsilon}\|^{2^*_s-2}_{2}}{a^{2^*_s-2}},\ee under
suitable conditions. To this aim, we distinguish the following three
 subcases.
\par {\em  Subcase (i).} $0<s<\frac{3}{4}.$  In this case, it holds that
\be\label{e5.30}3-\frac{3-2s}{2}q-sq(1-\delta_{q,s})=0,\ee and from
\eqref{e5.26}-\eqref{e5.28} we have
\[e^{(2^*_s-2)s\theta_{\varepsilon,\mu}}\geq
\frac{C\|u_{\varepsilon}\|^{2^*_s-2}_{2}}{a^{2^*_s-2}} \left[C_1
-\mu\delta_{q,s}a^{q(1-\gamma_{q,s})}2^{\frac{q\delta_{q,s}-2}{2^*_s-2}}\frac{C_3}{C_2}\right],\]
and we see that inequality \eqref{e5.29} holds only when
$\mu\gamma_{q,s}a^{q(1-\delta_{q,s})}<
C_1C_2(C_3)^{-1}2^{-\frac{q\delta_{q,s}-2}{2^*_s-2}}$. Thus, we have
to give a more precise estimate, let us   come back to \eqref{e5.26}
and observe that by well-known interpolation inequality, we have
\be\label{e5.31}
 \frac{\|u_{\varepsilon}\|^q_q}{(\|u_{\varepsilon}\|_2)^{q(1-\delta_{q,s})}(\|u_{\varepsilon}\|_{2^*_s}^{2^*_s})^{\frac{q\delta_{q,s}-2}{2^*_s-2}}}
 \leq \frac{(\|u_{\varepsilon}\|^{2^*_s}_{2^*_s})^{\frac{q-2}{2^*_s-2}}(\|u_{\varepsilon}\|^2_2)^{\frac{2^*_s-q}{2^*_s-2}}}{(\|u_{\varepsilon}\|_2)^{q(1-\delta_{q,s})}
 (\|u_{\varepsilon}\|_{2^*_s}^{2^*_s})^{\frac{q\delta_{q,s}-2}{2^*_s-2}}}
 =(\|u_{\varepsilon}\|_{2^*_s}^{2^*_s})^{\frac{q(1-\delta_{q,s})}{2^*_s-2}}.
\ee Therefore, by \eqref{e5.31} and \eqref{e5.26} we have
\be\label{e5.32}
e^{(2^*_{s}-2)s\theta_{\varepsilon,\mu}}\geq\frac{\|u_{\varepsilon}\|_2^{2^*_s-2}(\|u_{\varepsilon}\|^2)^{\frac{q\delta_{q,s}-2}{2^*_s-2}}}{a^{2^*_s-2}\|u_{\varepsilon}\|_{2^*_s}^{2^*_s}}
\left[(\|u_{\varepsilon}\|^2)^{\frac{2^*_s-q\delta_{q,s}}{2^*_s-2}}-\mu\delta_{q,s}2^{\frac{q\delta_{q,s}-2}{2^*_s-2}}a^{q(1-\delta_{q,s})}
 (\|u_{\varepsilon}\|_{2^*_s}^{2^*_s})^{\frac{q(1-\delta_{q,s})}{2^*_s-2}}\right].
\ee From the  estimations \eqref{e5.18},\eqref{e5.19}, we see that
the right hand side of \eqref{e5.32} is positive provided that
\[\begin{split}
\mu\delta_{q,s}a^{q(1-\gamma_{q,s})}2^{\frac{q\delta_{q,s}-2}{2^*_s-2}}&<\frac{\left(\|u_{\varepsilon}\|^2\right)^{\frac{2^*_s-q\gamma_{q,s}}{2^*_s-2}}
}{(\|u_{\varepsilon}\|_{2^*_s}^{2^*_s})^{\frac{q(1-\delta_{q,s})}{2^*_s-2}}}\\
&=\frac{\left(S^{\frac{3}{2s}}+O(\varepsilon^{3-2s})\right)^{\frac{2^*_s-q\gamma_{q,s}}{2^*_s-2}}
}{\left(S^{\frac{3}{2s}}+O(\varepsilon^3)\right)^{\frac{q(1-\delta_{q,s})}{2^*_s-2}}}=S^{\frac{3(2^*_s-q)}{2s(2^*_s-2)}}+O(\varepsilon^{3-2s}).
\end{split}
\]   Therefore, if $0<s<\frac{3}{4}$ and
 \be\label{e5.33}\mu{\delta_{q,s}}
a^{q(1-\delta_{q,s})}2^{\frac{q\delta_{q,s}-2}{2^*_s-2}}<{S^{\frac{3(2^*_s-q)}{2s(2^*_s-2)}}},\ee
we have
\[e^{(2^*_{s}-2)s\theta_{\widetilde{v}_{\varepsilon}}}\geq\frac{C\|u_{\varepsilon}\|^{2^*_{s}-2}_{2}}{a^{2^*_{s}-2}}.\]

  \par {\em  Subcase (ii).} $s=\frac{3}{4}$.  In this case, then we have $3<q<4$, and
\[
|\ln\varepsilon|^{\frac{q(\gamma_{q,s}-1)}{2}}=|\ln\varepsilon|^{\frac{q-2^*_s}{4s(3-2s)}}\rightarrow0~~
\mbox{as}~~\varepsilon\rightarrow0.\] Consequently,
\[\frac{\|u_{\varepsilon}\|^{q}_q}{\|u_{\varepsilon}\|_2^{q(1-\gamma_{q,s})}}
\leq
C_3\varepsilon^{3-\frac{3-2s}{2}q-sq(1-\gamma_{q,s})}|\ln\varepsilon|^{\frac{q(\gamma_{q,s}-1)}{2}}=o_{\varepsilon}(1).\]
Therefore, we get
\[e^{(2^*_{s}-2)s\theta_{{v}_{\varepsilon}}}\geq
 C\frac{\|u_{\varepsilon}\|^{2^*_{s}-2}_{2}}{a^{2^*_{s}-2}}\left[C_1
-\mu\gamma_{q,s}a^{q(1-\gamma_{q,s})}2^{\frac{q\delta_{q,s}-2}{2^*_s-2}}\frac{C_3}{C_2}o_{\varepsilon}(1)\right]
\geq\frac{C\|u_{\varepsilon}\|^{2^*_{s}-2}_{2}}{a^{2^*_{s}-2}}.\]

\par \par {\em  Subcase (iii).}  $\frac{3}{4}<s<1$. By the definition of $\delta_{q,s}$ and a direct computation
we infer to
\[\begin{split}&3-\frac{3-2s}{2}q-\frac{(3-2s)q(1-\gamma_{q,s})}{2}\\
&=(3-2s)\left[\frac{3}{3-2s}-q-\frac{3(q-2)}{4s}\right]=\frac{3-4s}{4s}\left[q-\frac{6}{3-2s}\right](3-2s)>0.
\end{split}\]
Thus,
 $\varepsilon^{3-\frac{3-2s}{2}q-\frac{(3-2s)q(1-\gamma_{q,s})}{2}}\rightarrow0~~\mbox{as}~~\varepsilon\rightarrow0,$ and so
\[\frac{\|u_{\varepsilon}\|^{q}_q}{\|u_{\varepsilon}\|_2^{q(1-\gamma_{q,s})}}
\leq
C\varepsilon^{3-\frac{3-2s}{2}q-\frac{(3-2s)q(1-\gamma_{q,s})}{2}}=o_{\varepsilon}(1).\]
Therefore, we conclude that,
\[e^{(2^*_{s}-2)s\theta_{{v}_{\varepsilon}}}\geq
 C\frac{\|u_{\varepsilon}\|^{2^*_{s}-2}_{2}}{a^{2^*_{s}-2}}\left[C_1
-\mu\gamma_{q,s}a^{q(1-\gamma_{q,s})}\frac{C_3}{C_2}o_{\varepsilon}(1)\right]
\geq\frac{C\|u_{\varepsilon}\|^{22^*_{\alpha,s}-2}_{2}}{a^{2^*_{s}-2}}.\]

\vskip0.2in

\par {\bf Case 2.} $s\|v_{\varepsilon}\|^2\leq
\lambda\frac{3-2t}{4}e^{(3-2t-2s)\theta_{\varepsilon,\mu}}\int_{\R^3}\phi^t_{v_{\varepsilon}}v_{\varepsilon}^2dx$.
\vskip0.1in
\par In this case, we have from \eqref{e5.24} that $
se^{2^*_{s} s\theta_{\varepsilon,\mu}}
\int_{\R^3}|v_{\varepsilon}|^{2^*_s}dx<e^{2s\theta_{\varepsilon,\mu}}
\frac{3-2t}{2}e^{(3-2t-2s)\theta_{\varepsilon,\mu}}\lambda\int_{\R^3}\phi^t_{v_{\varepsilon}}v_{\varepsilon}^2dx,$
which implies that \be\label{e5.34}
e^{(s2^*_{s}+2t-3)\theta_{\varepsilon,\mu}}\leq
\frac{3-2t}{2s}\frac{\lambda\int_{\R^3}\phi^t_{v_{\varepsilon}}v_{\varepsilon}^2dx}{\|v_{\varepsilon}\|_{2^*_s}^{2^*_s}},\ee
and from
 $(\Psi^{\mu}_{v_{\varepsilon}})'(\theta_{\varepsilon,\mu})=0$ and \eqref{e5.34}, together with \eqref{e3.2}-\eqref{e3.3} and H\"{o}lder inequality, we induce that
\be\label{e5.35}\begin{split} &e^{(2^*_{s}-2)s\theta_{\varepsilon,\mu}}\\
&=\frac{\|v_{\varepsilon}\|^2}{\|v_{\varepsilon}\|_{2^*_s}^{2^*_s}}
+\frac{3-2t}{4s}\frac{e^{(3-2t-2s)\theta_{\varepsilon,\mu}}\lambda
\int_{\R^3}\phi^t_{v_{\varepsilon}}v_{\varepsilon}^2dx}{\|v_{\varepsilon}\|_{2^*_s}^{2^*_s}}
-\mu\delta_{q,s}e^{(q\delta_{q,s}-2)s\theta_{\varepsilon,\mu}}\frac{\|v_{\varepsilon}\|_{q}^{q}}{\|v_{\varepsilon}\|_{2^*_s}^{2^*_s}}\\
&\geq
\frac{\|v_{\varepsilon}\|^2}{\|v_{\varepsilon}\|_{2^*_s}^{2^*_s}}-\mu\delta_{q,s}\left(\frac{3-2t}{2s}\frac{\lambda\int_{\R^3}\phi_{v_{\varepsilon}}v_{\varepsilon}^2dx}{\|v_{\varepsilon}\|_{2^*_s}^{2^*_s}}
\right)^{\frac{(q\delta_{q,s}-2)s}{s2^*_{s}+2t-3}}\frac{\|v_{\varepsilon}\|_{q}^{q}}{\|v_{\varepsilon}\|_{2^*_s}^{2^*_s}} \\
&\geq
\frac{\|v_{\varepsilon}\|^2}{\|v_{\varepsilon}\|_{2^*_s}^{2^*_s}}-\mu\delta_{q,s}\left(\frac{3-2t}{2s}\frac{
\lambda
 \Gamma_t\|v_{\varepsilon}\|_{\frac{12}{3+2t}}^4}{\|v_{\varepsilon}\|_{2^*_s}^{2^*_s}}
\right)^{\frac{(q\delta_{q,s}-2)s}{s2^*_{s}+2t-3}}\frac{\|v_{\varepsilon}\|_{q}^{q}}{\|v_{\varepsilon}\|_{2^*_s}^{2^*_s}} \\
&\geq
\frac{\|v_{\varepsilon}\|^2}{\|v_{\varepsilon}\|_{2^*_s}^{2^*_s}}-\mu\delta_{q,s}\left(\frac{(3-2t)\lambda
 \Gamma_t}{2s}\right)^{\frac{(q\delta_{q,s}-2)s}{s2^*_{s}+2t-3}}\left(\frac{
\|v_{\varepsilon}\|^{4\tau}_2\|v_{\varepsilon}\|_{2^*_s}^{4(1-\tau)}}{\|v_{\varepsilon}\|_{2^*_s}^{2^*_s}}
\right)^{\frac{(q\delta_{q,s}-2)s}{s2^*_{s}+2t-3}}\frac{\|v_{\varepsilon}\|_{q}^{q}}{\|v_{\varepsilon}\|_{2^*_s}^{2^*_s}}\\
&=
\frac{\|v_{\varepsilon}\|^2}{\|v_{\varepsilon}\|_{2^*_s}^{2^*_s}}-\mu\delta_{q,s}D(s,t)a^{{\frac{4\tau(q\delta_{q,s}-2)s}{s2^*_{s}+2t-3}}}\left(\frac{
 1}{\|v_{\varepsilon}\|_{2^*_s}^{2^*_s-4(1-\tau)}}
\right)^{\frac{(q\delta_{q,s}-2)s}{s2^*_{s}+2t-3}}\frac{\|v_{\varepsilon}\|_{q}^{q}}{\|v_{\varepsilon}\|_{2^*_s}^{2^*_s}}
\\
&=\frac{\|u_{\varepsilon}\|_2^{2^*_s-2}\|u_{\varepsilon}\|^2}{a^{2^*_s-2}\|u_{\varepsilon}\|_{2^*_s}^{2^*_s}}-\mu\delta_{q,s}D(s,t)a^{{\frac{4\tau(q\delta_{q,s}-2)s}{s2^*_{s}+2t-3}}}
\\
&\hspace{4.45cm}\times\left(\frac{\|u_{\varepsilon}\|_2^{2^*_s-4(1-\tau)}}{a^{2^*_s-4(1-\tau)}\|u_{\varepsilon}\|_{2^*_s}^{2^*_s-4(1-\tau)}}
\right)^{{\frac{(q\delta_{q,s}-2)s}{s2^*_{s}+2t-3}}}\frac{\|u_{\varepsilon}\|_{q}^{q}}{\|u_{\varepsilon}\|_{2^*_s}^{2^*_s}}
\frac{\|u_{\varepsilon}\|_{2}^{2^*_s-q}}{a^{2^*_s-q}}\\
&=\frac{\|u_{\varepsilon}\|_2^{2^*_s-2}\|u_{\varepsilon}\|^2}{a^{2^*_s-2}\|u_{\varepsilon}\|_{2^*_s}^{2^*_s}}-\frac{\mu\delta_{q,s}D(s,t)a^{{\frac{4\tau(q\delta_{q,s}-2)s
-(2^*_s-4(1-\tau))(q\delta_{q,s}-2)s}{s2^*_{s}+2t-3}}-2^*_s+q}}
{\|u_{\varepsilon}\|_{2^*_s}^{2^*_s+[2^*_s-4(1-\tau)]\frac{(q\delta_{q,s}-2)s}{s2^*_{s}+2t-3}}}
\\
&\hspace{6.45cm}\times
\|u_{\varepsilon}\|_2^{2^*_s-q+[2^*_s-4(1-\tau)]\frac{(q\delta_{q,s}-2)s}{s2^*_{s}+2t-3}}\|u_{\varepsilon}\|_{q}^{q}\\
&=\frac{\|u_{\varepsilon}\|_2^{2^*_s-2}(\|u_{\varepsilon}\|^2)^{\frac{q\delta_{q,s}-2}{2^*_s-2}}}{a^{2^*_s-2}\|u_{\varepsilon}\|_{2^*_s}^{2^*_s}}\\
\end{split}
\ee
\[\begin{split}
&\hspace{0.45cm}\times
\bigg[(\|u_{\varepsilon}\|^2)^{\frac{2^*_s-q\delta_{q,s}}{2^*_s-2}}-\frac{\mu\delta_{q,s}D(s,t)a^{{\frac{4\tau(q\delta_{q,s}-2)s
-(2^*_s-4(1-\tau))(q\delta_{q,s}-2)s}{s2^*_{s}+2t-3}}+q-2}}
{(\|u_{\varepsilon}\|^2)^{\frac{q\delta_{q,s}-2}{2^*_s-2}}\|u_{\varepsilon}\|_{2^*_s}^{[2^*_s-4(1-\tau)]\frac{(q\delta_{q,s}-2)s}{s2^*_{s}+2t-3}}}
\\
&\hspace{6.45cm}\times
\|u_{\varepsilon}\|_2^{2-q+[2^*_s-4(1-\tau)]\frac{(q\delta_{q,s}-2)s}{s2^*_{s}+2t-3}}\|u_{\varepsilon}\|_{q}^{q}\bigg],
\end{split}
\] where   $0<\tau=\frac{2t+4s-3}{4s}<1,$ and
\[D(s,t)=\left(\frac{(3-2t)\lambda
 \Gamma_t}{2s}\right)^{\frac{(q\delta_{q,s}-2)s}{s2^*_{s}+2t-3}}.\]By a
direct computation, we have the following  clearer expressions
\be\label{e5.36}\begin{split}
[2^*_s-4(1-\tau)]\frac{(q\delta_{q,s}-2)s}{s2^*_{s}+2t-3}&=\left[2^*_s-4\left(1-\frac{2t+4s-3}{4s}\right)\right]\frac{(q\delta_{q,s}-2)s}{s2^*_{s}+2t-3}\\
&=\left[2^*_s-\frac{3-2t}{s}\right]\frac{(q\delta_{q,s}-2)s}{s2^*_{s}+2t-3}=q\delta_{q,s}-2;
\end{split}
\ee
 \be\label{e5.37}
 2-q+[2^*_s-4(1-\tau)]\frac{(q\delta_{q,s}-2)s}{s2^*_{s}+2t-3}=2-q+q\delta_{q,s}-2=(\delta_{q,s}-1)q;
\ee and \be\label{e5.38}\begin{split} &\frac{4\tau(q\delta_{q,s}-2)s
-(2^*_s-4(1-\tau))(q\delta_{q,s}-2)s}{s2^*_{s}+2t-3}+q-2\\
&=\frac{s(q\delta_{q,s}-2)(4-2^*_s)}{s2^*_{s}+2t-3}+q-2\\
&=\frac{1}{s2^*_{s}+2t-3}\left[(q-2)(s2^*_{s}+2t-3)-(2^*_s-4)s(q\delta_{q,s}-2)\right]\\
&=\frac{1}{s2^*_{s}+2t-3}\left[(q-2)(s2^*_{s}+2t-3)-(2^*_s-4)\left(\frac{3(q-2)}{2}-2s\right)\right]\\
&=\frac{(q-2)2t+2s(2^*_s-4)}{s2^*_{s}+2t-3}>0,
\end{split}\ee
where the last inequality holds true since
$q\in(2+\frac{4s}{3},2^*_s), 2s+2t>3$. Consequently, we have
\[\begin{split}(q-2)2t+2s(2^*_s-4)&>\frac{4s}{3}2t+2s(2^*_s-4)\\
&=2s\left(\frac{4t}{3}+2^*_s-4\right)=2s\frac{24s+12t-18-8st}{3(3-2s)}>0.
\end{split}
\]
Substituting formulas \eqref{e5.36}-\eqref{e5.38}     into
\eqref{e5.35},  we  infer to \be\label{e5.39}\begin{split}
 e^{(2^*_{s}-2)s\theta_{\varepsilon,\mu}}&\geq\frac{\|u_{\varepsilon}\|_2^{2^*_s-2}(\|u_{\varepsilon}\|^2)^{\frac{q\delta_{q,s}-2}{2^*_s-2}}}{a^{2^*_s-2}\|u_{\varepsilon}\|_{2^*_s}^{2^*_s}}\\
&\hspace{0.45cm}\times
\bigg[(\|u_{\varepsilon}\|^2)^{\frac{2^*_s-q\delta_{q,s}}{2^*_s-2}}-\frac{\mu\delta_{q,s}D(s,t)a^{\frac{(q-2)2t+2s(2^*_s-4)}{s2^*_{s}+2t-3}}}
{(\|u_{\varepsilon}\|^2)^{\frac{q\delta_{q,s}-2}{2^*_s-2}}\|u_{\varepsilon}\|_{2^*_s}^{q\delta_{q,s}-2}}\times
\frac{\|u_{\varepsilon}\|_{q}^{q}}{\|u_{\varepsilon}\|_2^{q(1-\delta_{q,s})}}\bigg].
\end{split}
\ee
 Notice that, by \eqref{e5.18}-\eqref{e5.21}, there
exist positive constants $C_4, C_5$ and $C_6$ depending on $s$ and
$q$ such that
\be\label{e5.40}(\|u_{\varepsilon}\|^2)^{\frac{q\delta_{q,s}-2}{2^*_s-2}}\geq
C_4,~~\frac{1}{C_5}\leq
\|u_{\varepsilon}\|_{2^*_s}^{q\delta_{q,s}-2}\leq C_5.\ee and
\be\label{e5.41}
\frac{\|u_{\varepsilon}\|^{q}_q}{\|u_{\varepsilon}\|_2^{q(1-\gamma_{q,s})}}
 =\left\{\begin{array}{ll}
C_6\varepsilon^{3-\frac{3-2s}{2}q-sq(1-\gamma_{q,s})}=C_6,&\mbox{if}~~0<s<\frac{3}{4};\vspace{0.2cm}\\
C_6|\ln\varepsilon|^{\frac{q(\gamma_{q,s}-1)}{2}},&\mbox{if}~~s=\frac{3}{4};\vspace{0.2cm}\\
C_6\varepsilon^{3-\frac{3-2s}{2}q-\frac{(3-2s)q(1-\gamma_{q,s})}{2}},&\mbox{if}~~\frac{3}{4}<s<1;
\end{array} \right.
\ee Next, we show that
\be\label{e5.42}e^{(2^*_{s}-2)s\theta_{\varepsilon,\mu}}\geq
C\frac{\|u_{\varepsilon}\|^{2^*_s-2}_{2}}{a^{2^*_s-2}},\ee  for some
positive constant $C>0$. To obtain the estimation \eqref{e5.42}, as
in Case 1, we have to consider the three cases:
 (i) $0<s<\frac{3}{4};$ (ii) $s=\frac{3}{4};$ and (iii)
$\frac{3}{4}<s<1.$

\par When $0<s<\frac{3}{4}, $    it holds that
\be\label{e5.43}3-\frac{3-2s}{2}q-sq(1-\delta_{q,s})=0,\ee and from
\eqref{e5.39}-\eqref{e5.41} we have
\[e^{(2^*_s-2)s\theta_{\varepsilon,\mu}}\geq
\frac{C\|u_{\varepsilon}\|^{2^*_s-2}_{2}}{a^{2^*_s-2}} \left[C_1
-\mu\delta_{q,s}D(s,t)a^{\frac{(q-2)2t+2s(2^*_s-4)}{s2^*_{s}+2t-3}}
\frac{C_6}{C_4C_5}\right],\] and we see that inequality
\eqref{e5.42} holds only when
$\mu\delta_{q,s}D(s,t)a^{\frac{(q-2)2t+2s(2^*_s-4)}{s2^*_{s}+2t-3}}<
C_1C_4C_5 C_6^{-1}$. Thus, we have to give a more precise estimate,
let us   come back to \eqref{e5.39} and observe that by well-known
interpolation inequality, we have \be\label{e5.44}
 \begin{split}
 \frac{\|u_{\varepsilon}\|_{q}^{q}}
{(\|u_{\varepsilon}\|^2)^{\frac{q\delta_{q,s}-2}{2^*_s-2}}\|u_{\varepsilon}\|_{2^*_s}^{q\delta_{q,s}-2}\|u_{\varepsilon}\|_2^{q(1-\delta_{q,s})}}
 &\leq \frac{(\|u_{\varepsilon}\|^{2^*_s}_{2^*_s})^{\frac{q-2}{2^*_s-2}}(\|u_{\varepsilon}\|^2_2)^{\frac{2^*_s-q}{2^*_s-2}}}
 {(\|u_{\varepsilon}\|^2)^{\frac{q\delta_{q,s}-2}{2^*_s-2}}(\|u_{\varepsilon}\|_2)^{q(1-\delta_{q,s})}
 (\|u_{\varepsilon}\|_{2^*_s}^{2^*_s})^{\frac{q\delta_{q,s}-2}{2^*_s-2}}}\\
 &
 =\frac{(\|u_{\varepsilon}\|^{2^*_s}_{2^*_s})^{\frac{q(1-\delta_{q,s})}{2^*_s-2}}}
 {(\|u_{\varepsilon}\|^2)^{\frac{q\delta_{q,s}-2}{2^*_s-2}}}.
 \end{split}
\ee Therefore, by \eqref{e5.39} and \eqref{e5.44} we derive as

\be\label{e5.45}\begin{split}
e^{(2^*_{s}-2)s\theta_{\varepsilon,\mu}}&\geq\frac{\|u_{\varepsilon}\|_2^{2^*_s-2}(\|u_{\varepsilon}\|^2)^{\frac{q\delta_{q,s}-2}{2^*_s-2}}}{a^{2^*_s-2}\|u_{\varepsilon}\|_{2^*_s}^{2^*_s}}
\\
&\hspace{0.45cm}\times\left[(\|u_{\varepsilon}\|^2)^{\frac{2^*_s-q\delta_{q,s}}{2^*_s-2}}-\mu\delta_{q,s}D(s,t)a^{\frac{(q-2)2t+2s(2^*_s-4)}{s2^*_{s}+2t-3}}
 \frac{(\|u_{\varepsilon}\|^{2^*_s}_{2^*_s})^{\frac{q(1-\delta_{q,s})}{2^*_s-2}}}
 {(\|u_{\varepsilon}\|^2)^{\frac{q\delta_{q,s}-2}{2^*_s-2}}}\right].
 \end{split}
\ee  We observe that  the right hand side of \eqref{e5.45} is
positive provided that
\[\begin{split}
\mu\delta_{q,s}D(s,t)a^{\frac{(q-2)2t+2s(2^*_s-4)}{s2^*_{s}+2t-3}}&<\frac{\|u_{\varepsilon}\|^2}{(\|u_{\varepsilon}\|_{2^*_s}^{2^*_s})^{\frac{q(1-\delta_{q,s})}{2^*_s-2}}}\\
&=\frac{S^{\frac{3}{2s}}+O(\varepsilon^{3-2s})}{\left(S^{\frac{3}{2s}}+O(\varepsilon^3)\right)^{\frac{q(1-\delta_{q,s})}{2^*_s-2}}}
=S^{\frac{3[(2^*_s-2)-q(1-\delta_{q,s})]}{2s(2^*_s-2)}}+O(\varepsilon^{3-2s}).
\end{split}
\]   Therefore, if $0<s<\frac{3}{4}$ and
\be\label{e5.46}\mu\delta_{q,s}D(s,t)a^{\frac{(q-2)2t+2s(2^*_s-4)}{s2^*_{s}+2t-3}}<S^{\frac{3[(2^*_s-2)-q(1-\delta_{q,s})]}{2s(2^*_s-2)}},\ee
we see that \eqref{e5.42} holds for some constant $C>0$.

\par For the cases: $s=\frac{3}{4}$, and   $\frac{3}{4}<s<1$, we still have the following estimations  as  in {\bf Case 1},
\[\frac{\|u_{\varepsilon}\|^{q}_q}{\|u_{\varepsilon}\|_2^{q(1-\gamma_{q,s})}}
\leq
C_3\varepsilon^{3-\frac{3-2s}{2}q-sq(1-\gamma_{q,s})}|\ln\varepsilon|^{\frac{q(\gamma_{q,s}-1)}{2}}=o_{\varepsilon}(1);\]
and
\[\frac{\|u_{\varepsilon}\|^{q}_q}{\|u_{\varepsilon}\|_2^{q(1-\gamma_{q,s})}}
\leq
C\varepsilon^{3-\frac{3-2s}{2}q-\frac{(3-2s)q(1-\gamma_{q,s})}{2}}=o_{\varepsilon}(1),\]
respectively. Moreover, we derive  that
\be\label{e5.47}e^{(2^*_{s}-2)s\theta_{\widetilde{v}_{\varepsilon}}}\geq
 C\frac{\|u_{\varepsilon}\|^{2^*_{s}-2}_{2}}{a^{2^*_{s}-2}}\left[C_1
-\mu\delta_{q,s}D(s,t)a^{\frac{(q-2)2t+2s(2^*_s-4)}{s2^*_{s}+2t-3}}\frac{C_5}{C_4}o_{\varepsilon}(1)\right]
\geq\frac{C\|u_{\varepsilon}\|^{2^*_{s}-2}_{2}}{a^{2^*_{s}-2}}.\ee
\par To sum up, condition \eqref{e2.5} can ensure that \eqref{e5.33}, \eqref{e5.46} occur, so as to guarantee \eqref{e5.47} hold.
\par In what follows we focus on an upper estimate of
$\max_{\theta\in\R}\Psi^{\mu}_{v_{\varepsilon}}(\theta)$. We split
the argument into two  steps.
\par {\bf Step 1.} We estimate for
$\max_{\theta\in\R}\Psi^0_{v_{\varepsilon}}(\theta)$, where,
\[ \Psi^0_{v_{\varepsilon}}(\theta):= \frac{e^{2s\theta}}{2}\|v_{\varepsilon}\|^2-\frac{e^{2^*_{s}s\theta}}{2^*_{s}}
\int_{\R^{N}}|v_{\varepsilon}|^{2^*_{s}}dx.\] It is easy to see that
for every $v_{\varepsilon}\in S_{r,a}$ the function
$\Psi^0_{v_{\varepsilon}}(\theta)$ has a unique critical point
$\theta_{\varepsilon,0}$, which is a strict maximum point and is
given by \be\label{e5.48}
e^{s\theta_{\varepsilon,0}}=\left(\frac{\|{v}_{\varepsilon}\|^2}{\int_{\R^{N}}
|{v}_{\varepsilon}|^{2^*_{s}}dx}\right)^{\frac{1}{2^*_{s}-2}}.\ee
Using the fact that
\[\sup_{\theta\geq
0}\left(\frac{\theta^{2}}{2}a-\frac{\theta^{2^{*}_{s}}}{2^{*}_{s}}
b\right)
=\frac{s}{3}\left(\frac{a}{b^{2/2^*_s}}\right)^{\frac{2^*_s}{2^*_s-2}},\]
for any fixed ~$a,b>0.$ We can deduce by \eqref{e5.18},
\eqref{e5.19}, that \be\label{e5.49}\begin{split}
\Psi^0_{{v}_{\varepsilon}}(\theta_{\varepsilon,0})&=\frac{s}{3}
\left(\frac{\|{v}_{\varepsilon}\|^2}{(\int_{\R^{N}}|{v}_{\varepsilon}|^{2^*_{s}}dx)^{\frac{2}{2^*_s}}}\right)^{\frac{2^*_s}{2^*_s-2}}=\frac{s}{3}
\left(\frac{\|{u}_{\varepsilon}\|^2}{(\int_{\R^{N}}|{u}_{\varepsilon}|^{2^*_{s}}dx)^{\frac{2}{2^*_s}}}\right)^{\frac{2^*_s}{2^*_s-2}}\\
&=\frac{s}{3}\left(\frac{S^{\frac{3}{2s}}+O(\varepsilon^{3-2s})}{(S^{\frac{3}{2s}}+O(\varepsilon^3))^{\frac{2}{2^*_s}}}\right)^{\frac{2^*_s}{2^*_s-2}}=\frac{s}{3}S^{\frac{3}{2s}}+O(\varepsilon^{3-2s}).
\end{split}
\ee
\par {\bf Step 2.} We next estimate for
$\max_{\theta\in\R}\Psi^{\mu}_{v_{\varepsilon}}(t)$.  Recall
 \eqref{e3.3}, \eqref{e5.24} and H\"{o}lder inequality,  we have
  \be\label{e5.50}\begin{split}
&e^{(2^*_{s}-2)s\theta_{\varepsilon,\mu}}\\
&\leq \frac{2\max\left\{\|v_{\varepsilon}\|^2,\lambda
 \frac{3-2t}{4s}e^{(3-2t-2s)\theta_{\varepsilon,\mu}}\int_{\R^3}\phi^t_{v_{\varepsilon}}v_{\varepsilon}^2dx\right\}}{\|v_{\varepsilon}\|^{2^*_s}_{2^*_s}}\\
 &\leq \frac{2\max\left\{\|v_{\varepsilon}\|^2,
 \lambda\frac{3-2t}{4s}e^{(3-2t-2s)\theta_{\varepsilon,\mu}}\Gamma_t\|v_{\varepsilon}\|^{4\tau}_2\|v_{\varepsilon}\|_{2^*_s}^{4(1-\tau)}\right\}}{\|v_{\varepsilon}\|^{2^*_s}_{2^*_s}}\\
  &= \frac{2\max\left\{a^2\|u_{\varepsilon}\|^2\|u_{\varepsilon}\|^{2^*_s-2}_2,
 \lambda\frac{3-2t}{4s}e^{(3-2t-2s)\theta_{\varepsilon,\mu}}\Gamma_ta^4\|u_{\varepsilon}\|^{4(1-\tau)}_{2^*_s}\|u_{\varepsilon}\|_2^{2^*_s-4(1-\tau)}\right\}}{a^{2^*_s}\|u_{\varepsilon}\|^{2^*_s}_{2^*_s}}.
\end{split}
\ee From the estimations \eqref{e5.18}-\eqref{e5.19} and
\eqref{e5.50}, we see that the number $\theta_{\varepsilon,\mu}$ can
not go to $+\infty$, and there exists some $\theta^*\in\R$ such that
\be\label{e5.51}\theta_{\varepsilon,\mu}\leq\theta^*,~~~\mbox{for
all}~~\varepsilon,\mu>0.\ee Hence, by virtue of \eqref{e5.50},
\eqref{e5.51} and \eqref{e3.3} we derive to
 \be\label{e5.52}\begin{split}
 &\max_{\theta\in\R}\Psi^{\mu}_{v_{\varepsilon}}(\theta)\\
 &=\Psi^{\mu}_{v_{\varepsilon}}(\theta_{\varepsilon,\mu})=\Psi^{0}_{v_{\varepsilon}}(\theta_{\varepsilon,\mu})
+\frac{e^{(3-2t)\theta_{\varepsilon,\mu}}}{4}\lambda\int_{\R^3}\phi^t_{v_{\varepsilon}}v_{\varepsilon}^2dx-\mu
\frac{e^{q\gamma_{q,s}s\theta_{\varepsilon,\mu}}}{q}\int_{\R^N}|v_{\varepsilon}|^qdx\\
&\leq\sup_{\theta\in\R}\Psi^{0}_{v_{\varepsilon}}(\theta)+\frac{e^{(3-2t)\theta_{\varepsilon,\mu}}}{4}\lambda\int_{\R^3}\phi^t_{v_{\varepsilon}}v_{\varepsilon}^2dx-\mu
\frac{e^{q\gamma_{q,s}s\theta_{\varepsilon,\mu}}}{q}\int_{\R^N}|v_{\varepsilon}|^qdx\\
&\leq\Psi^{0}_{v_{\varepsilon}}(\theta_{v_{\varepsilon,0}})+C\lambda\left(\int_{\R^3}|v_{\varepsilon}|^{\frac{12}{3+2t}}dx\right)^{\frac{3+2t}{3}}-\frac{C\mu
a^{q(1-\gamma_{q,s})}}{q}\frac{\int_{\R^N}|u_{\varepsilon}|^qdx}{\|u_{\varepsilon}\|_2^{q(1-\gamma_{q,s})}}\\
&\leq\frac{s}{3}S^{\frac{3}{2s}}+O(\varepsilon^{3-2s})+C\frac{\lambda
a^4}{\|u_{\varepsilon}\|_2^4}\left(\int_{\R^3}|u_{\varepsilon}|^{\frac{12}{3+2t}}dx\right)^{\frac{3+2t}{3}}-\frac{C\mu
a^{q(1-\gamma_{q,s})}}{q}\frac{\int_{\R^N}|u_{\varepsilon}|^qdx}{\|u_{\varepsilon}\|_2^{q(1-\gamma_{q,s})}}\\
&\leq\frac{s}{3}S^{\frac{3}{2s}}+C_1\varepsilon^{3-2s}+C_2\lambda\frac{\left(\int_{\R^3}|u_{\varepsilon}|^{\frac{12}{3+2t}}dx\right)^{\frac{3+2t}{3}}}{\|u_{\varepsilon}\|_2^4}
-C_3\frac{\int_{\R^N}|u_{\varepsilon}|^qdx}{\|u_{\varepsilon}\|_2^{q(1-\gamma_{q,s})}}.\end{split}
\ee Next, we separate three cases:
\par  {\em Case 1:} $0<s<\frac{3}{4}.$  In this case, owing to
$2t+8s<9$, we get $p=\frac{12}{3+2t}>\frac{3}{3-2s}$, it following
from
 \eqref{e5.20}-\eqref{e5.21} and \eqref{e5.28}  that,
 \be\label{e5.53}\begin{split}
&\frac{s}{3}S^{\frac{3}{2s}}+C_1\varepsilon^{3-2s}+C_2\lambda\frac{\left(\int_{\R^3}|u_{\varepsilon}|^{\frac{12}{3+2t}}dx\right)^{\frac{3+2t}{3}}}{\|u_{\varepsilon}\|_2^4}
-C_3\frac{\int_{\R^3}|u_{\varepsilon}|^qdx}{\|u_{\varepsilon}\|_2^{q(1-\gamma_{q,s})}}\\
&=\frac{s}{3}S^{\frac{3}{2s}}+C_1\varepsilon^{3-2s}+C_2\lambda\frac{\varepsilon^{2t+4s-3}}{\varepsilon^{4s}}
-C_3 \\
&<\frac{s}{3}S^{\frac{3}{2s}},
\end{split}
\ee if we choose $\lambda=\varepsilon^{s}$.

\par  {\em Case 2:} $s=\frac{3}{4}.$  In this case,  we still have
$2t+8s=2t+6<9$, and also, $p=\frac{12}{3+2t}>\frac{3}{3-2s}$.
Moreover, $2+\frac{q(\gamma_{q,s}-1)}{2}=\frac{q(3-2s)}{4s}>0$,
hence
 \[\varepsilon^{2t+2s-3}\rightarrow0,~~\varepsilon^{3-2s}(\log\varepsilon)^2\rightarrow0,~~\mbox{and}~~|\ln\varepsilon|^{2+\frac{q(\gamma_{q,s}-1)}{2}}\rightarrow+\infty,\]
 when $\varepsilon\rightarrow 0^+.$ Consequently, if we choose  $\lambda=\varepsilon^{2s}$,  then  we
 have \be\label{e5.54}\begin{split}
&\frac{s}{3}S^{\frac{3}{2s}}+C_1\varepsilon^{3-2s}+C_2\lambda\frac{\left(\int_{\R^3}|u_{\varepsilon}|^{\frac{12}{3+2t}}dx\right)^{\frac{3+2t}{3}}}{\|u_{\varepsilon}\|_2^4}
-C_3\frac{\int_{\R^3}|u_{\varepsilon}|^qdx}{\|u_{\varepsilon}\|_2^{q(1-\gamma_{q,s})}}\\
&=\frac{s}{3}S^{\frac{3}{2s}}+C_1\varepsilon^{3-2s}+C_2\lambda\frac{\varepsilon^{2t+4s-3}}{
 \varepsilon^{4s}|\log\varepsilon|^2}
-C_3|\ln\varepsilon|^{\frac{q(\gamma_{q,s}-1)}{2}} \\
&=\frac{s}{3}S^{\frac{3}{2s}}+\frac{1}{(\log\varepsilon)^2}\left[C_1\varepsilon^{3-2s}(\log\varepsilon)^2+C_2\varepsilon^{2t+2s-3}
-C_3|\ln\varepsilon|^{2+\frac{q(\gamma_{q,s}-1)}{2}}\right]\\
&<\frac{s}{3}S^{\frac{3}{2s}},
\end{split}
\ee when $\varepsilon>0$ small enough.

\par  {\em Case 3:} $\frac{3}{4}<s<1.$ In this case,   using
the fact that $2t+2s>3, q>2+\frac{4s}{3}$, we can obtain  the
inequality  by a direct computation,
\[3-\frac{3-2s}{2}q-\frac{(3-2s)q(1-\gamma_{q,s})}{2}<3-2s.\]
Thus,   from \eqref{e5.20}-\eqref{e5.21} and \eqref{e5.28}, letting
$\lambda=\varepsilon^{6-4s}$  we derive that
 \be\label{e5.55}\begin{split}
&\frac{s}{3}S^{\frac{3}{2s}}+C_1\varepsilon^{3-2s}+C_2\lambda\frac{\left(\int_{\R^3}|u_{\varepsilon}|^{\frac{12}{3+2t}}dx\right)^{\frac{3+2t}{3}}}{\|u_{\varepsilon}\|_2^4}
-C_3\frac{\int_{\R^3}|u_{\varepsilon}|^qdx}{\|u_{\varepsilon}\|_2^{q(1-\gamma_{q,s})}}\\
&=\frac{s}{3}S^{\frac{3}{2s}}+C_1\varepsilon^{3-2s}+C_2\left\{\begin{split}
\lambda\frac{\varepsilon^{2t+4s-3}}{\varepsilon^{6-4s}},~~&\mbox{if}~~\frac{12}{3+2t}>\frac{3}{3-2s},\\
\lambda\frac{\varepsilon^{2t+4s-3}|\ln\varepsilon|^{\frac{3+2t}{3}}}{\varepsilon^{6-4s}},~~&\mbox{if}~~\frac{12}{3+2t}=\frac{3}{3-2s},\\
\lambda\frac{\varepsilon^{2(3-2s)}}{\varepsilon^{6-4s}},~~&\mbox{if}~~\frac{12}{3+2t}<\frac{3}{3-2s}\\
\end{split}
\right.\\
&\hspace{0.45cm}-C_3\varepsilon^{3-\frac{3-2s}{2}q-\frac{(3-2s)q(1-\gamma_{q,s})}{2}-(3-2s)}\\
&<\frac{s}{3}S^{\frac{3}{2s}}.
\end{split}
\ee

\par Since $ {v}_{\varepsilon} \in S_{r,a}, $ from Lemma \ref{Lemma 5.1} we can take  $\theta_1<
0$ and $\theta_2> 0$ such that $\theta_1\star {v}_{\varepsilon} \in
\mathcal {A}_a$ and  $I_{\mu}(\theta_2\star {v}_{\varepsilon}) < 0,$
respectively. Then we can define a path \[\gamma_{{v}_{\varepsilon}
} : t\in [0, 1] \mapsto ((1 - t)\theta_1 + t\theta_2)\star
{v}_{\varepsilon} \in \Gamma_a.\] To sum up, by the estimations
\eqref{e5.52}-\eqref{e5.55}, we can derive that
 \be\label{e5.56}
 c_{r,\mu}(a) \leq \max_{t\in [0,1]}I_{\mu}(\gamma_{{v}_{\varepsilon}
 }(t))\leq \max_{\theta\in\R}\Psi^{\mu}_{{v}_{\varepsilon}}(\theta)<\frac{s}{3}S^{\frac{3}{2s}},\ee
for $\varepsilon>0$ small   enough, which is the desired result. \ep

\bl\label{Lemma 5.8}   Let $\{u_n\}$ be the $(PS)$ sequence in
$S_{r,a}$ at level $c_{\mu}(a),$ with $c_{\mu}(a)
<\frac{s}{3}S^{\frac{3}{2s}}$, assume that $u_n\rightharpoonup u,$
then, $u \not\equiv 0.$ \el
 \bp
Arguing by contradiction, we suppose that $u \equiv 0.$ Noticing
that $\{u_n\}$  is bounded in $H^s_{rad}(\R^3),$ going to a
subsequence, we may assume that
$\|(-\Delta)^{\frac{s}{2}}u_n\|_2^2\rightarrow \ell\geq0.$ By Lemma
\ref{Lemma 3.6}, $u_n\rightarrow 0$ in $L^p(\R^3), \forall p\in
(2,2^*_s)$. From Proposition \ref{Prop 5.5} and Lemmas \ref{Lemma
3.3},\ref{Lemma 3.6}, we have $P_{\mu}(u_n)\rightarrow0$  such that,
 \[\begin{split}
  \int_{\R^3}|u_n|^{2^*_{s}}dx&= \|(-\Delta)^{\frac{s}{2}}u_n\|^2_2+\frac{3-2t}{4s}\lambda\int_{\R^3}\phi_{u_n}^tu_n^2dx-\mu
\delta_{q,s}\int_{\R^3}|u_n|^qdx\\
&=\|(-\Delta)^{\frac{s}{2}}u_n\|^2_2+o_n(1)\\
&=\ell+o_n(1),
\end{split}
\] as $n\rightarrow\infty$. Then, using Sobolev's
inequality, one has $\ell \geq S\ell^{\frac{2}{2^*_s}},$ and so,
either $\ell\geq S^{\frac{3}{2s}}$ or $\ell = 0.$ In the case
$\ell\geq S^{\frac{3}{2s}}$, from $I_{\mu} (u_n)\rightarrow
c_{\mu}(a),
 P_{\mu}(u_n)\rightarrow 0$, we know
\[\begin{split}
&c_{\mu}(a) + o_n(1)\\
 &= I_{\mu} (u_n)=I_{\mu}(u_n)-\frac{1}{s2^*_s}P_{\mu}(u_n)\\
 &=\frac{s}{3}\|(-\Delta)^{\frac{s}{2}}u_n\|_2^2+\lambda\frac{s2^*_s+2t-3}{4s2^*_s}\int_{\R^3}\phi_{u_n}^t|u_n|^2dx-\mu\frac{2^*_s-q}{q2^*_s}
  \int_{\R^3}|u_n|^qdx+o_n(1)\\
  &=\frac{s}{3}\ell+o_n(1)
\end{split}\]
which means $c_{\mu}(a)= \frac{s}{3}\ell$, that is $c_{\mu}(a)\geq
\frac{s}{3} S^{\frac{3}{2s}}$, which contradicts the assumption
$c_{\mu}(a)< \frac{s}{3} S^{\frac{3}{2s}}$. In the case $\ell = 0$,
one has \[
 \|(-\Delta)^{\frac{s}{2}}u_n\|^2_2\rightarrow 0,~~\int_{\R^3}|u_n|^{2^*_{s}}dx\rightarrow 0,\] and combining with \[\int_{\R^3}\phi_{u_n}^tu_n^2dx\rightarrow 0,~~
 \int_{\R^3}|u_n|^qdx\rightarrow0,\]  we have, $I_{\mu} (u_n)\rightarrow 0$, which is absurd since $c_{\mu}(a)>0.$ Therefore, $u\not\equiv0$.
\ep

\bl\label{Lemma 5.9}  Let $\{u_n\}$ be the $(PS)$ sequence in
$S_{r,a}$ at level $c_{\mu}(a)$, with  $c_{\mu}(a)< \frac{s}{3}
S^{\frac{3}{2s}}$, assume that $P_{\mu}(u_n)\rightarrow 0$ when
$n\rightarrow\infty$, and $\lambda<\lambda^*_1$ small. Then one of
the following alternatives holds:
\par (i) either going to a subsequence $u_n\rightharpoonup u$ weakly
in
 $H^s_{rad}(\R^3)$, but not strongly, where $u \not\equiv0$ is a solution to
 \be\label{e5.57}(-\Delta)^su +\lambda\phi^t_u u= \alpha
u+\mu|u|^{q-2}u+|u|^{2^*_s-2}u,~~~\mbox{in}~~\R^3,\ee where
$\alpha_n\rightarrow \alpha< 0,$ and
\[I_{\mu}(u)<c_{\mu}(a)-\frac{s}{3}S^{\frac{3}{2s}};\]
\par (ii) or passing to a subsequence $u_n\rightarrow u$ strongly in $H^s_{rad}(\R^3), I_{\mu}(u) =c_{\mu}(a)$ and $u$ is
a solution of \eqref{e1.5}-\eqref{e1.6} for some $\alpha < 0.$
 \el
\bp  By  Lemma \ref{Lemma 5.6},  we have that $\{u_n\}\subset
S_{r,a}$ is a bounded $(PS)$ sequence for $I_{\mu}$ in
$H^s_{rad}(\R^3)$, and so $u_n\rightharpoonup u$ in
$H^s_{rad}(\R^3)$ for some $u$. By the Lagrange multiplier
principle, there exists $\{\alpha_n\}\subset\R$ satisfying
\be\label{e5.58}
 \begin{split}
 & \int_{\R^3}(-\Delta)^{\frac{s}{2}}u_n(-\Delta)^{\frac{s}{2}}\varphi dx-\alpha_n\int_{\R^3}u_n\varphi dx+\lambda\int_{\R^3}\phi_{u_n}^tu\varphi dx-\mu\int_{\R^3}|u_n|^{q-2}u_n\varphi dx
  \\
&\hspace{0.45cm}- \int_{\R^3}|u_n|^{2^*_{s}-2}u_n\varphi
dx=o_n(1)\|\varphi\|,\end{split} \ee for any $\varphi\in
H^s_{rad}(\R^3).$ Moreover,  one has
$\lim_{n\rightarrow\infty}\alpha_n=\alpha < 0.$ Letting
$n\rightarrow\infty$ in \eqref{e5.58}, we have \[
\int_{\R^3}(-\Delta)^{\frac{s}{2}}u(-\Delta)^{\frac{s}{2}}\varphi
dx+\lambda\int_{\R^3}\phi_{u}^tu\varphi
dx-\mu\int_{\R^3}|u|^{q-2}u\varphi
dx-\int_{\R^3}|u|^{2^*_{s}-2}u\varphi dx-\alpha\int_{\R^3}u\varphi
dx=0,\] which implies that $u$ solves the equation
\be\label{e5.59}(-\Delta)^su +\lambda\phi^t_u u= \alpha
u+\mu|u|^{q-2}u+|u|^{2^*_s-2}u,~~~\mbox{in}~~\R^3,\ee and we have
the Pohoz\u{a}ev identity $P_{\mu}(u) = 0.$
\par Let $v_n = u_n-u,$ then
$v_n\rightharpoonup0$ in $H^s_{rad}(\R^3).$ According to Brezis-Lieb
lemma \cite{Willem} and Lemma \ref{Lemma 3.3}, one has
\be\label{e5.60}\|(-\Delta)^{\frac{s}{2}}u_n\|^2_2=\|(-\Delta)^{\frac{s}{2}}u\|^2_2+\|(-\Delta)^{\frac{s}{2}}v_n\|^2_2+o_n(1),~~~\|u_n\|^{2^*_s}_{2^*_s}=\|u\|^{2^*_s}_{2^*_s}+\|v_n\|^{2^*_s}_{2^*_s}+o_n(1),\ee
and
\be\label{e5.61}\int_{\R^3}\phi_{u_n}^tu_n^2dx=\int_{\R^3}\phi_{u}u^2dx+o_n(1),~~\|u_n\|^q_q=\|u\|^q_q+\|v_n\|^q_q+o_n(1).\ee
Then, from $P_{\mu}(u_n)\rightarrow   0, u_n \rightarrow u$ in
$L^p(\R^3)$, one can derive that
\[\begin{split}
&\|(-\Delta)^{\frac{s}{2}}u\|_2^2+\|(-\Delta)^{\frac{s}{2}}v_n\|_2^2+\frac{3-2t}{4s}\lambda\int_{\R^3}\phi_u^tu^2dx\\
&=\mu \delta_{q,s}\int_{\R^3}|u|^qdx
 +\int_{\R^3}|u|^{2^*_{s}}dx+\int_{\R^3}|v_n|^{2^*_{s}}dx+o_n(1).
 \end{split}\]
By $P_{\mu}(u) = 0,$ we have
\be\label{e5.62}\|(-\Delta)^{\frac{s}{2}}v_n\|^2_2=\int_{\R^3}|v_n|^{2^*_{s}}dx+o_n(1).\ee
Passing to a subsequence, we may assume that
\be\label{e5.63}\lim_{n\rightarrow\infty}\|(-\Delta)^{\frac{s}{2}}v_n\|^2_2=\lim_{n\rightarrow\infty}\int_{\R^3}|v_n|^{2^*_{s}}dx=\ell\geq0.\ee
Then, it follows from Sobolev's inequality that $\ell \geq
S\ell^{\frac{2}{2^*_s}},$ and so, either $\ell\geq S^{\frac{3}{2s}}$
or $\ell = 0.$ In the case $\ell\geq S^{\frac{3}{2s}}$, from
$I_{\mu} (u_n)\rightarrow c_{\mu}(a),
 P_{\mu}(u_n)\rightarrow 0$, we know
\be\label{e5.64}\begin{split}
c_{\mu}(a)&=\lim_{n\rightarrow\infty}I_{\mu}(u_n)=\lim_{n\rightarrow\infty}\left\{I_{\mu}(u)+\frac{1}{2}\|v_n\|^2-\frac{1}{2^*_s}\int_{\R^3}|v_n|^{2^*_s}dx+o_n(1)\right\}\\
 &=I_{\mu}(u)+\frac{s}{3}\ell\geq I_{\mu}(u)+\frac{s}{3} S^{\frac{3}{2s}}
 \end{split}\ee
 which means that item (i) holds.
\par If $\ell = 0,$ then $\|u_n -u\| = \|v_n \|\rightarrow 0,$ one has $u_n\rightarrow u$ in
 $D^{s,2}(\R^3),$ and so $u_n\rightarrow u$ in $L^{2^*_s}(\R^3).$ To prove that $u_n\rightarrow u$ in
 $H^s_{rad}(\R^3),$ it remains only to prove that $u_n\rightarrow u$ in $ L^2(\R^3).$ Fix $\psi = u_n - u$ as a
test function in \eqref{e5.58}, and $u_n- u$ as a test function of
 \eqref{e5.59}, we deduce that
\be\label{e5.65}
\begin{split}
 & \int_{\R^3}|(-\Delta)^{\frac{s}{2}}(u_n-u)|^2 dx-\int_{\R^3}(\alpha_nu_n-\alpha u)(u_n-u) dx+\lambda\int_{\R^3}(\phi_{u_n}^tu_n-\phi_{u}^tu)(u_n-u) dx\\
 &\hspace{0.45cm}=\mu\int_{\R^3}(|u_n|^{q-2}u_n-|u|^{q-2}u)(u_n-u) dx
 +\int_{\R^3}(|u_n|^{2^*_{s}-2}u_n-|u|^{2^*_{s}-2}u)(u_n-u)
 dx+o_n(1).
  \end{split}
  \ee
Passing the limit in \eqref{e5.65} as $ n\rightarrow\infty$, we have
\[0=\lim_{n\rightarrow\infty}\int_{\R^3}(\alpha_nu_n-\alpha u)(u_n-u) dx=\lim_{n\rightarrow\infty}\alpha\int_{\R^3}(u_n- u)^2
dx,\]and then $u_n\rightarrow u$ in $L^2(\R^3)$. Therefore,   item
(ii) holds. \ep

  \par Now, we are ready to complete the proof of Theorem \ref{Theorem 2.2}.
\vskip0.1in
\par\noindent {\em Proof of Theorem  \ref{Theorem 2.2}.} Let $\lambda<\Lambda^*:=\min\{\lambda^*_1,\lambda^*_2\}.$
By virtue of Lemmas \ref{Lemma 5.1}-\ref{Lemma 5.2},\ref{Lemma
5.6}-\ref{Lemma 5.7},  Propositions \ref{Prop 5.3}-\ref{Prop 5.5},
there exists a bounded
 $(PS)_{c_{\mu}(a)}$-sequence $\{u_n\}\subset S_{r,a}$, with
 $c_{\mu}(a)<\frac{s}{3}S^{\frac{3}{2s}}$, and $u\in
 H^s_{rad}(\R^3)$ such that one of the
 alternatives  of Lemma \ref{Lemma 5.9} holds.  We assert that (i) of Lemma \ref{Lemma 5.9} can not occur.
Indeed, suppose by contradiction that,  item (i) holds, then $u$ is
a nontrivial solution of \eqref{e5.57}, and by Lemma \ref{Lemma 5.9}
and Lemma \ref{Lemma 5.7}, we have
\[I_{\mu}(u)<c_{\mu}(a)-\frac{s}{3}S^{\frac{3}{2s}}<0.\]
On the other hand, we have
\[\begin{split}
    I_{\mu}(u)&=I_{\mu}(u)-\frac{1}{2s} P_{\mu}(u)\\
    &=\frac{2s+2t-3}{8}\lambda\int_{\R^3}\phi^t_uu^2dx+\frac{q\delta_{q,s}-2}{2q}\mu\int_{\R^3}|u|^qdx+\frac{s}{3}\int_{\R^3}|u|^{2^*_s}dx\\
    &\geq0,\end{split}
    \]
which leads  to a contradiction. Therefore, $u_n\rightarrow u$
strongly in $H^s_{rad}(\R^3)$ with $I_{\mu}(u) =c_{\mu}(a)$, and $u$
is a solution of \eqref{e1.5}-\eqref{e1.6} for some $\alpha < 0.$
Moreover, $u(x)>0$ in $\R^3$. In fact, we note that all the
calculations above can be repeated word by word, replacing $I_{\mu}$
with the functional \be\label{e5.66}
I_{\mu}^{+}(u)=\frac{1}{2}\int_{\R^{3}}|(-\Delta)^{\frac{s}{2}}u|^{2}dx+\frac{\lambda
}{4}\int_{\R^{3}}\phi_u^tu^{2}dx-\frac{\mu}{q}\int_{\R^{3}}
    |u^{+}|^{q}dx-\frac{1}{2^{*}_{s}}\int_{\R^{3}}|u^{+}|^{2^{*}_{s}}dx.
\ee Then $u$ is the critical point of $I_{\mu}^+$ restricted on the
set $S_{r,a}$, it solves the equation
  \be\label{e5.67}(-\Delta)^su +\lambda\phi^t_u u= \alpha
u+\mu|u^+|^{q-2}u+|u^+|^{2^*_s-2}u.~~~\mbox{in}~~\R^3,\ee
  Using $u^{-}= \min\{u,0\}$ as a test function in
\eqref{e5.67}, in view of  $(a-b)(a^{-}-b^{-})\geq|a^{-}-b^{-}|^2,
\forall  a,b\in\R,$ we conclude that
\[\begin{split}
   & \|(-\Delta ^{\frac{s}{2}})u^{-}\|_2^2=\iint_{\R^{6}}\frac{|u^-(x)-u(y)|^2}{|x-y|^{3+2s}}dxdy \\
   &\leq  \|(-\Delta
    ^{\frac{s}{2}})u^{-}\|_2^2+\lambda\int_{\R^3}\phi^t_u |u^-|^2dx-\alpha\int_{\R^3}|u^-|^2dx\\
    &\leq  \iint_{\R^{6}}\frac{(u(x)-u(y))((u^{-}(x)-u^{-}(y))}{|x-y|^{3+2s}}dxdy +\lambda\int_{\R^3}\phi^t_u |u^-|^2dx-\alpha\int_{\R^3}|u^-|^2dx\\
    &=0.
\end{split}\]
Thus, $u^{-} =0$ and $u \geq 0, \forall x\in\R^3$, is a solution of
\eqref{e5.67}. By the regularity result  \cite{YYZ} we know that $u
\in L^{\infty}(\R^3)\cap C^{0,\alpha}(\R^3)$ for some
$\alpha\in(0,1)$. Suppose  $u(x_0) = 0$ for some $x_0\in\R^3$, then
$(-\Delta)^s u (x_0) = 0$ and by the definition of $(-\Delta)^s$, we
have \cite{NPV}:
\[(-\Delta)^s  u(x_0) =-\frac{C_{s}}{2}\int_{\R^3}\frac{u(x_0+y)+u(x_0-y)-2u(x_0)}{|y|^{3+2s}}dy.\]Hence,
$\int_{\R^3}\frac{u(x_0+y)+u(x_0-y)}{|y|^{3+2s}}dy=0,$ which implies
  $u\equiv0$, a contradiction. Thus, $u(x)>0, \forall x\in\R^3.$ \qed

\vskip0.2in

\section{Proof of Theorem \ref{Theorem 2.3}}

In this section, we deal with the $L^2$-supercritical case
$2+\frac{4s}{3}<q<2^*_s$, when   parameter $\mu>0$ large. In view of
$\frac{3(q-2)}{2s}>2$,  the truncated functional $I_{\mu,\tau}$
defined in Section 4 is still unbounded from below on $S_{r,a}$, and
 the truncation technique can not be applied to study problem
\eqref{e1.5}-\eqref{e1.6}.

To overcome this difficulty,  as in Section 5 we introduce the
transformation (e.g. \cite{Soave1}): \be\label{e6.1}
  (\theta\star u)(x):=e^{\frac{3\theta}{2}}u(e^{\theta}x),~~~~
x\in\RN,~~\theta\in\R,\ee  and the auxiliary functional
\be\label{e6.2}\begin{split}
 I(u,\theta)=I_{\mu} ((\theta\star u))=&\frac{e^{2s\theta}}{2}\|u\|^2+\frac{\lambda e^{(3-2t)\theta}}{4}\int_{\R^3}\phi^t_uu^2dx
 -\frac{\mu}{q}e^{q\delta_{q,s}s\theta}\int_{\R^3}|u|^qdx\\
&-\frac{1}{2^*_{s}}e^{\frac{3(2^*_{s}-2)}{2}\theta}\int_{\R^3}|u|^{2^*_{s}}dx.
\end{split}\ee
From Lemmas \ref{Lemma 5.1}, \ref{Lemma 5.2}, we have the
 the mountain pass level value
$c_{\mu}(a)$ by
\[
c_\mu(a):=\inf_{\gamma\in
\Gamma}\max_{t\in[0,1]}I_{\mu}(\gamma(t))>0,
\]
where
\[{\Gamma}_a=\{\gamma\in C([0,1],
S_{r,a}):~\gamma(0)\in A_a, \gamma(1)\in I_{\mu}^0\}.\]

\par In what follows, we set $g(t)=\mu|t|^{q-2}t+|u|^{2^*_s-2}u$, for any
$t\in \R$. From Propositions \ref{Prop 5.4},\ref{Prop 5.5}, we know
that there exist a  $(PS)_{c_{\mu}(a)}$-sequence $\{u_n\}\subset
S_{r,a}$ satisfying
\[
I_{\mu}(u_n)\rightarrow c_\mu(a), ~~
\|I_{\mu}'|_{S_{r,a}}(u_n)\|\rightarrow0 ~ \mbox{ and}~
P_{\mu}(u_n)\rightarrow0,~~\mbox{as}~~ n\rightarrow\infty,
\]
where
\[\begin{split}
P_{\mu}(u_n)=&s\int_{\R^3}|(-\Delta)^{\frac{s}{2}}u_n|^2dx+\frac{3-2t}{4}\lambda\int_{\R^3}\phi_u^tu^2dx+3\int_{\R^3}G(u_n)dx-\frac{3}{2}\int_{\R^3}g(u_n)u_ndx.
\end{split}\]
 Similar to the Section 5, setting the functional $\Psi(v): H_{rad}^s(\R^3)\rightarrow\R$ given by
\[
\Psi(v)=\frac{1}{2}\int_{\R^3}|v|^2dx,
\]
it follows that $S_{r,a}=\Psi^{-1}(\{\frac{a^2}{2}\})$, and  by
Proposition 5.12 in \cite{Willem}, there exists $\alpha_n\in \R$
such that
\[
\|I_{\mu}'(u_n)-\alpha_n\Psi'(u_n)\|\rightarrow 0,~~\mbox{as}~~
n\rightarrow\infty.
\]
 That is,  we have
\begin{equation}\label{e6.3}
(-\Delta)^su_n+\lambda\phi_{u_n}^tu_n-g(u_n)=\alpha_n u_n+o_n(1)
~~\mbox{in}~H^{-s}_{rad}(\R^3).
\end{equation}
 Therefore, for any $\varphi \in H^s_{rad}(\R^3)$ , one has
\begin{equation}\label{e6.4}
 \int_{\R^3}(-\Delta)^\frac{s}{2}u_n(-\Delta)^\frac{s}{2}\varphi
dx+\lambda \int_{\R^3}\phi_{u_n}^t u_n\varphi dx- \int_{\R^3}g(u_n)
\varphi dx =\alpha_n \int_{\R^3}u_n \varphi dx+o_n(1).
 \end{equation}
 \par In the sequel, we study the asymptotical behavior of the mountain pass level
value $c_{\mu}(a)$ as $\mu\rightarrow+\infty,$ and the properties of
the $(PS)_{c_{\mu}(a)}$-sequence $\{u_n\}\subset S_{r,a}$ as
$n\rightarrow+\infty.$

\bl\label{Lemma 6.1} The limit
$\lim_{\mu\rightarrow+\infty}c_{\mu}(a) = 0$ holds. \el \bp Recall
Lemmas \ref{Lemma 5.1}, \ref{Lemma 5.2}, we see that for fixed
$u_0\in S_{r,a}$, there exists two constants $\theta_1,\theta_2$
satisfying $\theta_1<0<\theta_2$ such that $u_1:=\theta_1\star
u_0\in A$ and $I_{\mu}(u_2)<0$.  Then we can define a path
\[\eta_{0}: \tau\in[0,1]\rightarrow
((1-\tau)\theta_1+\tau\theta_2)\star u_0\in \Gamma_a.\] Thus, we
have
\[\begin{split}
c_{\mu}(a)&\leq \max_{t\in[0,1]}I_{\mu}(\eta_0(t))\\
&\leq\max_{r\geq0}\left\{\frac{r^{2s}}{2}\|u_0\|^2+\frac{r^{3-2t}}{4}\lambda\int_{\R^3}\phi^t_{u_0}u_0^2dx-\frac{\mu}{q}r^{\frac{3q-6}{2}}\int_{\R^3}|u_0|^qdx\right\}\\
&:=\max_{r\geq0}h(r).
\end{split}
\]Note that $\frac{3q-6}{2}>2s>3-2t,$ we have that $\lim_{r\rightarrow0^+}h(r)=0^+, \lim_{r\rightarrow +\infty}h(r)=-\infty,$ and so, there exists a
unique maximum point  $r_0>0$ such that
$\max_{r\geq0}h(r)=h(r_0)>0$. Hence,  we  distinguish two  cases:
$r_0 \geq 1$ and $0\leq r_0 < 1.$
\par  If $r_0 \geq 1,$ we have by $2s+2t>3$, that
\[\begin{split}
\max_{t\in[0,1]}I_{\mu}(\eta_0(t))&\leq h(r_0)\\
&\leq \frac{r_0^{2s}}{2}\|u_0\|^2+\frac{r_0^{2s}}{4}\lambda\int_{\R^3}\phi^t_{u_0}u_0^2dx-\frac{\mu}{q}r_0^{\frac{3q-6}{2}}\int_{\R^3}|u_0|^qdx\\
&\leq\max_{r\geq0}\left\{2\max\left\{\frac{1}{2}\|u_0\|^2,\frac{\lambda}{4}\int_{\R^3}\phi^t_{u_0}u_0^2dx\right\}r^{2s}-\frac{\mu}{q}r^{\frac{3q-6}{2}}\int_{\R^3}|u_0|^qdx\right\}\\
&=2a(r_{max})^{2s}-\frac{\mu b}{q}(r_{max})^{\frac{3q-6}{2}}\\
&=\frac{2a(3q-6-4s)}{3q-6}\left[\frac{8qsa}{\mu
b(3q-6)}\right]^{\frac{4s}{3q-6-4s}},\end{split}
\]where
\[r_{max}=\left[\frac{8qsa}{\mu
b(3q-6)}\right]^{\frac{4s}{3q-6-4s}},~a=\max\left\{\frac{1}{2}\|u_0\|^2,\frac{\lambda}{4}\int_{\R^3}\phi^t_{u_0}u_0^2dx\right\},~b=\int_{\R^3}|u_0|^qdx.\]
Therefore, for $2+\frac{4s}{3} <q < 2^*_s$, we have a positive
constant $\widetilde{C}$ independent of $\mu$ such that
\[\gamma_{\mu}(a)\leq
\widetilde{C}
\mu^{-\frac{4s}{3q-6-4s}}\rightarrow0,~~~~\mbox{as}~~\mu\rightarrow+\infty.\]
  \par If $0 \leq r_0 < 1,$ we infer to
\[\begin{split}
\max_{t\in[0,1]}I_{\mu}(\eta_0(t))&\leq
\frac{r_0^{2s}}{2}\|u_0\|^2+\frac{r_0^{3-2t}}{4}\int_{\R^3}\phi^t_{u_0}u_0^2dx-\frac{\mu}{q}r_0^{\frac{3q-6}{2}}\int_{\R^3}|u_0|^qdx\\
&\leq\max_{r\geq0}\left\{2\max\left\{\frac{1}{2}\|u_0\|^2,\frac{1}{4}\int_{\R^3}\phi^t_{u_0}u_0^2dx\right\}r^{3-2t}-\frac{\mu}{q}r^{\frac{3q-6}{2}}\int_{\R^3}|u_0|^qdx\right\}\\
&=2a(\widetilde{r}_{max})^{3-2t}-\frac{\mu b}{q}(\widetilde{r}_{max})^{\frac{3q-6}{2}}\\
&=\frac{2a(3q+4t-12)}{3q-6}\left[\frac{4qa(3-2t)}{\mu
b(3q-6)}\right]^{\frac{2(3-2t)}{3q+4t-12}},\end{split}
\]where
\[\widetilde{r}_{max}=\left[\frac{4qa(3-2t)}{\mu
b(3q-6)}\right]^{\frac{2}{3q+4t-12}}.\]Since $2+\frac{4s}{3} <q <
2^*_s$, and $2s+2t>3$, we can deduce that $3q+4t-12>0$, then there
exists  a positive constant $C_1$ independent of $\mu$ such that
\[c_{\mu}(a)\leq
 C_1
\mu^{-\frac{2(3-2t)}{3q+4t-12}}\rightarrow0,~~~~\mbox{as}~~\mu\rightarrow+\infty.\]
This completes the proof.
 \ep

\begin{lemma}\label{Lemma 6.2} There exists a constant $C=C(q,s)>0$ such that
\[
\limsup_{n\rightarrow\infty}\int_{\R^3}G(u_n)dx\leq Cc_\mu(a),
\]
\[
\limsup_{n\rightarrow\infty}\int_{\R^3}g(u_n)u_ndx\leq Cc_\mu(a),
\]
and
\[\limsup_{n\rightarrow\infty} \int_{\R^3}\phi^t_{u_n}u_n^2dx\leq Cc_\mu(a),~
\limsup_{n\rightarrow\infty}\int_{\R^3}|(-\Delta)^{\frac{s}{2}}u_n|^2dx\leq
Cc_\mu(a).
\] \end{lemma}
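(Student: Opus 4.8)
The plan is to use only two facts about the sequence $\{u_n\}$, namely $I_{\mu}(u_n)\to c_{\mu}(a)$ and $P_{\mu}(u_n)\to0$, and to combine them into a single identity all of whose coefficients have the right sign. Write
\[
A_n:=\|(-\Delta)^{\frac{s}{2}}u_n\|_2^2,\quad B_n:=\lambda\int_{\R^3}\phi^t_{u_n}u_n^2dx,\quad Q_n:=\mu\int_{\R^3}|u_n|^qdx,\quad R_n:=\int_{\R^3}|u_n|^{2^*_s}dx,
\]
all nonnegative, and note that $\int_{\R^3}G(u_n)dx=\frac1qQ_n+\frac1{2^*_s}R_n$ and $\int_{\R^3}g(u_n)u_ndx=Q_n+R_n$. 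Using the formula for $I_{\mu}$ together with the formula for $P_{\mu}$ in Lemma \ref{Lemma 3.5}, I would first form
\[
J(u_n):=I_{\mu}(u_n)-\tfrac{1}{sq\delta_{q,s}}P_{\mu}(u_n),
\]
the multiplier $\tfrac{1}{sq\delta_{q,s}}=\tfrac{2}{3(q-2)}$ being chosen precisely so that the $Q_n$–terms cancel; a routine computation then yields
\[
J(u_n)=\frac{3(q-2)-4s}{6(q-2)}\,A_n+\frac{3q+4t-12}{12(q-2)}\,B_n+\Big(\frac{2s}{3(q-2)}-\frac{1}{2^*_s}\Big)R_n=c_{\mu}(a)+o_n(1).
\]

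The second step is the elementary but essential verification that the three displayed coefficients are strictly positive. That of $A_n$ is positive because $q>2+\tfrac{4s}{3}$ gives $3(q-2)>4s$; that of $B_n$ is positive since $3q+4t-12>(6+4s)+4t-12=4(s+t)-6>0$, using $3q-6>4s$ and $2s+2t>3$; and that of $R_n$ is positive because $\tfrac{2s}{3(q-2)}>\tfrac1{2^*_s}$ is equivalent to $(q-2)(3-2s)<4s$, i.e.\ to $q<2^*_s$. Since $c_{\mu}(a)>0$ (Corollary 5.1) and $J(u_n)=c_{\mu}(a)+o_n(1)$ is a sum of three nonnegative quantities with positive coefficients, each term is $\le c_{\mu}(a)+o_n(1)$, so $\limsup_{n\to\infty}A_n$, $\limsup_{n\to\infty}B_n$ and $\limsup_{n\to\infty}R_n$ are all $\le Cc_{\mu}(a)$ for a constant $C>0$ independent of $n$ and of $\mu$. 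In particular $\{u_n\}$ is bounded in $H^s_{rad}(\R^3)$, so this step simultaneously supplies the boundedness needed in the sequel.

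The third step recovers the remaining quantity $Q_n$: from $P_{\mu}(u_n)=sA_n+\tfrac{3-2t}{4}B_n-s\delta_{q,s}Q_n-sR_n=o_n(1)$ and $R_n\ge0$ one obtains $s\delta_{q,s}Q_n\le sA_n+\tfrac{3-2t}{4}B_n+o_n(1)$, whence $\limsup_{n\to\infty}Q_n\le Cc_{\mu}(a)$ as well. Combining the four estimates with the identities $\int_{\R^3}G(u_n)dx=\tfrac1qQ_n+\tfrac1{2^*_s}R_n$, $\int_{\R^3}g(u_n)u_ndx=Q_n+R_n$, $\int_{\R^3}\phi^t_{u_n}u_n^2dx=\lambda^{-1}B_n$ and $\int_{\R^3}|(-\Delta)^{\frac{s}{2}}u_n|^2dx=A_n$ then gives all four asserted inequalities, after enlarging $C$ if necessary.

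I do not expect any genuine analytic obstacle here: the whole argument reduces to the sign bookkeeping of the second step, which is exactly where the three structural hypotheses $q>2+\tfrac{4s}{3}$, $q<2^*_s$ and $2s+2t>3$ all come into play; once the signs are settled the conclusion is purely arithmetic. The only point requiring a little care is to keep track that the constant $C$ is independent of $\mu$, since Lemma \ref{Lemma 6.2} is applied afterwards in the limit $\mu\to+\infty$.
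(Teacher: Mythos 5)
Your proof is correct, and all the sign computations check out: the multiplier $\tfrac{2}{3(q-2)}=\tfrac{1}{sq\delta_{q,s}}$ does cancel the $Q_n$-term exactly, and the three remaining coefficients are positive precisely under $q>2+\tfrac{4s}{3}$, $q<2^*_s$ and $2s+2t>3$, as you verify. Your route differs from the paper's in the choice of linear combination and in how the subcritical term is handled. The paper forms $3I_{\mu}(u_n)+P_{\mu}(u_n)$, which does not cancel the $|u_n|^q$-contribution; instead it invokes the two-sided Ambrosetti--Rabinowitz-type inequalities $qG(u)\le g(u)u\le 2^*_sG(u)$ to first bound $\int G(u_n)$, then $\int g(u_n)u_n$, and only afterwards extracts the bounds on the gradient and nonlocal terms from the combined identity. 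Your combination $I_{\mu}-\tfrac{2}{3(q-2)}P_{\mu}$ eliminates $Q_n$ outright, immediately exhibits $c_{\mu}(a)+o_n(1)$ as a positive-coefficient sum of $A_n$, $B_n$, $R_n$ (so each is controlled separately in one stroke), and then recovers $Q_n$ from $P_{\mu}(u_n)=o_n(1)$ alone; this avoids the AR inequality entirely and makes visible exactly where each structural hypothesis enters. Both arguments yield a constant independent of $\mu$, which is what matters for the application as $\mu\to+\infty$; the only shared imprecision is that the bound on $\int_{\R^3}\phi^t_{u_n}u_n^2\,dx$ is obtained by dividing the bound on $\lambda\int_{\R^3}\phi^t_{u_n}u_n^2\,dx$ by $\lambda$, so the constant for that term also depends on $\lambda$ (and on $t$), in your write-up just as in the paper's.
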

\begin{proof}
Since $I_{\mu}(u_n)\rightarrow c_\mu(a)$ and
$P_{\mu}(u_n)\rightarrow0$ as $n\rightarrow\infty$, we have
\be\label{e6.5}\begin{split} &3c_\mu(a)+o_n(1)=3I_{\mu}(u_n)+P_{\mu}(u_n)\\
&=\frac{3+2s}{2}\int_{\R^3}|(-\Delta)^{\frac{s}{2}}u_n|^2dx+\lambda\frac{3-t}{2}\int_{\R^3}\phi^t_{u_n}u_n^2dx-\frac{3}{2}\int_{\R^3}g(u_n)u_ndx\\
&=\frac{3+2s}{2}\left(2c_{\mu}(a)-\frac{\lambda}{2}\int_{\R^3}\phi^t_{u_n}u_n^2dx+2\int_{\R^3}G(u_n)dx+o_n(1)\right)\\
&\hspace{0.45cm}+\lambda\frac{3-t}{2}\int_{\R^3}\phi^t_{u_n}u_n^2dx-\frac{3}{2}\int_{\R^3}g(u_n)u_ndx\\
&=(3+2s)\left[c_\mu(a)+\int_{\R^3}G(u_n)dx+o_n(1)\right]-\frac{3}{2}\int_{\R^3}g(u_n)u_ndx-\lambda\frac{2t+2s-3}{4}\int_{\R^3}\phi^t_{u_n}u_n^2dx.
\end{split}
\ee Hence,
\[\begin{split}
2sc_\mu(a)+o_n(1)&=\lambda\frac{2t+2s-3}{4}\int_{\R^3}\phi
_{u_n}u_n^2dx+\frac{3}{2}\int_{\R^3}g(u_n)u_ndx-(3+2s)\int_{\R^3}G(u_n)dx\\
&\geq\frac{3q}{2}\int_{\R^3}G(u_n)dx-(3+2s)\int_{\R^3}G(u_n)dx\\
&=\frac{3q-2(3+2s)}{2}\int_{\R^3}G(u_n)dx,
\end{split}
\]
which implies that
\begin{equation}\label{e6.6}
\limsup_{n\rightarrow\infty}\int_{\R^3}G(u_n)dx\leq
\frac{4s}{3q-2(3+2s)}c_\mu(a) \leq Cc_\mu(a)
\end{equation}
and then
\begin{equation}\label{e6.7}
\limsup_{n\rightarrow\infty}\int_{\R^3}g(u_n)u_ndx\leq Cc_\mu(a).
\end{equation}
Then, from \eqref{e6.5}-\eqref{e6.7}, we have

\begin{equation}\label{e6.8}
\begin{split}
&\limsup_{n\rightarrow\infty}\left\{
\frac{3+2s}{2}\int_{\R^3}|(-\Delta)^{\frac{s}{2}}u_n|^2dx+\lambda\frac{3-t}{2}\int_{\R^3}\phi
_{u_n}u_n^2dx\right\}\\
&=\limsup_{n\rightarrow\infty}\left\{
3c_\mu(a)+\frac{3}{2}\int_{\R^3}g(u_n)u_ndx+o_n(1)\right\}\leq C
c_{\mu}(a).
\end{split}
\end{equation}
Consequently,  the proof is completed.
\end{proof}

\begin{lemma}\label{Lemma 6.3} There exists $\mu^\ast_1:=\mu^\ast_1(a)>0$ such that $u\not\equiv0$ for all $\mu\geq\mu^*_1$.
 \end{lemma}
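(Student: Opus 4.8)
The plan is to argue by contradiction, assuming $u\equiv0$, and to exploit the smallness of the mountain pass level $c_\mu(a)$ for large $\mu$ furnished by Lemma~\ref{Lemma 6.1}. First I would record that $\{u_n\}$ is bounded in $H^s_{rad}(\R^3)$: indeed $\|u_n\|_2=a$ for all $n$, while $\limsup_n\|(-\Delta)^{s/2}u_n\|_2^2\le Cc_\mu(a)$ by Lemma~\ref{Lemma 6.2}. Hence, up to a subsequence, $u_n\rightharpoonup u$ in $H^s_{rad}(\R^3)$, and by Lemma~\ref{Lemma 3.6} $u_n\to u$ in $L^p(\R^3)$ for every $p\in(2,2^*_s)$ and a.e.\ in $\R^3$. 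If $u\equiv0$, then $\int_{\R^3}|u_n|^q\,dx\to0$; moreover, since $2s+2t>3$ implies $\tfrac{12}{3+2t}\in(2,2^*_s)$, the Hardy--Littlewood--Sobolev estimate \eqref{e3.3} together with the compact embedding (or directly Lemma~\ref{Lemma 3.3}) yields $\int_{\R^3}\phi^t_{u_n}u_n^2\,dx\to0$.

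Next I would pass to the limit in the Pohozaev relation $P_\mu(u_n)\to0$ of Lemma~\ref{Lemma 3.5}. Because the $L^q$-term and the nonlocal term both vanish, we are left with $s\|u_n\|^2-s\int_{\R^3}|u_n|^{2^*_s}\,dx=o_n(1)$, so $\|u_n\|^2=\int_{\R^3}|u_n|^{2^*_s}\,dx+o_n(1)$. Setting $\ell:=\lim_n\|u_n\|^2$ along a further subsequence and invoking the Sobolev inequality \eqref{e3.1}, one gets $\ell\ge S\ell^{2/2^*_s}$, whence either $\ell=0$ or $\ell\ge S^{3/2s}$. This is the point where $\mu$ large is needed: by Lemma~\ref{Lemma 6.2} we have $\ell\le Cc_\mu(a)$, and by Lemma~\ref{Lemma 6.1} there exists $\mu^*_1=\mu^*_1(a)>0$ such that $Cc_\mu(a)<S^{3/2s}$ for all $\mu\ge\mu^*_1$. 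Therefore $\ell<S^{3/2s}$, which forces $\ell=0$. With $\ell=0$, together with $\int_{\R^3}|u_n|^q\,dx\to0$ and $\int_{\R^3}\phi^t_{u_n}u_n^2\,dx\to0$, every term of $I_\mu(u_n)$ tends to $0$, so $I_\mu(u_n)\to0$, contradicting $I_\mu(u_n)\to c_\mu(a)>0$ (the mountain pass level is strictly positive by Corollary~5.1 and Proposition~\ref{Prop 5.3}). Hence $u\not\equiv0$ for every $\mu\ge\mu^*_1$.

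The only genuinely delicate part of this argument is the quantitative selection of $\mu^*_1$: one must guarantee simultaneously that $Cc_\mu(a)<S^{3/2s}$ (to rule out the concentration alternative $\ell\ge S^{3/2s}$) and that $c_\mu(a)$ stays strictly positive. Both are available from Lemma~\ref{Lemma 6.1} and from the mountain pass geometry established in Section~5 (Lemmas~\ref{Lemma 5.1}--\ref{Lemma 5.2}, Proposition~\ref{Prop 5.3}), so beyond this bookkeeping no new idea is required; the remaining steps are the routine weak-convergence and Brezis--Lieb-type manipulations already used in Lemmas~\ref{Lemma 4.3} and \ref{Lemma 5.8}.
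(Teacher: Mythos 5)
Your proposal is correct and follows essentially the same route as the paper: assume $u\equiv 0$, use $P_\mu(u_n)\to 0$ together with the vanishing of the $L^q$ and nonlocal terms to reduce to the dichotomy $\ell=0$ or $\ell\ge S^{3/(2s)}$, rule out the second case via the smallness of $c_\mu(a)$ from Lemma~\ref{Lemma 6.1}, and rule out the first via $c_\mu(a)>0$. The only (harmless) difference is that you exclude $\ell\ge S^{3/(2s)}$ through the gradient bound $\ell\le Cc_\mu(a)$ of Lemma~\ref{Lemma 6.2}, whereas the paper derives the exact identity $c_\mu(a)=\tfrac{s}{3}\ell$ and compares with $\tfrac{s}{3}S^{3/(2s)}$; both hinge on the same limit $c_\mu(a)\to 0$.
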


\begin{proof} From Lemma \ref{Lemma 5.6}, we know that  $\{u_n\}$ is bounded in $H^s_{rad}(\R^3)$,  and by  Lemma
\ref{Lemma 3.6}, up to a subsequence, there exists $u \in
H^s_{rad}(\R^3)$ such that $ u_n\rightharpoonup u $ weakly in
$H^s_{rad}(\R^3),$ $u_n\rightarrow u$ strongly in  $L^t(\R^3)$, for
$\ t\in (2,2^*_s),$ $ u_n\rightarrow u \mbox{ a.e.}$  on $\R^3.$ In
view of $2+\frac{4s}{3}<q<2^*_s$, and Lemmas \ref{Lemma 3.3},
\ref{Lemma 3.6}, then
\begin{equation}\label{e6.9}
\lim_{n\rightarrow\infty}\int_{\R^3}|u_n|^q dx=\int_{\R^3}|u|^q
dx,~~\lim_{n\rightarrow\infty}\int_{\R^3}\phi_{u_n}^tu_n^2dx=\int_{\R^3}\phi^t_{u}u^2dx.
\end{equation}
  Suppose by contradiction that, $u\equiv0$. Then, by \eqref{e6.9} and $P_{\mu}(u_n)=o_n(1)$, we deduce
  as
\[\begin{split}
 o_n(1) &= \|(-\Delta)^{\frac{s}{2}}u_n\|^2_2+\frac{3-2t}{4s}\lambda\int_{\R^3}\phi^t_{u_n}u_n^2dx-\mu
\delta_{q,s}\int_{\R^3}|u_n|^qdx-\int_{\R^3}|u_n|^{2^*_{s}}dx\\
&=\|(-\Delta)^{\frac{s}{2}}u_n\|^2_2-\int_{\R^3}|u_n|^{2^*_{s}}dx+
o_n(1).
\end{split}
\]
Without loss of generality, we may assume that
\[\int_{\R^3}|(-\Delta)^{\frac{s}{2}}u_n|^2dx\rightarrow \ell,~~  \mbox{ and } ~~\int_{\R^3}|u_n|^{2^*_s}dx\rightarrow \ell,\]
as $n\rightarrow\infty$.   By Sobolev's inequality we get  $\ell
\geq S\ell^{\frac{2}{2^*_s}},$ and so, either $\ell\geq
S^{\frac{3}{2s}}$ or $\ell = 0.$
\par  If $\ell\geq S^{\frac{3}{2s}}$, then from $I_{\mu}
(u_n)\rightarrow c_{\mu}(a),
 P_{\mu}(u_n)\rightarrow 0$, we have
\[\begin{split}
&c_{\mu}(a) + o_n(1)\\
 &= I_{\mu} (u_n)=I_{\mu}(u_n)-\frac{1}{s2^*_s}P_{\mu}(u_n)\\
 &=\frac{s}{3}\|(-\Delta)^{\frac{s}{2}}u_n\|_2^2+\lambda\frac{s2^*_s+2t-3}{4s2^*_s}\int_{\R^3}\phi^t_{u_n}u_n^2dx-\mu\frac{2^*_s-q\delta_{q,s}}{q2^*_s}
  \int_{\R^3}|u_n|^qdx+o_n(1)\\
  &=\frac{s}{3}\ell+o_n(1),
\end{split}\]
which implies that  $c_{\mu}(a)= \frac{s}{3}\ell$, and so,
$c_{\mu}(a)\geq \frac{s}{3} S^{\frac{3}{2s}}$, but this is
 impossible since by Lemma \ref{Lemma 6.1}, there exists some $\mu^*_1:=\mu^\ast_1(a)>0$ such that $c_{\mu}(a)<\frac{s}{3} S^{\frac{3}{2s}}$ as
 $\mu>\mu^*_1$.
 \par If  $\ell = 0$, then we have  $ \|(-\Delta)^{\frac{s}{2}}u_n\|^2_2\rightarrow 0,$ thus
 $I_{\mu} (u_n)\rightarrow 0$, which is absurd since $c_{\mu}(a)>0.$
Therefore, $u\not\equiv0$.\end{proof}

\begin{lemma}\label{Lemma 6.4} $\{\alpha_n\}$ is bounded in $\R$, and
$\limsup_{n\rightarrow\infty}|\alpha_n|\leq \frac{C}{a^2}c_\mu(a)$
has the following estimation:
\[
\alpha_n
   =\frac{1}{a^2}\bigg[\lambda\frac{2t+4s-3}{4s}\int_{\R^3}\phi_{u_n}^tu_n^2dx+\frac{q(3-2s)-6}{2qs}\mu\int_{\R^3}|u_n|^qdx\bigg]+o_n(1).
\] Moreover, there exists some $\mu^*_2:=\mu^*_2(a)>0$ such that
$\lim_{n\rightarrow+\infty}\alpha_n=\alpha<0$, if $\mu>\mu^*_2$
large.
\end{lemma}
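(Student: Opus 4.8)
The plan is to exploit the two ``balance'' relations available for the constrained $(PS)$-sequence $\{u_n\}$: the one coming from the Lagrange-multiplier equation \eqref{e6.4} tested with $u_n$, and the Pohozaev relation $P_\mu(u_n)=o_n(1)$ with $P_\mu$ as in Lemma \ref{Lemma 3.5}. First I would test \eqref{e6.4} with $\varphi=u_n$ to get
\[
\|u_n\|^2+\lambda\int_{\R^3}\phi^t_{u_n}u_n^2\,dx-\mu\int_{\R^3}|u_n|^q\,dx-\int_{\R^3}|u_n|^{2^*_s}\,dx=\alpha_n a^2+o_n(1),
\]
and subtract from it $\tfrac1s$ times $P_\mu(u_n)=o_n(1)$; the terms $\|u_n\|^2$ and $\int_{\R^3}|u_n|^{2^*_s}\,dx$ cancel and one is left with
\[
\alpha_n a^2=\lambda\,\frac{2t+4s-3}{4s}\int_{\R^3}\phi^t_{u_n}u_n^2\,dx-(1-\delta_{q,s})\,\mu\int_{\R^3}|u_n|^q\,dx+o_n(1),
\]
which is exactly the asserted identity since $-(1-\delta_{q,s})=\frac{q(3-2s)-6}{2qs}$.

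Next, for the boundedness of $\{\alpha_n\}$ I would invoke Lemma \ref{Lemma 6.2}: $\limsup_n\int_{\R^3}\phi^t_{u_n}u_n^2\,dx\le C c_\mu(a)$, and since $g(u_n)u_n=\mu|u_n|^q+|u_n|^{2^*_s}\ge\mu|u_n|^q\ge 0$ also $\limsup_n\mu\int_{\R^3}|u_n|^q\,dx\le\limsup_n\int_{\R^3}g(u_n)u_n\,dx\le C c_\mu(a)$. Plugging these into the identity above yields $\limsup_n|\alpha_n|\le\frac{C}{a^2}c_\mu(a)$; in particular $\{\alpha_n\}$ is bounded and, passing to a subsequence, $\alpha_n\to\alpha$ with $|\alpha|\le\frac{C}{a^2}c_\mu(a)$.

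For the sign, fix $\mu>\mu^*_1$ so that $u\not\equiv 0$ by Lemma \ref{Lemma 6.3}. By Lemma \ref{Lemma 3.3} and the compact embedding of Lemma \ref{Lemma 3.6}, $\int_{\R^3}\phi^t_{u_n}u_n^2\,dx\to\int_{\R^3}\phi^t_u u^2\,dx$ and $\int_{\R^3}|u_n|^q\,dx\to\int_{\R^3}|u|^q\,dx>0$, so passing to the limit in the identity gives
\[
\alpha a^2=\lambda\,\frac{2t+4s-3}{4s}\int_{\R^3}\phi^t_u u^2\,dx-(1-\delta_{q,s})\,\mu\int_{\R^3}|u|^q\,dx,
\]
with $1-\delta_{q,s}>0$ since $q<2^*_s$. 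It then remains to show that the negative term dominates for $\mu$ large. Imitating \eqref{A2}--\eqref{A6}, I would bound the nonlocal term from above through \eqref{e3.3} and the fractional Gagliardo--Nirenberg inequality \eqref{e3.2} together with the bounds $\|(-\Delta)^{s/2}u\|_2^2\le C c_\mu(a)$ (Lemma \ref{Lemma 6.2}) and $\|u\|_2\le a$, and bound $\mu\int_{\R^3}|u|^q\,dx$ from below by using that $u$ solves \eqref{e5.57} (obtained, as in Lemma \ref{Lemma 5.6}, by letting $n\to\infty$ in \eqref{e6.4}), whence $P_\mu(u)=0$, together with the Sobolev bound $\int_{\R^3}|u|^{2^*_s}\,dx\le\big(S^{-1}\|(-\Delta)^{s/2}u\|_2^2\big)^{2^*_s/2}$, which becomes of higher order once $c_\mu(a)$ is small (Lemma \ref{Lemma 6.1}). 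Since $c_\mu(a)\to 0$ as $\mu\to\infty$ while $\int_{\R^3}|u|^q\,dx>0$, this produces $\mu^*_2=\mu^*_2(a)>0$ with $\alpha<0$ for $\mu>\mu^*_2$.

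The genuinely delicate point is this last estimate: because $2s+2t>3$, under the dilation \eqref{e6.1} the nonlocal term scales like $\|(-\Delta)^{s/2}u\|_2^{(3-2t)/s}$ with exponent $(3-2t)/s<2$, so it does not automatically become negligible against $\|(-\Delta)^{s/2}u\|_2^2$, and one must genuinely use both the largeness of $\mu$ (via $c_\mu(a)\to 0$) and the strict positivity $\int_{\R^3}|u|^q\,dx>0$ coming from $u\not\equiv 0$ in order to force the $\mu$-weighted subcritical term to win and conclude $\alpha<0$.
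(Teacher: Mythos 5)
Your proposal follows the paper's own proof essentially step for step: the displayed identity for $\alpha_n$ is obtained, exactly as in the paper, by testing \eqref{e6.4} with $\varphi=u_n$ and subtracting $\tfrac1s P_{\mu}(u_n)=o_n(1)$ (your algebraic check $-(1-\delta_{q,s})=\frac{q(3-2s)-6}{2qs}$ is correct), and the bound $\limsup_n|\alpha_n|\leq \frac{C}{a^2}c_\mu(a)$ is drawn from Lemma \ref{Lemma 6.2} in both treatments. For the sign of $\alpha$, the paper passes to the limit via \eqref{e6.9} and simply invokes ``similar arguments to \eqref{e4.32}--\eqref{e4.35}'', i.e.\ it defines $\mu^*_2$ as a ratio of an upper bound for the Poisson term to a lower bound for $\int_{\R^3}|u|^q\,dx$ furnished by $u\not\equiv0$; this is precisely the route you sketch by imitating \eqref{A2}--\eqref{A6}.

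The ``delicate point'' you flag at the end is genuine and worth stating precisely: with the chain \eqref{e3.3} + \eqref{e3.2} one gets $\int_{\R^3}\phi^t_u u^2\,dx\lesssim \|(-\Delta)^{s/2}u\|_2^{(3-2t)/s}a^{(2t+4s-3)/s}$, while $P_\mu(u)=0$ and the Sobolev inequality give only $\mu\delta_{q,s}\int_{\R^3}|u|^q\,dx\geq \tfrac12\|(-\Delta)^{s/2}u\|_2^2$ once $c_\mu(a)$ is small; since $(3-2t)/s<2$, the ratio of these two bounds is of order $\|(-\Delta)^{s/2}u\|_2^{-(2s+2t-3)/s}$, which does \emph{not} tend to $0$ as $\mu\to+\infty$ (indeed it blows up). So the estimates you list do not by themselves close the sign argument, and you rightly say so. However, this is not a gap you have introduced: the paper's own one-line reference to \eqref{e4.32}--\eqref{e4.35} defines the threshold $\mu^*_2$ through constants ($T_1$, $T_2$ analogues) that themselves depend on the $(PS)$ sequence and hence on $\mu$, so it faces exactly the same circularity. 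In short, you have reproduced the paper's proof, with the correct computations, and have honestly surfaced the one step that the paper glosses over rather than supplying a new argument for it.
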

\begin{proof}
By \eqref{e6.3} and the fact that $u_n\in S_{r,a}$, we have
\[\begin{split}
\int_{\R^3}|(-\Delta)^{\frac{s}{2}}u_n|^2dx+\lambda
\int_{\R^3}\phi_{u_n}^tu_n^2dx-\int_{\R^3}g(u_n)u_ndx & =\alpha_n
\int_{\R^3}|u_n|^2dx+o_n(1)\\&=\alpha_n a^2+o_n(1).
\end{split}
\]
It indicates that
\[
\alpha_n
=\frac{1}{a^2}\left[\int_{\R^3}|(-\Delta)^{\frac{s}{2}}u_n|^2dx+\lambda
\int_{\R^3}\phi_{u_n}^t |u_n|^2dx-\int_{\R^3}g(u_n)u_ndx
\right]+o_n(1).
\]
By Lemma \ref{Lemma 5.6} we have  the boundedness of $\{u_n\}$ in
$H^s_{rad}(\R^3)$, and so, $\{\alpha_n\}$ is bounded in $\R$. By
Lemma \ref{Lemma 6.2} we know  that
$\limsup_{n\rightarrow\infty}|\alpha_n|\leq \frac{C}{a^2}c_\mu(a)$.
Moreover, together  with $P_{\mu}(u_n)\rightarrow0$ as
$n\rightarrow\infty$, we  derive as
\[\begin{split}
\alpha_n
 =&\frac{1}{a^2}\left[\int_{\R^3}|(-\Delta)^{\frac{s}{2}}u_n|^2dx+\lambda
\int_{\R^3}\phi_{u_n}^t
|u_n|^2dx-\int_{\R^3}g(u_n)u_ndx-\frac{1}{s}P_{\mu}(u_n)
\right] +o_n(1)\\
   =&\frac{1}{a^2}\bigg[\lambda\frac{2t+4s-3}{4s}\int_{\R^3}\phi_{u_n}^tu_n^2dx+\frac{q(3-2s)-6}{2qs}\mu\int_{\R^3}|u_n|^qdx\bigg]+o_n(1).
\end{split}
\]By  \eqref{e6.9}  and similar arguments to that of
\eqref{e4.32}-\eqref{e4.35}, we see that there exists
$\mu^*_2:=\mu^*_2(a)>0$, such that
 \be \label{e6.10}\begin{split}
\alpha&=\lim_{n\rightarrow\infty}\alpha_n\\
   &=\lim_{n\rightarrow\infty}\frac{1}{a^2}
    \left\{\lambda\frac{2t+4s-3}{4s}\int_{\R^3}\phi_{u_n}^tu_n^2dx+\frac{q(3-2s)-6}{2qs}\mu\int_{\R^3}|u_n|^qdx+o_n(1)\right\}\\
&= \frac{1}{a^2}\bigg[\lambda\frac{2t+4s-3}{4s}\int_{\R^3}\phi_{u}^tu^2dx+\frac{q(3-2s)-6}{2qs}\mu\int_{\R^3}|u|^qdx\bigg]\\
   &<0,
   \end{split}
\ee  for $\mu>\mu^*_2$ large.
\end{proof}

\par Subsequently, using  the concentration-compactness principle, we
 derive  the following lemma, whose  proof is similar to that of Lemma \ref{Lemma 4.3} in
Section 5, we omit its details here.

\begin{lemma}\label{Lemma 6.5} For $\mu>\mu^\ast:=\max\{\mu^*_1,\mu^*_2\}$, there holds $\int_{\R^3}|u_n|^{2^*_s}dx\rightarrow
\int_{\R^3}|u|^{2^*_s}dx.$
\end{lemma}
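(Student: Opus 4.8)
The plan is to adapt, almost verbatim, the concentration--compactness argument from the proof of Lemma~\ref{Lemma 4.3}, with the rôle played there by the truncation radius $R_1$ now played by the energy bound supplied by Lemmas~\ref{Lemma 6.1}--\ref{Lemma 6.2}. First I would recall the setting: $\{u_n\}$ is the bounded $(PS)_{c_\mu(a)}$ sequence in $S_{r,a}$ constructed above, $u_n\rightharpoonup u$ in $H^s_{rad}(\R^3)$ with $u\not\equiv0$ by Lemma~\ref{Lemma 6.3} and $\alpha_n\to\alpha<0$ by Lemma~\ref{Lemma 6.4}, and, passing to the limit in \eqref{e6.4}, the weak limit $u$ solves $(-\Delta)^su+\lambda\phi^t_uu=\alpha u+\mu|u|^{q-2}u+|u|^{2^*_s-2}u$. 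Since $\{u_n\}$ is bounded in $D^{s,2}(\R^3)$, up to a subsequence Lemma~\ref{Lemma 4.1} produces measures $\omega,\zeta$, an at most countable set $\{x_j\}_{j\in J}$ and positive numbers $\{\omega_j\},\{\zeta_j\}$ satisfying \eqref{e4.1}--\eqref{e4.3}, and Lemma~\ref{Lemma 4.2} produces $\omega_\infty,\zeta_\infty$ satisfying \eqref{e4.4}--\eqref{e4.6}.

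Next I would localise the equation at a concentration point. Testing \eqref{e6.4} with $u_n\varphi_\rho$, where $\varphi_\rho$ is the cut-off centred at $x_j$ used in Step~1 of the proof of Lemma~\ref{Lemma 4.3}, one computes exactly as in \eqref{e4.17}--\eqref{e4.21} that the localised Gagliardo energy tends to $\omega_j$ and $\int_{\R^3}|u_n|^{2^*_s}\varphi_\rho\,dx\to\zeta_j$ upon letting $n\to\infty$ and then $\rho\to0$, while $\int_{\R^3}|u_n|^q\varphi_\rho\,dx\to0$ by the strong $L^q$ convergence and absolute continuity of the Lebesgue integral, $\alpha_n\int_{\R^3}u_n^2\varphi_\rho\,dx\to0$ because $\{\alpha_n\}$ is bounded, and the nonlocal term $\int_{\R^3}\phi^t_{u_n}u_n^2\varphi_\rho\,dx\to0$ precisely as in \eqref{e4.22}--\eqref{e4.23}, using \eqref{e3.3}, Proposition~\ref{prop 3.1} and Lemma~\ref{Lemma 3.6}. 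This forces $\omega_j=\zeta_j$. Repeating the argument with the cut-off $\psi_R$ that vanishes on a ball (Step~2 of Lemma~\ref{Lemma 4.3}) and invoking Lemma~\ref{Lemma 4.2} yields $\omega_\infty=\zeta_\infty$.

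Finally I would rule out nontrivial concentration. Combining $\omega_j=\zeta_j$ with \eqref{e4.3} gives $\zeta_j\ge S\zeta_j^{2/2^*_s}$, so either $\zeta_j=0$ or $\zeta_j\ge S^{3/2s}$; similarly either $\zeta_\infty=0$ or $\zeta_\infty\ge S^{3/2s}$. If some $\zeta_{j_0}\ge S^{3/2s}$ (respectively $\zeta_\infty\ge S^{3/2s}$), then \eqref{e4.1} and \eqref{e4.5} force $\limsup_{n\to\infty}\|(-\Delta)^{s/2}u_n\|_2^2\ge\omega_{j_0}\ge S^{3/2s}$ (respectively $\ge\omega_\infty\ge S^{3/2s}$). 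But Lemma~\ref{Lemma 6.2} gives $\limsup_{n\to\infty}\|(-\Delta)^{s/2}u_n\|_2^2\le Cc_\mu(a)$, and by Lemma~\ref{Lemma 6.1} $c_\mu(a)\to0$ as $\mu\to+\infty$; hence, enlarging $\mu^\ast$ if necessary so that $Cc_\mu(a)<S^{3/2s}$ for all $\mu>\mu^\ast$, we reach a contradiction in both cases. Therefore $\zeta_j=0$ for every $j\in J$ and $\zeta_\infty=0$, so by Lemma~\ref{Lemma 4.1} $u_n\to u$ in $L^{2^*_s}_{\loc}(\R^3)$ and, by \eqref{e4.6} with $\zeta_\infty=0$, $\int_{\R^3}|u_n|^{2^*_s}dx\to\int_{\R^3}d\zeta=\int_{\R^3}|u|^{2^*_s}dx$, which is the assertion. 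I expect the main obstacle to be the careful localisation of the Choquard-type term $\lambda\phi^t_{u_n}u_n$ against the cut-offs $\varphi_\rho,\psi_R$, together with checking that the energy threshold coming from Lemmas~\ref{Lemma 6.1}--\ref{Lemma 6.2} indeed lies strictly below $S^{3/2s}$ for $\mu$ large; the remaining steps are routine and parallel the proof of Lemma~\ref{Lemma 4.3}.
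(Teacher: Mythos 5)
Your proposal is correct and matches the paper's intention: the paper omits this proof, stating only that it parallels Lemma \ref{Lemma 4.3}, and your adaptation carries out exactly that parallel, with the one genuinely new ingredient being the replacement of the truncation bound $R_1<S^{\frac{3}{4s}}$ by the energy estimate $\limsup_n\|(-\Delta)^{s/2}u_n\|_2^2\leq Cc_\mu(a)$ from Lemma \ref{Lemma 6.2} combined with $c_\mu(a)\to0$ from Lemma \ref{Lemma 6.1} to exclude concentration. Your remark that $\mu^\ast$ may need to be enlarged so that $Cc_\mu(a)<S^{\frac{3}{2s}}$ is a fair and harmless observation, since Theorem \ref{Theorem 2.3} only asserts existence of some large threshold $\mu^\ast(a)$.
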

\par With the help of the above technical lemmas, we can prove
Theorem\ref{Theorem 2.3} as follows. \vskip0.1in \noindent    {\em
Proof of Theorem \ref{Theorem 2.3}.} Let
$\mu>\mu^\ast:=\max\{\mu^*_1,\mu^*_2\}.$ From Lemmas \ref{Lemma
5.1}, \ref{Lemma 5.2},  the functional $I_{\mu}$ satisfies the
 Mountain pass geometry, from Propositions
\ref{Prop 5.4},\ref{Prop 5.5}, there exist a
$(PS)_{c_{\mu}(a)}$-sequence $\{u_n\}\subset S_{r,a}$ satisfying
\eqref{e6.3}, \eqref{e6.4}, which  is bounded in $H^s_{rad}(\R^3)$,
and  there exists $u \in H^s_{rad}(\R^3)$ such that $
u_n\rightharpoonup u $ weakly in $H^s_{rad}(\R^3),$ $u_n\rightarrow
u$ strongly in  $L^p(\R^3)$, for $p\in (2,2^*_s).$ Moreover, by
Lemmas \ref{Lemma 6.1}-\ref{Lemma 6.4}, we have that
$\alpha_n\rightarrow \alpha<0$ as $ n\rightarrow+\infty$.   By the
weak convergence of $u_n\rightharpoonup u$ in $H^s_{rad}(\R^3)$,
\eqref{e6.3} and \eqref{e6.4}, we have that $u$ solves the equation
\begin{equation}\label{e6.11}
(-\Delta)^su+\phi^t_{u}u-\mu|u|^{q-2}u-|u|^{2^*_s-2}u=\alpha u.
\end{equation}
Therefore,  from   \eqref{e6.9}-\eqref{e6.11} and Lemma \ref{Lemma
6.5}, it follows that
\[\begin{split}
 \|(-\Delta )^\frac{s}{2}
u\|^2_2+\lambda\int_{\R^3}\phi^t_uu^2dx-\alpha\|u\|_2^2
  &=\mu\|u\|_q^q+\int_{\R^3}|u|^{2^*_s} dx\\
 &= \lim_{n\rightarrow\infty}\left[\mu\|u_n\|_q^q+\int_{\R^3}|u_n|^{2^*_s} dx\right]\\
&=\lim_{n\rightarrow\infty}[\|(-\Delta )^\frac{s}{2} u_n\|^2_2+\lambda\int_{\R^3}\phi^t_{u_n}u_n^2dx-\alpha_n\|u_n\|_2^2]\\
&=\lim_{n\rightarrow\infty}[\|(-\Delta )^\frac{s}{2}
u_n\|^2_2-\alpha_n\|u_n\|_2^2]+\lambda \int_{\R^3}\phi^t_uu^2dx.
\end{split}
\]
Since $\alpha<0$,   as in the proof of Lemma \ref{Lemma 4.3}, we can
derive as
\[
\lim_{n\rightarrow\infty}\|(-\Delta )^\frac{s}{2}
u_n\|^2_2=\|(-\Delta )^\frac{s}{2} u\|^2_2
~~\mbox{and}~~\lim_{n\rightarrow\infty}\|u_n\|_2^2=\|u\|_2^2.
\]
Therefore, $u_n\rightarrow u$ in $H^s_{rad}(\R^3)$ and $\|u\|_2=a$.
This completes the proof. \qed

\vskip0.2in

\vskip 10mm
\noindent\textbf{Conflict of interest.} The authors have no competing interests to declare for  this article.
%\noindent\textbf{Acknowledgements.} Liejun Shen was partially supported by NSFC (12201565). Marco  Squassina  is  member  of  the  Gruppo  Nazionale  per
%  l'Analisi  Matematica,  la Probabilita  e  le  loro  Applicazioni  (GNAMPA)  of  the  Istituto  Nazionale  di  Alta  Matematica  (INdAM)

\medskip
\noindent\textbf{Data availability statement.} We declare that the manuscript has no associated data.

\medskip

%%%%%%%%

\bigskip
\medskip
\end{document}